\theoremstyle{plain}
\newtheorem{thm}{Theorem}[section]
\newtheorem{prop}[thm]{Proposition}
\newtheorem{lem}[thm]{Lemma}
\theoremstyle{definition}
\newtheorem*{defn}{Definition}
\newtheorem*{notaz}{Notation}
\newtheorem{exa}[thm]{Example}
\theoremstyle{remark}
\newtheorem*{remark}{Remark}
\newcommand{\Sh}{\mathrm{Sh}}
\newcommand{\Fer}{\mathrm{Fer}}
\newcommand{\eqdef}{\stackrel{\rm def}{=}}
\newcommand{\St}{\mathcal {ST}}
\newcommand{\qc}{G(r,p,q,n)}
\newcommand{\Irr}{\mathrm{Irr}}
\newcommand{\tr}{\mathrm{tr}}
\newcommand{\GCD}{\mathrm{GCD}}
\newcommand{\Inv}{\mathrm{Inv}}
\newcommand{\Id}{\mathrm{Id}}
\newcommand{\inv}{\mathrm{inv}}
\newcommand{\Pair}{\mathrm{Pair}}
\newcommand{\pair}{\mathrm{pair}}
\newcommand{\Supp}{\mathrm{Supp}}
\newcommand{\tho}{\mathrm{th}}
\newcommand{\sign}{\mathrm{sign}}
\newcommand{\Ind}{\mathrm{Ind}}
\newcommand{\GL}{\mathrm{GL}}
\newcommand{\Stab}{\mathrm{Stab}}
\newcommand{\stab}{\mathrm{stab}}
\author{Fabrizio Caselli}\author {Roberta Fulci}
\title{Gelfand models and Robinson-Schensted correspondence}
\begin{document}

\begin{abstract}In [F. Caselli, Involutory reflection groups and their models, J. Algebra 24 (2010), 370--393] there is constructed a uniform Gelfand model for all non-exceptional irreducible complex reflection groups which are involutory. Such model can be naturally decomposed into the direct sum of submodules indexed by $S_n$-conjugacy classes, and we present here a general result that relates the irreducible decomposition of these submodules with the projective Robinson-Schensted correspondence. This description also reflects in a very explicit way the existence of split representations for these groups.
\end{abstract}
\maketitle
\section[intro]{Introduction}

Given a finite dimensional vector space $V$ on the complex field, $G<GL(V)$ is a reflection group if it is generated by reflections, i.e. elements of finite order fixing a hyperplane of $V$ pointwise. Finite irreducible complex reflection groups were completely classified in the fifties \cite{ST} by Shephard and Todd. They consist of an infinite family $G(r,p,n)$, where $r,p,n\in \mathbb{N}$ and  $p|r$, which are the main subject of this paper, and 34 more sporadic groups.

This work finds its roots in the introduction of a new family of groups, called \textit{projective reflection groups} \cite{Ca1}. They can be roughly described as quotients - modulo a scalar group - of finite reflection groups. If we quotient a group $G(r,p,n)$ modulo the cyclic scalar subgroup $C_q$, we find a new group $G(r,p,q,n)$, so that in this notation we have $G(r,p,n)=G(r,p,1,n)$. We define the \textit{dual group}  $G(r,p,q,n)^*$ as the group $G(r,q,p,n)$ obtained by simply exchanging the parameters $p$ and $q$. It turns out that many objects related to the algebraic structure of a projective reflection group $G$ can be naturally described by means of the combinatorics of its dual $G^*$ (see \cite{Ca1,Ca2}). For example, its representations.

A Gelfand model of a finite group $G$ is a $G$-module isomorphic to the multiplicity-free sum of all the irreducible complex representations of $G$. The study of Gelfand models originated from \cite{BGG} and has found a wide interest in the case of reflection groups and other related groups (see, e.g.,\cite{K2,IRS, B,APR}). 
In \cite{Ca2}, a Gelfand model $(M,\varrho)$ was constructed (relying on the concept of duality in an essential way) for every \textit{involutory} projective reflection group $G(r,p,q,n)$. 

A finite subgroup of $GL(V)$ is involutory if the number of its absolute involutions, i.e. elements $g$ such that $g\bar g=1$, coincides with the dimension of its Gelfand model.
A group $G(r,p,n)$ turns out to be involutory if and only if $\GCD(p,n)=1,2$ \cite[Theorem 4.5]{Ca2}. 

The model $(M.\varrho)$ provided in \cite{Ca2} looks like this:
\begin{itemize}
 \item $M$ is a formal vector space spanned by all absolute involutions $I(r,p,n)^*$ of the dual group  $G(r,p,n)^*$: $$M\eqdef \bigoplus_{v\in I(r,p,n)^*}\mathbb C C_v;$$
\item $\varrho:G(r,p,n)\rightarrow GL(M)$ works, basically, as an \textit{absolute conjugation }of $G(r,p,n)$ on the elements indexing the basis of $M$:
\begin{equation}\label{basically}
\varrho(g) (C_v) \eqdef \psi(g,v) C_{|g|v|g|^{-1}},
 \end{equation}
$\psi(g,v)$ being a scalar and $|g|$ being the natural projection of $g$ in the symmetric group $S_n$.
\end{itemize}

If $g, h \in G(r,p,n)^*$ we say that $g$ and $h$ are \emph{$S_n$-conjugate} if there
exists $\sigma \in S_n$ such that $g=\sigma h\sigma^{-1}$, and we call \emph{$S_n$-conjugacy classes}  the corresponding equivalence classes.
If $c$ is a $S_n$-conjugacy class of absolute involutions in $I(r,p,n)^*$
we denote by $M(c)$ the subspace of $M$ spanned by the basis elements $C_v$ indexed by the absolute involutions
$v$ belonging to the class $c$. Then it is clear from \eqref{basically} that we have a decomposition
$$
M=\bigoplus_c M(c)\,\,\,\textrm{ as $G(r,p,n)$-modules,}
$$
where the sum runs through all $S_n$-conjugacy classes of absolute
involutions in $I(r,p,n)^*$. It is natural to ask if we can describe the
irreducible decomposition of the submodules $M(c)$, and the main goal of this paper is to answer to this question for every group $G(r,p,n)$ with $\GCD(p,n)=1,2$. The special case of this result for the symmetric group $S_n=G(1,n)$ was established in \cite{IRS}, while the corresponding result for wreath products $G(r,1,n)$ has been recently proved by the present authors in \cite{CaFu}. Though the main result of this paper is a generalization of \cite{CaFu}, we should mention that the proof is not, in the sense that we will actually make use here of the main results of \cite{CaFu}.

The decomposition of the submodules $M(c)$ in this wider setting is much more subtle. Indeed, when $\GCD(p,n)=2$, the Gelfand model $M$  splits also in a different way as the direct sum of two distinguished modules: the symmetric submodule $M^0$, which is spanned by the elements $C_v$ indexed by symmetric absolute involutions, and the antisymmetric submodule $M^1$, which is defined similarly. This decomposition is compatible with the one described above in the sense that every submodule $M(c)$ is contained either in the symmetric  or in the antisymmetric submodule. The existence of the antisymmetric submodule and of the submodules $M(c)$ contained therein  will reflect in a very precise way the existence of split representations for these groups. 
The study of the irreducible decomposition of $M(c)$, when $c$ is made up of antisymmetric elements, requires a particular machinery developed in Sections \ref{DFT}, \ref{Asym} and \ref{anclas} that was not needed in the case of wreath products $G(r,n)$, where the antisymmetric submodule vanishes and so the Gelfand model coincides with its symmetric submodule.

Here is a plan of this paper. In Section \ref{not} we collect the background of preliminary results that are needed to afford the topic. Here an introduction to the groups $G(r,p,q,n)$ can be found, as well as the description of its irreducible representations, and a brief account of the projective Robinson-Schensted correspondence. In Section \ref{model}, for the reader's convenience, we recall the important definition of symmetric and antisymmetric elements given in \cite{Ca2} and the Gelfand model constructed therein. Also a brief account of the main result for the case of $G(r,n)$ can be found here. Section \ref{Dn} consists of an outline of the proof of the main results of this work for the special case of Weyl groups of type $D$. Afterwards, the more general case of all involutory groups of the form $G(r,p,n)$ is treated in full detail. Section \ref{splitclasses} is devoted to the description of the conjugacy classes of such groups. In Section \ref{DFT} we study the discrete Fourier transform, a tool which will be used later in Section \ref{Asym}, where the irreducible decomposition of the antisymmetric submodule  is treated. Section \ref{anclas} then provides an explicit description of the irreducible decomposition of the modules $M(c)$ contained in the antisymmetric submodule. Section \ref{Sym} affords the irreducible decomposition of the submodule $M(c)$, where $c$ is any $S_n$-conjugacy class of symmetric absolute involutions, and Section \ref{last} contains a general result, Theorem \ref{verymain}, that includes all partial results of the previous sections as well as a further slight generalization to all groups $G(r,p,q,n)$ satisfying $\GCD(p,n)=1,2$.

\section[not]{Notation and prerequisites}\label{not}

We let $\mathbb Z $ be the set of integer numbers and $\mathbb N$ be the set of nonnegative integer numbers. For $a,b\in \mathbb Z$, with $a\leq b$ we let $[a,b]=\{a,a+1,\ldots,b\}$ and, for $n\in \mathbb N$ we let $[n]\eqdef[1,n]$. For $r\in\mathbb N$, $r>0$, we let $\mathbb Z_r\eqdef \mathbb Z /r\mathbb Z$. and we denote by $\zeta_r$ the primitive $r$-th root of unity $\zeta_r\eqdef e^{\frac{2\pi i}{r}}$.

The group $G(r,n)$ consists of all the $n\times n$ complex matrices satysfying the following conditions:
\begin{itemize}
\item the non-zero entries are $r$-th roots of unity;
\item there is exactly one non-zero entry in every row and every column.
\end{itemize}
Let now $p|r$. The group $G(r,p,n)$ is the subgroup of $G(r,n)$ of the elements verifying one extra condition:
\begin{itemize}
\item if we write every non-zero element as a power of $\zeta_r$, the sum of all the exponents of $\zeta_r$ appearing in the matrix is a multiple of $p$.
\end{itemize}
We denote by $z_i(g)\in \mathbb Z_r$ the exponent of $\zeta_r$ appearing in the  $i^{\tho}$ row of $g$. We say that $z_i(g)$ is the \emph{color} of $i$ in $g$ and the sum  $z(g)\eqdef z_1(g)+\cdots z_n(g)$ will be called the \emph{color} of $g$. 

It is sometimes convenient to use alternative notation to denote an element in $G(r,n)$, other than the matrix representation. We write $g=[\sigma_1^{z_1},\ldots,\sigma_n^{z_n}]$ meaning that, for all $j\in [n]$, the unique nonzero entry in the $j^{\tho}$ row appears in the $\sigma_j^{\tho}$ column and equals $\zeta_r^{z_j}$ (i.e. $z_j(g)=z_j$). We call this the \emph{window} notation of $g$. We also write in this case $g_i=\sigma_i^{z_i}$. Observe that $[\sigma_1,\ldots,\sigma_n]$ is actually a permutation in $S_n$ and we denote it by $|g|$. Elements of $G(r,n)$  also have a cyclic decomposition which is analogous to the cyclic decomposition of permutations. A \emph{cycle} $c$ of $g\in G(r,n)$ is an object of the form $c=(a_1^{z_{a_1}},\ldots,a_k^{z_{a_k}})$, where $(a_1,\ldots,a_k)$ is a cycle of the permutation $|g|$, and $z_{a_i}=z_{a_i}(g)$ for all $i\in[k]$. We let $k$ be the \emph{length} of $c$, $z(c)\eqdef z_{a_1}+\cdots+z_{a_k}$ be the \emph{color} of $c$, and $\Supp(c) \eqdef \{a_1,\ldots,a_k\}$ be the \emph{support} of $c$. We will sometimes
  write an element $g\in G(r,n)$ as the product of its cycles.
For example if $g\in G(3,6)$ has window notation $g=[3^0,4^1,6^1,2^0,5^2,1^2]$ we have that the cyclic decomposition of $g$ is given by $g=(1^0,3^1,6^2)(2^1,4^0)(5^2)$.
Note that we use square brackets for the window notation and round brackets for the cyclic notation.

If $\nu=(n_0,\ldots,n_k)$ is a composition of $n$ we let $G(r,\nu)\eqdef G(r,n_0)\times\cdots \times G(r,n_k)$ be the (Young) subgroup of $G(r,n)$ given by
$$
G(r,\nu)=\{[\sigma_1^{z_1},\ldots,\sigma_n^{z_n}]\in G(r,n):\,\sigma_i\leq n_0+\cdots +n_j\textrm{ if and only if }i\leq n_0+\cdots+ n_j\}.
$$
If $S\subseteq [n]$ we also let
$$
G(r,S)=\{[\sigma_1^{z_1},\ldots,\sigma_n^{z_n}]\in G(r,n):\,\sigma_i^{z_i}=i^0\textrm{ for all }i\notin S\}.
$$
Consider a partition $\lambda=(\lambda_1,\ldots,\lambda_l)$ of $n$. The \emph{Ferrers diagram of shape $\lambda$} is a collection of boxes, arranged in left-justified rows, with $\lambda_i$ boxes in row $i$.
We denote by $\Fer(r,n)$ the set of $r$-tuples $(\lambda^{(0)},\ldots,\lambda^{(r-1)})$ of Ferrers diagrams such that $\sum |\lambda^{(i)}|=n$.

The set of conjugacy classes of $G(r,n)$ is naturally parametrized by $\Fer(r,n)$ in the following way. If $(\alpha^{(0)},\ldots,\alpha^{(r-1)})\in \Fer(r,n)$ we let $m_{i,j}$ be the number of parts of $\alpha^{(i)}$ equal to $j$. Then the set
$$
\mathrm{cl}_{\alpha^{(0)},\ldots,\alpha^{(r-1)}}=\{g\in G(r,n):g\textrm{ has $m_{i,j}$ cycles of color $i$ and length $j$}\}
$$ 
is a conjugacy class of $G(r,n)$, and all conjugacy classes are of this form.

The set $\Irr(r,n)$ of the irreducible complex representations of $G(r,n)$ is also parametrized by the elements of $\Fer(r,n)$: $\Irr(r,n)=\{\rho_{(\lambda^{(0)},\ldots,\lambda^{(r-1)})},\textrm{ with }(\lambda^{(0)},\ldots,\lambda^{(r-1)})\in \Fer(r,n)\}
$. These representations are described in the following result (where we use the symbol $\otimes$ for the internal tensor product of representations and the symbol $\odot$ for the external tensor product of representations).
\begin{prop}\label{rapp di grn}
Let $\lambda=(\lambda^{(0)},\ldots,\lambda^{(r-1)})\in \Fer(r,n)$, $n_i=|\lambda^{(i)}|$ and $\nu=(n_0,\ldots,n_{r-1})$. The irreducible representation $\rho_{\lambda}$ of $G(r,n)$ is given by
  $$\rho_{\lambda}= \Ind_{G(r,\nu)}^{G(r,n)} \left(\bigodot_{i=0}^{r-1}(\gamma_{n_i}^{\otimes i}\otimes \tilde \rho_{\lambda^{(i)}})\right),$$
where:
\begin{itemize}
        
        \item $\tilde \rho_{\lambda^{(i)}}$ is the natural extension to $G(r,n_i)$ of the irreducible (Specht) representation $\rho_{\lambda^{(i)}}$ of $S_{n_i}$, i.e. $\tilde \rho_{\lambda^{(i)}}(g)\eqdef \rho_{\lambda^{(i)}}(|g|)$ for all $g\in G(r,n_i)$.
        \item $\gamma_{n_i}$ is the
1-dimensional
  representation of $G(r,n_i)$ given by
\begin{align*}
 \gamma_{n_i}:G(r,n_i)&\rightarrow  \mathbb{C}^*\\
 g&\mapsto \zeta_r^{z(g)}.
\end{align*}
      \end{itemize}
\end{prop}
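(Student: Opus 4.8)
The plan is to prove this by Clifford theory, in the guise of the little-group method of Wigner and Mackey, exploiting the semidirect product decomposition $G(r,n)=N\rtimes S_n$, where $N\cong C_r^n$ is the abelian normal subgroup of diagonal matrices and $S_n$ sits inside $G(r,n)$ as the permutation matrices, acting on $N$ by permuting coordinates. First I would record that the irreducible characters of the cyclic group $C_r$ are $\chi_0,\ldots,\chi_{r-1}$ with $\chi_j(\zeta_r^z)=\zeta_r^{jz}$, so that the irreducible characters of $N$ are exactly the external products $\chi_{j_1}\odot\cdots\odot\chi_{j_n}$, on which $S_n$ acts by permuting the indices $j_1,\ldots,j_n$.

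Next I would analyse the $S_n$-orbits on $\Irr(N)$ together with the associated inertia groups. An orbit is determined by the multiplicities $(n_0,\ldots,n_{r-1})$ with which the characters $\chi_0,\ldots,\chi_{r-1}$ occur, that is by a composition $\nu=(n_0,\ldots,n_{r-1})$ of $n$; choosing the orbit representative $\phi=\chi_0^{\odot n_0}\odot\cdots\odot\chi_{r-1}^{\odot n_{r-1}}$ (with the coordinates grouped in increasing color), its stabiliser in $S_n$ is the Young subgroup $S_\nu=S_{n_0}\times\cdots\times S_{n_{r-1}}$, and hence its inertia group in $G(r,n)$ is precisely $G(r,\nu)$. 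The decisive point is that $\phi$ \emph{extends} to a character of $G(r,\nu)$: on the $i$-th factor $G(r,n_i)$ the restriction of $\phi$ to the diagonal is $\chi_i^{\odot n_i}$, and this is exactly the restriction of $\gamma_{n_i}^{\otimes i}$, since $\gamma_{n_i}^{\otimes i}(g)=\zeta_r^{i\,z(g)}=\prod_{j}\zeta_r^{i\,z_j(g)}$. Thus $\bigodot_{i}\gamma_{n_i}^{\otimes i}$ is a one-dimensional extension of $\phi$ to $G(r,\nu)$, so that the Mackey obstruction vanishes; this is where the semidirect structure, namely the presence of the section $S_n\hookrightarrow G(r,n)$, is really used.

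With the extension in hand I would invoke the standard conclusion of the little-group method. Because $G(r,\nu)/N\cong S_\nu$ and the extension $\bigodot_i\gamma_{n_i}^{\otimes i}$ exists, every irreducible representation of $G(r,\nu)$ lying over $\phi$ has the form $\left(\bigodot_i\gamma_{n_i}^{\otimes i}\right)\otimes\pi$, where $\pi$ is the pullback of an irreducible representation of $S_\nu$; the irreducibles of $S_\nu$ are the external products of Specht modules $\rho_{\lambda^{(0)}}\odot\cdots\odot\rho_{\lambda^{(r-1)}}$ with $|\lambda^{(i)}|=n_i$, whose pullback along $|\cdot|$ is $\bigodot_i\tilde\rho_{\lambda^{(i)}}$. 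Inducing these up to $G(r,n)$ yields irreducible, pairwise inequivalent representations, and as $(\lambda^{(0)},\ldots,\lambda^{(r-1)})$ ranges over $\Fer(r,n)$ one obtains exactly $|\Fer(r,n)|$ of them, which matches the number of conjugacy classes recorded above; hence they exhaust $\Irr(r,n)$ and the displayed formula follows. The only genuinely delicate steps are the verification that $\gamma_{n_i}^{\otimes i}$ restricts to $\chi_i^{\odot n_i}$ on the diagonal and the bookkeeping in the Mackey irreducibility and disjointness criterion; the remainder is the formal machinery of Clifford theory.
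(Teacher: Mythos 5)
Your proof is correct. Note that the paper itself offers no proof of this proposition: it is stated in the ``Notation and prerequisites'' section as a classical fact about the wreath products $G(r,n)=C_r\wr S_n$, and the paper only invokes Clifford theory later, to pass from $G(r,1,q,n)$ to $G(r,p,q,n)$ in Theorem \ref{irrepgrpqn}. Your Wigner--Mackey little-group argument --- decomposing $G(r,n)=N\rtimes S_n$ with $N\cong C_r^n$, identifying the orbits of $S_n$ on $\Irr(N)$ with compositions $\nu$, checking that the orbit representative extends to its inertia group $G(r,\nu)$ via $\bigodot_i\gamma_{n_i}^{\otimes i}$ so the Mackey obstruction vanishes, and then inducing the resulting representations $(\bigodot_i\gamma_{n_i}^{\otimes i})\otimes(\bigodot_i\tilde\rho_{\lambda^{(i)}})$ and counting against the conjugacy classes parametrized by $\Fer(r,n)$ --- is precisely the standard proof of this result, and all the key verifications (the extension property and the exhaustion/inequivalence bookkeeping) are done correctly.
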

Let us consider a group $G(r,p,n)$. Given $q\in \mathbb{N}$ such that $q|r, pq |rn$, $G(r,p,n)$ contains a unique cyclic scalar subgroup $C_q$ of order $q$. In this case, we can consider the quotient group (see \cite[Section 4]{Ca1})$$G(r,p,q,n)\eqdef\dfrac{G(r,p,n)}{C_q} .$$
A group of this form is called a \textit{projective reflection group}.
Since the conditions of existence of $G(r,p,q,n)$ are symmetric with respect to $p$ and $q$, we can give the following
\begin{defn}
 Let $G=G(r,p,q,n)$ as above. Its \emph{dual group} $G^*$ is the group obtained from $G$ by simply exchanging the roles of $p$ and $q$:
$$G^*\eqdef G(r,q,p,n).$$
\end{defn}
If $\lambda=(\lambda^{(0)},\ldots,\lambda^{(r-1)}) \in \Fer(r,n)$ we define the \emph{color} of $\lambda$ by $z(\lambda)=\sum_i i|\lambda^{(i)}|$ and, if $p|r$ we let $\Fer(r,p,1,n)\eqdef \{\lambda\in \Fer(r,n):z(\lambda)\equiv0 \mod p\}$. The set of irreducible representations $\Irr(r,1,q,n)$ of the group $G(r,1,q,n)=G(r,q,n)^*$ is given by those representations of $G(r,n)$ whose kernel contains the scalar cyclic subgroup $C_q$. It follows from this observation and the description in Proposition \ref{rapp di grn} that
$$
\Irr(r,1,q,n)=\{\rho_{\lambda}:\, \lambda\in \Fer(r,q,1,n)\}.
$$

The irreducible representations of $G(r,p,q,n)$ can now be deduced essentially by Clifford theory from those of $G(r,1,q,n)$. We apply this theory in this case as the final description will be very explicit. 

Once restricted to $G(r,p,q,n)$, the irreducible representations of $G(r,1,q,n)$ may not be irreducible anymore. Let us see which of them split into more than one $G(r,p,q,n)$-module. Consider the \emph{color representation} $\gamma_n:G(r,n)\rightarrow  \mathbb{C}^*$ given by $g\mapsto \zeta_r^{z(g)}$.
We note that $\gamma_n^{r/p}$ is a well-defined representation of $G(r,1,q,n)$ of order $p$ and that the kernel of the cyclic group $\Gamma_p=\langle\gamma_n^{r/p}\rangle$ of representations of $G(r,1,q,n)$ is $G(r,p,q,n)$. The group $\Gamma_p$ acts on the set of the irreducible representations of $G(r,1,q,n)$ by internal tensor product. If we let $n_i=|\lambda^{(i)}|$ and $\nu=(n_0,\ldots,n_{r-1})$ we have that this action is given by
\begin{eqnarray}\label{azione}
 \gamma_n^{r/p} \otimes  \rho_{\lambda^{(0)},\ldots,\lambda^{(r-1)}}&=& \Ind_{G(r,\nu)}^{G(r,n)}\big(\gamma_n^{r^p}|_{G(r,\nu)}\big)\otimes
\bigodot_{i=0}^{r-1}(\gamma_{n_i}^{\otimes i}\otimes \tilde \rho_{\lambda^{(i)}})\\
&=&\rho_{\lambda^{(r-{r/p})},\ldots, \lambda^{(r-1)}, \lambda^{(0)}, \ldots,\lambda^{(r-1-r/p)}},\nonumber
\end{eqnarray}
and so it simply corresponds to a shift of $r/p$ of the indexing partitions.

It is now natural to let $\Fer(r,q,p,n)$ be the set of orbits in $\Fer(r,q,1,n)$ with respect to the action of $\Gamma_p$ described in \eqref{azione} and if $\lambda=(\lambda^{(0)},\ldots,\lambda^{(r-1)})\in \Fer(r,q,1,n)$ we denote by $[\lambda]$ or $[\lambda^{(0)},\ldots,\lambda^{(r-1)}]\in \Fer(r,q,p,n)$ the corresponding orbit.  Moreover, if $[\lambda^{(0)},\ldots,\lambda^{(r-1)}]\in \Fer(r,q,p,n)$ we let $\St_{[\lambda^{(0)},\ldots,\lambda^{(r-1)}]}$ be the set of \emph{standard multitableaux} obtained by filling the boxes of any element in $[\lambda^{(0)},\ldots,\lambda^{(r-1)}]$ with all the numbers from $1$ to $n$ appearing once, in such way that rows are increasing from left to right and columns are increasing from top to bottom (see \cite[Section 6]{Ca1}).

We will now state a theorem which applies in full generality to every group $G(r,p,q,n)$, and fully clarifies the nature of its irreducible representations.  Here and in what follows, if $\lambda\in \Fer(r,n)$ we let $m_p(\lambda)=|\Stab_{\Gamma_p}(\lambda)|$ and we observe that if $[\lambda]=[\mu]\in \Fer(r,q,p,n)$ then $m_p(\lambda)=m_p(\mu)$.
\begin{thm}\label{irrepgrpqn}
   The set of irreducible representations $\Irr(r,p,q,n)$ of $G(r,p,q,n)$ can be parametrized in the following way 
\begin{eqnarray*}
 \Irr(r,p,q,n)&=&\{\rho_{[\lambda]}^j:[\lambda]\in \Fer(r,q,p,n)\textrm{ and } j\in [0,m_p(\lambda)-1]\},
 \end{eqnarray*}
so that the following conditions are satisfied:
\begin{itemize}
\item   $\dim(\rho_{[\lambda]}^j)=|\St_{[\lambda]}|$ for all $[\lambda]\in \Fer(r,q,p,n)$ and $j\in [0,m_p(\lambda)-1]$;
\item $
\mathrm{Res}^{G(r,1,q,n)}_{G(r,p,q,n)}(\rho_{\lambda})\cong \bigoplus_{j} \rho_{[\lambda]}^{j}$ for all $\lambda\in \Fer(r,q,1,n)$.
\end{itemize}

\end{thm}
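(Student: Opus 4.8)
The plan is to realize $G(r,p,q,n)$ as a normal subgroup of $G(r,1,q,n)$ with cyclic quotient and to run Clifford theory. Write $H=G(r,1,q,n)$ and $N=G(r,p,q,n)$. As observed just before the statement, $\Gamma_p=\langle\gamma_n^{r/p}\rangle$ is a cyclic group of linear characters of $H$ of order $p$ whose common kernel is exactly $N$; thus $N\trianglelefteq H$, the quotient $H/N$ is cyclic of order $p$, and $\Gamma_p$ is canonically identified with the character group $\widehat{H/N}$, acting on $\Irr(H)=\{\rho_\lambda:\lambda\in\Fer(r,q,1,n)\}$ by internal tensor product.

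First I would record the two facts from Clifford theory that drive everything. For $\rho\in\Irr(H)$ let $A_\rho=\{\chi\in\Gamma_p:\chi\otimes\rho\cong\rho\}$ be its stabilizer. Because $H/N$ is \emph{cyclic}, the relevant obstruction in $H^2(-,\mathbb{C}^{*})$ vanishes, so any constituent of $\mathrm{Res}^H_N\rho$ extends to its inertia group; by Gallagher's theorem this forces $\mathrm{Res}^H_N\rho$ to be \emph{multiplicity free}, and a Frobenius-reciprocity/dimension count shows that its number of pairwise distinct irreducible constituents is exactly $|A_\rho|$. This multiplicity-one phenomenon, available precisely because the quotient is cyclic, is the representation-theoretic source of the split representations mentioned in the introduction. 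Applying this to $\rho=\rho_\lambda$ and using the explicit description \eqref{azione}, the generator $\gamma_n^{r/p}$ acts as the cyclic shift of the indexing partitions by $r/p$; hence $A_{\rho_\lambda}=\Stab_{\Gamma_p}(\lambda)$ has order $m_p(\lambda)$. Labelling the $m_p(\lambda)$ constituents by $j\in[0,m_p(\lambda)-1]$ and calling them $\rho_{[\lambda]}^j$ yields at once the restriction formula $\mathrm{Res}^{H}_{N}(\rho_\lambda)\cong\bigoplus_j\rho_{[\lambda]}^j$.

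Next I would check that $([\lambda],j)\mapsto\rho_{[\lambda]}^j$ is a well-defined bijection onto $\Irr(N)$. Since $\chi|_N$ is trivial for $\chi\in\Gamma_p$, we have $\mathrm{Res}^H_N(\chi\otimes\rho_\lambda)=\mathrm{Res}^H_N(\rho_\lambda)$, so all members $\rho_\mu$ of a single $\Gamma_p$-orbit, that is, all representatives of a fixed $[\lambda]\in\Fer(r,q,p,n)$, produce the same set of $N$-constituents; this makes the labelling depend only on the orbit $[\lambda]$ and justifies the notation. Conversely, for $\sigma\in\Irr(N)$ Frobenius reciprocity produces an $H$-irreducible lying over $\sigma$, so every $\sigma$ occurs, and the constituents of $\Ind^H_N\sigma$ form a single $\Gamma_p$-orbit, so an $N$-irreducible determines a unique $H$-orbit above it. Hence the constituent sets of distinct orbits are disjoint and together exhaust $\Irr(N)$, which gives the claimed parametrization of $\Irr(r,p,q,n)$.

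Finally, the dimension statement. Because $\mathrm{Res}^H_N\rho_\lambda$ is multiplicity free with $m_p(\lambda)$ mutually $H$-conjugate, hence equidimensional, constituents, each satisfies $\dim\rho_{[\lambda]}^j=\dim\rho_\lambda/m_p(\lambda)$. On the other hand $\dim\rho_\lambda=|\St_\lambda|$, the number of standard multitableaux on $\lambda$, and the combinatorial identity $|\St_{[\lambda]}|=|\St_\lambda|/m_p(\lambda)$ of \cite{Ca1} (reflecting the $(r/m_p(\lambda))$-periodicity of any $\lambda$ fixed by $\Stab_{\Gamma_p}(\lambda)$) converts this into $\dim\rho_{[\lambda]}^j=|\St_{[\lambda]}|$. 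I expect the main obstacle to be exactly the two points where cyclicity of $H/N$ intervenes: establishing multiplicity-one of the restriction (equivalently, the vanishing of the Clifford obstruction) and matching the resulting count with the combinatorial stabilizer $m_p(\lambda)$. Once these are in place, completeness and the dimension formula follow from the bookkeeping above.
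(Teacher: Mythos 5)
Your proposal is correct and follows essentially the same route as the paper, which obtains Theorem \ref{irrepgrpqn} by applying Clifford theory to the pair $G(r,p,q,n)\trianglelefteq G(r,1,q,n)$ with the cyclic group $\Gamma_p=\langle\gamma_n^{r/p}\rangle$ of linear characters acting by the shift \eqref{azione}, deferring the standard details to the literature. Your write-up simply makes those details explicit (vanishing of the $H^2$ obstruction for cyclic quotients, Gallagher, the count of constituents by $|\Stab_{\Gamma_p}(\lambda)|=m_p(\lambda)$, and the identity $|\St_{[\lambda]}|=|\St_\lambda|/m_p(\lambda)$), all of which are accurate.
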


If $m_p(\lambda)=1$ we sometimes write $\rho^{}_{[\lambda]}$ instead of $\rho^0_{[\lambda]}$ and we say that this is an \emph{unsplit} representation. On the other hand, whenever $m_p(\lambda)>1$, we say that all representations of the form $\rho^j_{[\lambda]}$ are \emph{split} representations. We will come back to this description of the irreducible representations in Section \ref{DFT} with more details.

Let us now turn to give a brief account of the \textit{projective Robinson-Schensted correspondence}, which is an extension of the Robinson-Schensted correspondence for the symmetric group to all groups of the form $G(r,p,q,n)$. This is a surjective map
 $$G(r,p,q,n)\longrightarrow \bigcup_{[\lambda]\in \Fer(r,p,q,n)}\St_{[\lambda]} \times \St_{[\lambda]}$$
such that, if $P,Q\in\St_{[\lambda]}$ then  the cardinality of the inverse image of $(P,Q)$ is equal to $m_q(\lambda)$. In particular we have that this correspondence is a bijection if and only if $\GCD(q,n)=1$. We refer the reader to \cite[Section 10]{Ca1} for the precise definition and further properties of this correspondence.\\
Note that while in Theorem \ref{irrepgrpqn} we use elements $[\lambda]\in \Fer(r,q,p,n)$, in the projective Robinson-Schensted correspondence the elements $[\lambda]$ involved belong to $\Fer(r,p,q,n)$. This is one of the reasons why it is natural to look at the dual groups when studying the representations theory of any projective reflection group of the form $\qc$. 

\section{The model and its natural decomposition}\label{model}

A Gelfand model for a group $G$ is a $G$-module affording each irreducible representation of $G$ exactly once. A Gelfand model was constructed in \cite{Ca2} for every $G(r,p,q,n)$ such that $\GCD(p,n)=1,2$. In order to illustrate it, we need to introduce some new concepts and definitions.

An element $g\in G(r,p,q,n)$ is an \emph{absolute involution} if $g\bar{g}=1$, $\bar{g}$ being the complex conjugate of $g$ (note that this is well-defined since complex conjugation stabilizes the cyclic scalar group $C_q$).
We denote by $I(r,n)$ the set of the absolute involutions of the group  $G(r,n)$ and we similarly define $I(r,p,n)$ and $I(r,p,q,n)$. Moreover we let $I(r,p,n)^*$ stand for the set of the absolute involutions of the group $G(r,p,n)^*$.

The absolute involutions in $I(r,p,q,n)$ can be either symmetric or antisymmetric, according to the following definition:
\begin{defn}
 Let $v\in G(r,p,q,n)$. We say that it is:
\begin{itemize}
 \item symmetric, if every lift of $v$ in $G(r,n)$ is a symmetric matrix;
\item antisymmetric, if every lift of $v$ in $G(r,n)$ is an antisymmetric matrix.
\end{itemize}
\end{defn}
We observe that while a symmetric element is always an absolute involution, an antisymmetric element of $G(r,p,q,n)$ is an absolute involution if and only if $q$ is even (see \cite[Lemma 4.2]{Ca2}). 
Antisymmetric elements can also be characterized in terms of the projective Robinson-Schensted correspondence (see \cite[Lemma 4.3]{Ca2}:
\begin{lem}\label{anrs}
Let $v\in G(r,n)$. Then the following are equivalent
\begin{enumerate}
\item $v$ is antisymmetric;
\item $r$ is even and $v\mapsto ((P_0,\ldots,P_{r-1}),(P_{\frac{r}{2}},\ldots,P_{r-1},P_0,\ldots,P_{\frac{r}{2}}))$ for some $(P_0,\ldots,P_{r-1})\in \St_{\lambda}$ and $\lambda\in \Fer(r,n)$ by the projective Robinson-Schensted correspondence.
\end{enumerate}
\end{lem}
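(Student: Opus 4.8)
The plan is to reduce the statement to two natural equivariance properties of the colored (projective) Robinson--Schensted correspondence on $G(r,n)=G(r,1,1,n)$, and then read off the relation between $P$ and $Q$. Note that here $\GCD(1,n)=1$, so by the property recalled above the correspondence is a \emph{bijection}, which is what makes the converse direction work.

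First I would record the matrix description of antisymmetry. Writing $v$ as a monomial matrix $M$ with $M_{i,|v|(i)}=\zeta_r^{z_i(v)}$, the condition $M^T=-M$ forces $M_{ii}=0$ for all $i$, so $|v|$ is a fixed-point-free involution, and for each transposition $(a,b)$ of $|v|$ it forces $\zeta_r^{z_a}=-\zeta_r^{z_b}$. The latter is solvable only if $-1\in\langle\zeta_r\rangle$, i.e. only if $r$ is even, in which case $-1=\zeta_r^{r/2}$ and the condition reads $z_a-z_b\equiv r/2 \pmod r$. Equivalently, and this is the form I will use, $v$ is antisymmetric if and only if $r$ is even and $v=\zeta_r^{r/2}\,v^T$ as matrices.

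Next I would establish the two equivariances of the correspondence $v\mapsto (P,Q)$, with $P=(P_0,\dots,P_{r-1})$ and $Q=(Q_0,\dots,Q_{r-1})$, directly from its construction as ``split by color, then apply classical RS within each color class'' (see \cite[Section 10]{Ca1}). (i) \emph{Transpose.} Transposing $M$ sends the entry in cell $(i,j)$, of color $c$, to cell $(j,i)$ with the same value and hence the same color; so transposition fixes each color class setwise and inverts the partial permutation inside it. Since classical RS sends $w^{-1}$ to $(Q_c,P_c)$ whenever it sends $w$ to $(P_c,Q_c)$, we obtain $v^T\mapsto (Q,P)$. (ii) \emph{Scalar shift.} The central element $\zeta_r^{r/2}\,\Id$ multiplies every nonzero entry by $\zeta_r^{r/2}$, that is, it adds $r/2$ to every color while leaving $|v|$ and the cells unchanged; hence it relabels the color classes by the cyclic shift $c\mapsto c+r/2$, so $\zeta_r^{r/2}v\mapsto (P',Q')$ with $P'_d=P_{d-r/2}$ and $Q'_d=Q_{d-r/2}$ (indices mod $r$). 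I will abbreviate the operation $P\mapsto P'$ by $\mathrm{shift}_{r/2}$, noting $\mathrm{shift}_{r/2}\circ\mathrm{shift}_{r/2}=\mathrm{id}$.

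Finally I would combine these. If $v$ is antisymmetric, then $r$ is even and $v=\zeta_r^{r/2}v^T$; applying (i) then (ii) gives $\zeta_r^{r/2}v^T\mapsto(\mathrm{shift}_{r/2}Q,\mathrm{shift}_{r/2}P)$, and equating with $v\mapsto(P,Q)$ yields $Q_d=P_{d-r/2}$ for all $d$, i.e. $Q=(P_{r/2},\dots,P_{r-1},P_0,\dots,P_{r/2-1})$, which is precisely condition (2). Conversely, if $r$ is even and $Q=\mathrm{shift}_{r/2}P$, then by (i) and (ii) the element $\zeta_r^{r/2}v^T$ has image $(\mathrm{shift}_{r/2}Q,\mathrm{shift}_{r/2}P)=(P,Q)$, the same as $v$; since the correspondence is a bijection on $G(r,n)$, we conclude $v=\zeta_r^{r/2}v^T$, so $v$ is antisymmetric. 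The main obstacle is the careful verification of equivariance (i): one must check that in this colored correspondence the color genuinely travels with the matrix entry, so that transposition preserves color classes, in order to legitimately invoke the classical RS symmetry theorem class by class; property (ii) and the matrix computation are then routine by comparison. (As a sanity check, the analogous argument with $\mathrm{shift}_{0}$ recovers the known fact that symmetric $v$ correspond to pairs with $Q=P$.)
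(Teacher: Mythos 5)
Your proof is correct, and it is essentially the canonical argument: note that this paper gives no proof of the lemma at all — it is quoted from \cite[Lemma 4.3]{Ca2} — and the route you take (reduce antisymmetry to the matrix identity $v=\zeta_r^{r/2}v^T$, then combine the classical RS symmetry theorem applied within each color class with the color shift $c\mapsto c+r/2$ induced by the scalar $\zeta_r^{r/2}$, using bijectivity of the correspondence for the converse) is precisely the argument of the cited source. Your version of condition (2), with the second tuple ending at $P_{r/2-1}$, also silently corrects an indexing typo in the statement as printed.
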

Now we can deduce the following combinatorial interpretation for the number of antisymmetric elements in a projective reflection group. Since we often deal with even integers, here and in the rest of this paper we let $k'\eqdef \frac{k}{2}$, whenever $k$ is an even integer. \\
\begin{prop}\label{casym} Let $\mathrm{asym}(r,q,p,n)$ be the number of antisymmetric elements in $G(r,q,p,n)$. Then
$$
\mathrm{asym}(r,q,p,n)=\sum_{[\mu,\mu]\in \Fer(r,q,p,n)}\St_{[\mu,\mu]},
$$
where $[\mu,\mu]\in \Fer(r,q,p,n)$ means that $[\mu,\mu]$ varies among all elements in $\Fer(r,q,p,n)$  of the form \\$[\mu^{(0)},\ldots,\mu^{(r'-1)},\mu^{(0)},\ldots,\mu^{(r'-1)}]$, for some $\mu=(\mu^{(0)},\ldots,\mu^{(r'-1)})\in\Fer(r',n')$.
\end{prop}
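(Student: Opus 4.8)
The plan is to reduce everything to the genuine reflection group $G(r,q,n)=G(r,q,1,n)$, where the projective Robinson--Schensted correspondence is an honest bijection, to count the antisymmetric elements there by means of Lemma \ref{anrs}, and then to descend to the quotient $G(r,q,p,n)=G(r,q,n)/C_p$.

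First I would count the antisymmetric elements of $G(r,q,n)$. Such an element is literally an antisymmetric monomial matrix, so Lemma \ref{anrs} applies directly. For $G(r,q,n)$ the last parameter is $1$, so the correspondence is a bijection onto $\bigsqcup_{\lambda\in\Fer(r,q,1,n)}\St_\lambda\times\St_\lambda$, and by the lemma the antisymmetric elements are exactly those $v\mapsto(P,Q)$ with $Q$ the cyclic shift of $P$ by $r'$. Since $P$ and $Q$ must have the same shape $\lambda$, this forces $\lambda$ to be fixed by the shift by $r'$, i.e. $\lambda=[\mu^{(0)},\ldots,\mu^{(r'-1)},\mu^{(0)},\ldots,\mu^{(r'-1)}]$ for some $\mu\in\Fer(r',n')$; conversely every $P\in\St_{(\mu,\mu)}$ determines its admissible $Q$ uniquely. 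Hence
\[
\mathrm{asym}(r,q,1,n)=\sum_{(\mu,\mu)\in\Fer(r,q,1,n)}|\St_{(\mu,\mu)}|.
\]

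Next I would pass to $G(r,q,p,n)$. The projection $\pi\colon G(r,q,n)\to G(r,q,p,n)$ has kernel the scalar group $C_p$, so its fibres are $C_p$-cosets of cardinality $p$. Multiplying an antisymmetric matrix by a scalar preserves antisymmetry, so each such coset is either entirely antisymmetric or contains no antisymmetric element, and the lifts of an element of $G(r,q,p,n)$ form exactly one coset; thus $\pi$ restricts to a $p$-to-$1$ map between the respective sets of antisymmetric elements, and $\mathrm{asym}(r,q,p,n)=\frac1p\,\mathrm{asym}(r,q,1,n)$.

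Finally I would turn the sum over diagrams into the orbit sum in the statement. The $(\mu,\mu)$-type locus is stable under the shift action of $\Gamma_p$ (a shift of a period-$r'$ sequence is again of period $r'$), so $\Fer(r,q,p,n)$ groups these diagrams into $\Gamma_p$-orbits $[\mu,\mu]$, each of size $p/m_p((\mu,\mu))$ and with $|\St_{\lambda'}|$ constant along the orbit. The dimension count in Theorem \ref{irrepgrpqn}, together with $\dim\rho_\lambda=|\St_\lambda|$, gives $|\St_{(\mu,\mu)}|=m_p((\mu,\mu))\,|\St_{[\mu,\mu]}|$, so each orbit contributes $\frac{p}{m_p}\,|\St_{(\mu,\mu)}|=p\,|\St_{[\mu,\mu]}|$ to the representative sum; therefore
\[
\sum_{(\mu,\mu)\in\Fer(r,q,1,n)}|\St_{(\mu,\mu)}|=p\sum_{[\mu,\mu]\in\Fer(r,q,p,n)}|\St_{[\mu,\mu]}|,
\]
and dividing by $p$ yields the proposition. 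I expect this last bookkeeping to be the delicate point: one must check that the three appearances of $m_p$ --- from the stabilizer of a diagram, from the orbit size, and from $|\St_{[\lambda]}|=|\St_\lambda|/m_p(\lambda)$ --- cancel precisely, and that restricting the shift action to the $(\mu,\mu)$-type diagrams changes none of these counts.
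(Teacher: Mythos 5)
Your proposal is correct and takes essentially the same route as the paper: count the antisymmetric elements of $G(r,q,n)=G(r,q,1,n)$ via Lemma \ref{anrs} (the shape is forced to be $(\mu,\mu)$ and $Q$ is determined by $P$), then descend to $G(r,q,p,n)$ by observing that both the antisymmetric elements and the standard multitableaux are counted $p$-to-$1$ under lifting. The only cosmetic difference is that you justify $|\St_{(\mu,\mu)}|=m_p(\mu,\mu)\,|\St_{[\mu,\mu]}|$ through the dimension count of Theorem \ref{irrepgrpqn}, whereas the paper asserts directly that each element of $\St_{[\mu,\mu]}$ has $p$ distinct lifts in $\bigcup_{(\nu,\nu)\in[\mu,\mu]}\St_{(\nu,\nu)}$; both amount to the same bookkeeping.
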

\begin{proof}
   
Observe that if $v\in G(r,q,n)$ is antisymmetric and $(P_0,\ldots,P_{r-1})$ and $\lambda$ are as in Lemma \ref{anrs}, then necessarily $\lambda\in \Fer(r,q,1,n)$ is of the form $\lambda=(\mu,\mu)$, for some $\mu\in \Fer(r',n')$. So, if $v\mapsto(P,Q)$ is antisymmetric we have that $P$ is any element in $\St_{(\mu,\mu)}$ for some $\mu\in \Fer(r',n')$ whilst $Q$ is uniquely determined by $P$. 
So we deduce that
$$
\mathrm{asym}(r,q,1,n)=\sum_{(\mu,\mu)\in \Fer(r,q,1,n)}\St_{(\mu,\mu)}.
$$

The result now follows since every antisymmetric element in $G(r,q,p,n)$ has $p$ distinct lifts in $G(r,q,n)$ and any element in $\St_{[\mu,\mu]}$ has $p$ distinct lifts in $\cup_{(\nu,\nu)\in [\mu,\mu]} \St_{(\nu,\nu)}$.

 \end{proof}
Before describing the Gelfand model for the involutory reflection groups we need to recall some further notation from \cite{Ca2}.
If $\sigma, \tau \in S_n$ with $\tau^2=1$ we let 
\begin{itemize}
\item$\Inv(\sigma)=\{\{i,j\}:(j-i)(\sigma(j)-\sigma(i))<0\}$;
\item$ \Pair(\tau)=\{\{i,j\}:\tau(i)=j\neq i\}$;
\item $\inv_{\tau}(\sigma)=|\{\Inv(\sigma)\cap \Pair(\tau)|$.
\end{itemize}

If $g\in G(r,p,q,n)$, $v\in I(r,q,p,n)$, $\tilde g$ any lift of $g$ in $G(r,p,n)$ and $\tilde v$ any lift of $v$ in $G(r,q,n)$, we let 
\begin{itemize}
\item $\inv_v(g)=\inv_{|v|}(|g|);$
\item $\langle g,v\rangle=\sum_{i=1}^n z_i(\tilde g)z_i(\tilde v) \in \mathbb Z_r$;
\item $a(g,v)=z_1(\tilde v)-z_{|g|^{-1}(1)}(\tilde v)\in \mathbb Z_{r}.$
\end{itemize}
The verification that $\langle g,v\rangle$ and $a(g,v)$ are well-defined is straightforward.

We are now ready to present the Gelfand model constructed in \cite{Ca2}.

\begin{thm}\label{modello}Let $\GCD(p,n)=1,2$ and let $$M(r,q,p,n)\eqdef \bigoplus_{v\in I(r,q,p,n)}\mathbb C C_v$$ and $\varrho:G(r,p,q,n)\rightarrow GL(M(r,q,p,n))$ be defined by
\begin{equation} \label{action}
\varrho(g) (C_v) \eqdef \left\{\begin{array}{ll}
\zeta_r^{\langle g,v\rangle} (-1)^{\inv_v(g)}C_{|g|v|g|^{-1}} & \textrm{ if $v$ is symmetric}\\
\zeta_r^{\langle g,v\rangle}\zeta_r^{a(g,v)}C_{|g|v|g|^{-1}}& \textrm{ if $v$ is antisymmetric}.\end{array}\right.
 \end{equation}
Then $(M(r,q,p,n),\varrho)$ is a Gelfand model for $G(r,p,q,n)$.
\end{thm}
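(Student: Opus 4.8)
This is the model of \cite{Ca2}; the plan is to prove two things separately: that $\varrho$ is a genuine representation of $G(r,p,q,n)$, and that its character equals $\sum_{\chi\in\Irr(r,p,q,n)}\chi$.

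\emph{Step 1: $\varrho$ is a representation.} Write $\psi(g,v)$ for the scalar in \eqref{action}, so that $\varrho(g)(C_v)=\psi(g,v)\,C_{|g|v|g|^{-1}}$. One first checks the combinatorial skeleton: conjugation by the permutation $|g|$ preserves $G(r,q,p,n)$, carries absolute involutions to absolute involutions, and preserves the symmetric/antisymmetric dichotomy, so $v\mapsto|g|v|g|^{-1}$ is a genuine action on the index set $I(r,q,p,n)$ respecting the splitting $M=M^0\oplus M^1$. One then checks that $\langle g,v\rangle$, $\inv_v(g)$ and $a(g,v)$ — hence $\psi(g,v)$ — are independent of the chosen lifts (the ambiguities lie in $C_q$, resp. $C_p$, and the stated divisibility makes the three quantities well defined in $\mathbb Z_r$, $\mathbb Z_2$, $\mathbb Z_r$); these are the routine verifications already flagged in the text, and they also ensure $\varrho$ descends from $G(r,p,n)$ to the quotient $G(r,p,q,n)$. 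Granting this, $\varrho(gh)=\varrho(g)\varrho(h)$ is equivalent to the single cocycle identity $\psi(gh,v)=\psi(g,|h|v|h|^{-1})\,\psi(h,v)$ together with $\psi(1,v)=1$.

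\emph{Step 2: the cocycle.} The identity factors over the three ingredients of $\psi$. The color term $\zeta_r^{\langle\cdot,\cdot\rangle}$ satisfies the required multiplicativity because colors are additive along the group law and $\zeta_r^{\langle\cdot,v\rangle}$ is, on the stabilizer of $v$, a linear character twisted by conjugation. For symmetric $v$ the mod-$2$ term involves only $|g|,|h|$ and the pairing $\Pair(|v|)$, and the needed identity $\inv_v(gh)\equiv\inv_{|h|v|h|^{-1}}(g)+\inv_v(h)\pmod 2$ is exactly the inversion cocycle underlying the Inglis--Richardson--Saxl model of $S_n$, using $\Pair(|h|v|h|^{-1})=|h|\,\Pair(|v|)$. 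For antisymmetric $v$ the shift telescopes: since $z_j(|h|v|h|^{-1})=z_{|h|^{-1}(j)}(v)$ and $|gh|=|g||h|$, the two occurrences of $z_{|h|^{-1}(1)}(v)$ in $a(g,|h|v|h|^{-1})+a(h,v)$ cancel, leaving $z_1(v)-z_{|gh|^{-1}(1)}(v)=a(gh,v)$. Assembling the three factors gives the cocycle, so $\varrho$ is a representation.

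\emph{Step 3: the character.} By Theorem \ref{irrepgrpqn} we have $\sum_{\chi}\dim\chi=\sum_{[\lambda]\in\Fer(r,q,p,n)}m_p(\lambda)\,|\St_{[\lambda]}|$, while $\dim M=|I(r,q,p,n)|$. The projective Robinson--Schensted correspondence for $G(r,q,p,n)$ restricts, on absolute involutions, to the diagonal pairs $(P,P)$ for symmetric elements and to the shifted-diagonal pairs of Lemma \ref{anrs} for antisymmetric ones, with fibres of size $m_p(\lambda)$; summing over $[\lambda]$, and invoking Proposition \ref{casym} for the antisymmetric contribution, gives $|I(r,q,p,n)|=\sum_{[\lambda]}m_p(\lambda)\,|\St_{[\lambda]}|$, so $\dim M=\sum_\chi\dim\chi$. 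Writing $\chi_M=\sum_\chi m_\chi\chi$ with $m_\chi\in\mathbb Z_{\ge 0}$, we have $\dim M=\sum_\chi m_\chi\dim\chi$; hence it suffices to prove $m_\chi\ge 1$ for every $\chi$, since then $\sum_\chi(m_\chi-1)\dim\chi=0$ with nonnegative summands forces every $m_\chi=1$, which is the Gelfand property. To see each irreducible occurs, decompose $M=\bigoplus_c M(c)$ over $S_n$-conjugacy classes of absolute involutions; a representative of $c$ has wreath-type centraliser, so $M(c)\cong\Ind_H^{G(r,p,q,n)}(\theta)$ for an explicit subgroup $H$ and one-dimensional $\theta$ recording $\psi$, and decomposing these induced modules by the Pieri/branching rules behind Proposition \ref{rapp di grn} recovers every $\rho^j_{[\lambda]}$. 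For the symmetric classes this reduces, on passing to $G(r,n)$, to the wreath-product model of \cite{CaFu} and ultimately to \cite{IRS}.

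\emph{Main obstacle.} The crux is the antisymmetric submodule $M^1$, present precisely in the case $\GCD(p,n)=2$. Here the relevant classes $c$ carry the split representations $\rho^0_{[\lambda]},\dots,\rho^{m_p(\lambda)-1}_{[\lambda]}$, and plain dimension-counting cannot separate these equidimensional constituents. Disentangling them, and verifying that the scalar $\zeta_r^{a(g,v)}$ distributes the $M(c)$ across the distinct split summands exactly once each, is what forces the discrete Fourier transform machinery; this is the step I expect to absorb the bulk of the work, whereas Steps 1--2 and the symmetric part of Step 3 are essentially bookkeeping on top of the already-established $S_n$ and $G(r,n)$ models.
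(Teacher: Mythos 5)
First, a point of order: the paper you were given does not actually prove Theorem \ref{modello} --- it is recalled verbatim from \cite{Ca2}, and the present paper only uses it as input. So your proposal has to stand as a self-contained argument, and judged that way its skeleton is reasonable: the three cocycle identities in your Steps 1--2 are correct (the inversion cocycle $\inv_v(gh)\equiv\inv_{|h|v|h|^{-1}}(g)+\inv_v(h) \pmod 2$ and the telescoping of $a(\cdot,\cdot)$ both check out), and the dimension count in Step 3 is sound, being exactly the involutory property $|I(r,q,p,n)|=\sum_{[\lambda]\in\Fer(r,q,p,n)}m_p(\lambda)\,|\St_{[\lambda]}|$ obtained from the projective Robinson--Schensted correspondence and Proposition \ref{casym}. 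Your reduction ``$\dim M=\sum_\chi\dim\chi$ plus $m_\chi\ge 1$ for all $\chi$ forces multiplicity one'' is also valid.

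The genuine gap is that $m_\chi\ge 1$ is never proved for the split representations, and the one sentence you offer for it --- that ``decomposing these induced modules by the Pieri/branching rules behind Proposition \ref{rapp di grn} recovers every $\rho^j_{[\lambda]}$'' --- cannot work as a method. When $m_p(\lambda)=2$ the two constituents $\rho^0_{[\lambda]}$ and $\rho^1_{[\lambda]}$ have equal dimension, are interchanged by conjugation by any element of odd color, and have characters agreeing on every unsplit class; restriction from $G(r,n)$ always produces them together, so no amount of branching from $G(r,n)$-modules (including passing to \cite{CaFu} or \cite{IRS}) can tell you that $M$ contains \emph{both} rather than one of them twice. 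Distinguishing them requires evaluating characters on the split classes $\mathrm{cl}^0_{2\alpha}$, $\mathrm{cl}^1_{2\alpha}$, i.e.\ precisely the associator/difference-character computation (Proposition \ref{mu general case}) matched against the explicit evaluation of $\chi_{\phi^0}-\chi_{\phi^1}$ on the antisymmetric span (Theorem \ref{AsymGrpn}), which is how \cite{Ca2} and Sections \ref{DFT}--\ref{anclas} of this paper actually settle the question. Your closing ``Main obstacle'' paragraph concedes exactly this point, deferring the step that ``absorbs the bulk of the work''; but that step is not a detail to be absorbed later --- it \emph{is} the theorem. As written, your argument establishes only that $(M,\varrho)$ is a representation of the correct total dimension containing each unsplit irreducible, not that it is a Gelfand model.
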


Let us have a short digression to recall what happens for the wreath products $G(r,n)$. In this case the setting is much simpler since the group $G(r,n)$ coincides with its dual, there are no split representations, and no antisymmetric absolute involutions. Moreover the absolute involutions are characterized as those elements $v$ satisfying
$$
v\mapsto(P,P)
$$
for some $P\in \St_\lambda$, $\lambda\in \Fer(r,n)$, via the Robinson-Schensted correspondence. We write in this case $\Sh(v)\eqdef\lambda$.
If $c$ is an $S_n$-conjugacy class of absolute involutions in $G(r,n)$ we also let $\Sh(c)=\cup_{v\in c}\Sh(v)\subset \Fer(r,n)$. The main result in \cite{CaFu} is the following theorem of compatibility with respect to the Robinson-Schensted correspondence of the irreducible decomposition of the submodules $M(c)$ of $M(r,1,1,n)$ defined in the introduction.
\begin{thm}\label{wreath}
Let $c$ be a $S_n$-conjugacy class of absolute involutions in $G(r,n)$. Then
the following decomposition holds:
$$M(c) \cong \bigoplus_{\begin{subarray}{c}
\lambda\in \Sh(c)\\
 \end{subarray}}
  \rho_\lambda.$$
\end{thm}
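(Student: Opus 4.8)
The plan is to establish the decomposition by combining the explicit action formula for $\varrho$ on the symmetric part of the model with the character theory of $G(r,n)$ and the Robinson--Schensted correspondence. First I would observe that since $G(r,n)$ has no antisymmetric absolute involutions, every $v$ appearing in $M(r,1,1,n)$ is symmetric, so the action \eqref{action} always takes the form $\varrho(g)(C_v)=\zeta_r^{\langle g,v\rangle}(-1)^{\inv_v(g)}C_{|g|v|g|^{-1}}$. The key structural point is that the $S_n$-conjugacy class $c$ is stable under absolute conjugation by any $g\in G(r,n)$: indeed $|g|v|g|^{-1}$ depends on $g$ only through $|g|\in S_n$, so $M(c)$ is genuinely a $G(r,n)$-submodule and the decomposition statement makes sense.

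The main computational step is to determine the character $\chi_c$ of $M(c)$ and compare it, class by class, with $\sum_{\lambda\in\Sh(c)}\chi_{\rho_\lambda}$. To compute $\chi_c(g)=\tr\varrho(g)|_{M(c)}$ I would note that only those basis elements $C_v$ with $v\in c$ fixed by absolute conjugation, i.e. $|g|v|g|^{-1}=v$, contribute, and each such $v$ contributes the scalar $\zeta_r^{\langle g,v\rangle}(-1)^{\inv_v(g)}$. The plan is then to recognize this as a fixed-point sum that matches the corresponding decomposition for the symmetric group. Concretely, I would pass through the Robinson--Schensted correspondence: since $M(r,1,1,n)$ is already known (Theorem \ref{modello}) to be a Gelfand model, it decomposes as $\bigoplus_{\lambda\in\Fer(r,n)}\rho_\lambda$ over the whole space, and each $\rho_\lambda$ corresponds to the pair of tableaux of shape $\lambda$; the absolute involutions $v$ with $\Sh(v)=\lambda$ are exactly those mapping to $(P,P)$ for $P\in\St_\lambda$. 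The assignment $v\mapsto\Sh(v)$ therefore sorts the basis of $M(r,1,1,n)$ by shape, and the claim reduces to showing that the span of $\{C_v:v\in c\}$ carries precisely the representations $\rho_\lambda$ with $\lambda\in\Sh(c)$, each once.

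To make this precise I would proceed by induction on the structure of $c$ and reduce to the already-known symmetric-group case. The cleanest route is to exploit that the scalar $\zeta_r^{\langle g,v\rangle}$ factors the color data away from the underlying $S_n$-combinatorics: writing $g$ in window notation and separating the permutation part $|g|$ from the colors $z_i(g)$, the sign $(-1)^{\inv_v(g)}$ depends only on $(|g|,|v|)$, exactly as in the $r=1$ model of \cite{IRS}. I would then invoke Theorem \ref{wreath}'s analogue for $S_n$ (the result of \cite{IRS}) together with Proposition \ref{rapp di grn}, which expresses $\rho_\lambda$ as an induced product involving the color characters $\gamma_{n_i}^{\otimes i}$ and the Specht representations $\tilde\rho_{\lambda^{(i)}}$. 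The compatibility of the color-weighting in $\langle g,v\rangle$ with the $\gamma_{n_i}^{\otimes i}$ factors is what upgrades the symmetric-group statement to $G(r,n)$. The hard part, and the step I expect to be the main obstacle, is precisely this bookkeeping: verifying that, after decomposing a class $c$ according to how the colors are distributed among the cycles and fixed points of $v$, the induced-from-Young-subgroup structure of $\rho_\lambda$ matches the restriction of $M(c)$ to the corresponding $G(r,\nu)$, so that the multiplicities come out exactly one. Once this matching of induced modules is checked, the decomposition $M(c)\cong\bigoplus_{\lambda\in\Sh(c)}\rho_\lambda$ follows by comparing dimensions (each $|\St_\lambda|$ summed over $\Sh(c)$ equals $|c|$) and characters.
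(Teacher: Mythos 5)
The first thing to note is that the paper does not prove Theorem \ref{wreath} at all: it is quoted as the main result of \cite{CaFu}, and the present paper's contribution begins where this theorem ends. So your proposal must be judged as a free-standing proof attempt, and as such it has a genuine gap: every easy step is carried out, while the step that constitutes the actual content of the theorem is named but deferred. Concretely, you reduce everything to ``verifying that \dots the induced-from-Young-subgroup structure of $\rho_\lambda$ matches the restriction of $M(c)$ to the corresponding $G(r,\nu)$, so that the multiplicities come out exactly one,'' and you yourself flag this as the main obstacle. That verification \emph{is} the theorem, and no mechanism for performing it is given (the ``induction on the structure of $c$'' is never specified: induction on what quantity, with what inductive step?). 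Moreover, the reduction itself is shaky: restricting $M(c)$ to a Young subgroup and comparing with the induced description of $\rho_\lambda$ in Proposition \ref{rapp di grn} does not by itself control multiplicities of $\rho_\lambda$ in $M(c)$ as a $G(r,n)$-module. For that you would need either to exhibit $M(c)$ itself as a module induced from an explicitly described subgroup (this is how \cite{IRS} proceeds for $S_n$: the span of a conjugacy class of involutions is induced from a one-dimensional character of the centralizer of an involution) and then decompose that induced module, or to push the character computation through to the end.

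Two subsidiary claims are also asserted without justification, and neither is trivial. First, the dimension count $|c|=\sum_{\lambda\in\Sh(c)}|\St_\lambda|$ requires that the Robinson--Schensted correspondence restricted to the single $S_n$-class $c$ be a bijection onto $\bigcup_{\lambda\in\Sh(c)}\St_\lambda$; equivalently, that the sets $\Sh(c)$ of distinct classes are disjoint and that any absolute involution whose shape lies in $\Sh(c)$ already lies in $c$. This is a colored analogue of Sch\"utzenberger's theorem (number of fixed points of an involution equals number of odd columns of its shape); it is exactly \cite[Proposition 3.1]{CaFu}, quoted in Section \ref{Sym} of this paper, and it cannot be waved through. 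Second, your assertion that the color data ``factors away'' so that the computation reduces to the $r=1$ case of \cite{IRS} is not correct as stated: the character of $M(c)$ at $g$ equals $\sum \zeta_r^{\langle g,v\rangle}(-1)^{\inv_v(g)}$, summed over those $v\in c$ with $|g|v|g|^{-1}=v$, and the phases $\zeta_r^{\langle g,v\rangle}$ depend on the colors of both $g$ and $v$ in a way that interacts with the fixed-point condition; handling precisely this interaction, and matching it against the factors $\gamma_{n_i}^{\otimes i}$ of Proposition \ref{rapp di grn}, is where all the work lies. Note that, granting the Gelfand-model property of Theorem \ref{modello}, it would suffice to prove one containment (every irreducible constituent of $M(c)$ has shape in $\Sh(c)$, or the converse) together with the dimension identity above --- that is probably the cleanest way to organize a genuine proof --- but neither ingredient is actually established in your proposal.
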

The main target of this paper is to establish a result analogous to Theorem \ref{wreath} for all groups $G(r,p,q,n)$ which are involutory (Theorem \ref{verymain}). In this general context we also have the decomposition
$$
M=M^0\oplus M^1,
$$
where $M^0$ is the {symmetric submodule} of $M$, i.e. the submodule spanned by all elements $C_v$ indexed by symmetric absolute involutions, and $M^1$ is the {antisymmetric submodule} defined similarly. In fact, the main step in the description of the irreducible decomposition of the modules $M(c)$, will be the intermediate result that provides the irreducible decomposition of the symmetric and the antisymmetric submodules.

\section{An outline: the irreducible decomposition of $M(c)$ in type D}\label{Dn}

In this section we give an outline of the proofs of the main results in the special case $D_{n}=G(2,2,n)$. Here we may take advantage of some results which are already known in the literature, such as the description of the split conjugacy classes and of the split representations and their characters. 

 The irreducible representations of $B_{n}$ are indexed by elements $(\lambda,\mu)\in \Fer(2,n)$. If $(\lambda,\mu)\in \Fer(2,n)$ is such that $\lambda\neq \mu$ then the two representations $\rho_{(\lambda,\mu)}$ and $\rho_{(\mu,\lambda)}$, when restricted to $D_n$ are irreducible and isomorphic by Theorem \ref{irrepgrpqn}, and we denote this representation by $\rho_{[\lambda,\mu]}$.
 If $n=2m$ is even, Theorem \ref{irrepgrpqn} also implies that the irreducible representations of $B_{2m}$ of the form $\rho_{(\mu,\mu)}$, when restricted to $D_n$, split into two irreducible representations that we denote by $\rho^0_{[\mu,\mu]}$ and $\rho^1_{[\mu,\mu]}$.\\
 The conjugacy classes of $B_n$ contained in $D_n$ are those indexed by ordered  pairs of partitions $(\alpha,\beta)$, with $|\beta|\equiv 0 \mod 2$. They all do not split as $D_n$-conjugacy classes with the exception of those indexed by $(2\alpha, \emptyset)$, which split into two $D_n$-conjugacy classes that we denote by  $\mathrm{cl}_{2\alpha}^0$ and $\mathrm{cl}_{2\alpha}^1$ (and we make the convention that $\mathrm{cl}_{2\alpha}^0$ is the class containing all the elements $g$ belonging to the $B_n$-class labelled by $(2\alpha,\emptyset)$ and satisfying $z_i(g)=0$ for all $i\in[n]$).

 The characters of the unsplit representations are clearly the same as those of the corresponding representations of the groups $B_n$ (being the corresponding restrictions). The characters of the split representations are given by the following result (see \cite{Stembridge, Pf}).
\begin{lem}\label{splitchar}
Let $g\in D_{2m}$, and $\mu\vdash m$. Then
$$\chi_{[\mu,\mu]}^\epsilon(g)=\left\{\begin{array}{ll}\frac{1}{2}\chi_{(\mu,\mu)}(2\alpha, \emptyset)+(-1)^{\epsilon +\eta }2^{\ell(\alpha)-1}\chi_{\mu}(\alpha),&\textrm{ if }g\in \mathrm{cl}_{2\alpha}^\eta;\\ \frac{1}{2} \chi_{(\mu,\mu)}(g),&\textrm{ otherwise.} \end{array}\right.$$
where $\epsilon,\eta=0,1 $, $\chi_{(\mu,\mu)}$ is the character of the $B_{2m}$-representation $\rho_{(\mu,\mu)}$ and $\chi_{\mu}$ is the character of $S_{m}$ indexed by $\mu$.
\end{lem}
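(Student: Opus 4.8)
The plan is to pin down the two split characters by computing their sum and their difference separately. The sum is immediate from the second item of Theorem \ref{irrepgrpqn}: since $\mathrm{Res}^{B_{2m}}_{D_{2m}}\rho_{(\mu,\mu)}\cong\rho^0_{[\mu,\mu]}\oplus\rho^1_{[\mu,\mu]}$, for every $g\in D_{2m}$ we have $\chi^0_{[\mu,\mu]}(g)+\chi^1_{[\mu,\mu]}(g)=\chi_{(\mu,\mu)}(g)$. It therefore suffices to compute the difference $\delta\eqdef\chi^0_{[\mu,\mu]}-\chi^1_{[\mu,\mu]}$. Fixing any $t\in B_{2m}\setminus D_{2m}$, conjugation by $t$ is the outer automorphism of $D_{2m}$ interchanging the two constituents $\rho^0_{[\mu,\mu]}$ and $\rho^1_{[\mu,\mu]}$ (these are the two pieces permuted by the $\Gamma_2$-action of Theorem \ref{irrepgrpqn}), so $\chi^0_{[\mu,\mu]}(g)=\chi^1_{[\mu,\mu]}(tgt^{-1})$. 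Whenever the $B_{2m}$-class of $g$ does not split in $D_{2m}$, the elements $g$ and $tgt^{-1}$ are $D_{2m}$-conjugate, hence $\delta(g)=0$ and both characters equal $\tfrac12\chi_{(\mu,\mu)}(g)$. This settles the ``otherwise'' case and shows that $\delta$ is supported on the split classes $\mathrm{cl}^0_{2\alpha}\cup\mathrm{cl}^1_{2\alpha}$.

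To evaluate $\delta$ on a split class I would use the self-intertwiner produced by the isomorphism $\rho_{(\mu,\mu)}\cong\rho_{(\mu,\mu)}\otimes\sigma$, where $\sigma=\gamma_{2m}$ is the generator of the group $\Gamma_2$ of Theorem \ref{irrepgrpqn}, i.e. the linear character of $B_{2m}$ with kernel $D_{2m}$. This isomorphism exists precisely because $\mu^{(0)}=\mu^{(1)}=\mu$, so the shift \eqref{azione} fixes $[\mu,\mu]$. By Schur's lemma there is then an operator $S$ on the space of $\rho_{(\mu,\mu)}$ acting as $+1$ on the $\rho^0$-constituent and $-1$ on the $\rho^1$-constituent and satisfying $S\rho_{(\mu,\mu)}(g)S^{-1}=\sigma(g)\rho_{(\mu,\mu)}(g)$ for all $g\in B_{2m}$; consequently $\delta(g)=\tr\big(S\,\rho_{(\mu,\mu)}(g)\big)$ for $g\in D_{2m}$. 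Using Proposition \ref{rapp di grn} I would realize $\rho_{(\mu,\mu)}=\Ind_{B_m\times B_m}^{B_{2m}}\big(\tilde\rho_\mu\odot(\gamma_m\otimes\tilde\rho_\mu)\big)$ and take $S$ to be the standard intertwiner attached to the block-swap $w\in S_{2m}$, twisted by $\sigma$; here one checks that $\sigma|_{B_m\times B_m}\otimes\theta\cong{}^w\theta$ for $\theta=\tilde\rho_\mu\odot(\gamma_m\otimes\tilde\rho_\mu)$, which is exactly the identity (using $\gamma_m^2=1$) that makes $S$ well defined.

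The computation of $\tr\big(S\,\rho_{(\mu,\mu)}(g)\big)$ is then a Mackey/Frobenius fixed-point count on the cosets $B_{2m}/(B_m\times B_m)$: only the cosets $xH$ with $xgx^{-1}$ lying in the block-swapping coset $w(B_m\times B_m)$ contribute. For $g\in\mathrm{cl}^\eta_{2\alpha}$ — an element all of whose cycles have even length $2\alpha_1,\dots,2\alpha_{\ell(\alpha)}$ and trivial color — these are precisely the configurations in which $w$ aligns the two halves of each even cycle; each such cycle ``folds'' to a cycle of length $\alpha_i$, contributing a factor $2$ from the two admissible alignments of its halves across the blocks and thereby producing $2^{\ell(\alpha)}$, while the residual trace of $\theta$ on the folded element assembles into the symmetric-group value $\chi_\mu(\alpha)$. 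The character $\sigma$ together with the convention $z_i(g)=0$ on $\mathrm{cl}^0_{2\alpha}$ fixes the global sign as $(-1)^\eta$, giving $\delta(g)=(-1)^\eta 2^{\ell(\alpha)}\chi_\mu(\alpha)$; combined with the sum relation this yields the stated formula, the $\tfrac12$ coming from the sum and the term $(-1)^{\epsilon+\eta}2^{\ell(\alpha)-1}\chi_\mu(\alpha)$ from $\tfrac12\delta$.

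The main obstacle is exactly this last fixed-point evaluation: one must run the Mackey decomposition keeping track of the color twist $\gamma_m$, confirm that only the even-cycle classes $(2\alpha,\emptyset)$ survive, and verify that the combined contribution of each cycle is precisely $2$ with the correct sign, so as to recover $2^{\ell(\alpha)}\chi_\mu(\alpha)$ rather than a twisted variant, and to align the labelling of $\rho^0,\rho^1$ with the convention distinguishing $\mathrm{cl}^0_{2\alpha}$ from $\mathrm{cl}^1_{2\alpha}$. Since the statement is classical, an alternative is simply to read it off the character tables of the Weyl groups of type $D$ in \cite{Stembridge, Pf}; the Clifford-theoretic derivation above indicates how it can be made self-contained within the present framework.
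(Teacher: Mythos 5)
Your proposal is correct and follows essentially the same route as the paper: although the paper formally cites \cite{Stembridge, Pf} for this lemma, its own generalization (Proposition \ref{mu general case}) is proved exactly by your method — the sum of the two split characters read off from the restriction in Theorem \ref{irrepgrpqn}, and the difference computed as the trace of a normalized associator $S$ on $\rho_{(\mu,\mu)}\cong\Ind_{G(r,n')\times G(r,n')}^{G(r,n)}\bigl(\rho_\mu\odot(\delta\otimes\rho_\mu)\bigr)$ via a fixed-point count over the cosets (two admissible alignments per even cycle giving $2^{\ell(\alpha)}\chi_\mu(\alpha)$), with the sign on $\mathrm{cl}^1_{2\alpha}$ coming from the twisting relation $\Delta^1_{\mu,\mu}(ghg^{-1})=\delta(g)\Delta^1_{\mu,\mu}(h)$. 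Your argument is precisely the $r=p=2$ specialization of that proof, including the labelling convention that identifies $\rho^0_{[\mu,\mu]}$ with the $+1$-eigenspace of $S$.
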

In Section \ref{splitclasses} we prove a generalization of this result valid for all groups $G(r,p,n)$ such that $\GCD(p,n)=2$.

An antisymmetric element in $B_{2m}$ is necessarily the product of cycles of length 2 and color 1, i.e. cycles of the form $(a^0,b^1)$. It follows that the antisymmetric elements of $B_{2m}$, and hence also those of $B_{2m}/\pm I$, are all $S_n$-conjugate. This is a special feature of this case and is not true for generic involutory reflection groups (see Section \ref{anclas}). We denote by $c^1$ the unique $S_n$-conjugacy class of antisymmetric absolute involutions in $B_{2m}/\pm I$ and we will now find out which of the irreducible representations of $D_{2m}$ are afforded
by the antisymmetric submodule $M^1$, which coincides in this case with $M(c^1)$.
The crucial observation  is the following result, which is a straightforward consequence of Lemma \ref{splitchar}.
\begin{remark}\label{lambda} Let $g\in D_{2m}$. Then

\begin{equation}
\sum_{\mu\vdash m}(\chi_{[\mu,\mu]}^0-\chi_{[\mu,\mu]}^1)(g)=\left\{ \begin{array}{ll}
                                                                                 (-1)^\eta 2^{\ell(\alpha)}\sum_{\mu\vdash m}\chi_{\mu}(\alpha),&  \mbox{if } g\in \mathrm{cl}_{2\alpha}^\eta;\\
0,& \textrm{otherwise.}
                                                                                 \end{array}\right.
\end{equation}

\end{remark}

The main result here is the following.
\begin{thm}\label{splitD2m}
Let $c^1$ be the $S_n$-conjugacy class consisting of the antisymmetric involutions in $D_n^*=B_n/\pm I$. Then
 $$M(c^1)\cong \bigoplus_{\mu\vdash m}\rho_{[\mu,\mu]}^1.$$
\end{thm}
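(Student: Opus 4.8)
The plan is to identify the $D_{2m}$-module $M(c^1)$ by computing its character and matching it against the known split characters from Lemma \ref{splitchar}. Since $M(c^1) = M^1$ is the antisymmetric submodule spanned by the basis vectors $C_v$ with $v$ an antisymmetric absolute involution, the action is given by the second line of \eqref{action}, namely $\varrho(g)(C_v) = \zeta_r^{\langle g,v\rangle}\zeta_r^{a(g,v)}C_{|g|v|g|^{-1}}$ with $r=2$. First I would write down the character $\chi_{M(c^1)}(g)$ as a sum over the fixed basis vectors: only those $v$ with $|g|v|g|^{-1}=v$ contribute, and each such $v$ contributes the scalar $(-1)^{\langle g,v\rangle + a(g,v)}$. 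The target is to show this equals $\sum_{\mu\vdash m}\chi^1_{[\mu,\mu]}(g)$.

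The key computational step is to evaluate $\chi_{M(c^1)}(g)$ explicitly on each conjugacy class of $D_{2m}$ and compare with the right-hand side predicted by Remark \ref{lambda} and Lemma \ref{splitchar}. A natural route is to exploit the known decomposition of the full symmetric-plus-antisymmetric model: by Theorem \ref{modello}, $M = M^0 \oplus M^1$ is a Gelfand model, so it affords every irreducible of $D_{2m}$ exactly once. The unsplit representations $\rho_{[\lambda,\mu]}$ (with $\lambda\neq\mu$) and all of the split pairs $\rho^0_{[\mu,\mu]}, \rho^1_{[\mu,\mu]}$ each appear once in $M$. Thus $\chi_M = \sum_{[\lambda,\mu],\,\lambda\neq\mu}\chi_{[\lambda,\mu]} + \sum_{\mu\vdash m}(\chi^0_{[\mu,\mu]} + \chi^1_{[\mu,\mu]})$. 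I would argue separately that the symmetric submodule $M^0 = M(c^0)$ accounts for the unsplit representations together with the $\epsilon=0$ halves, i.e. $\chi_{M^0} = \sum_{\lambda\neq\mu}\chi_{[\lambda,\mu]} + \sum_\mu \chi^0_{[\mu,\mu]}$; this should follow from the $G(r,n)$ result (Theorem \ref{wreath}) applied through the symmetric part, or more directly by the character computation on $M^0$. Subtracting then isolates $\chi_{M^1} = \sum_\mu \chi^1_{[\mu,\mu]}$, which is exactly the claim.

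Alternatively, and perhaps more cleanly, I would combine a direct dimension/character count with Remark \ref{lambda}. The difference $\sum_\mu(\chi^0_{[\mu,\mu]}-\chi^1_{[\mu,\mu]})$ is supported, by Remark \ref{lambda}, only on the split classes $\mathrm{cl}^\eta_{2\alpha}$, where it equals $(-1)^\eta 2^{\ell(\alpha)}\sum_\mu\chi_\mu(\alpha)$. So if I can compute $\chi_{M^0}-\chi_{M^1}$ independently and show it agrees with this expression, I am done by combining with the total $\chi_{M^0}+\chi_{M^1}=\chi_M$. Computing $\chi_{M^0}(g)-\chi_{M^1}(g)$ directly amounts to summing $(-1)^{\langle g,v\rangle+\inv_v(g)}$ over fixed symmetric $v$ minus $(-1)^{\langle g,v\rangle + a(g,v)}$ over fixed antisymmetric $v$; the antisymmetric fixed points exist only when $g$ lies in a suitable class, and the factor $2^{\ell(\alpha)}$ should emerge from counting the antisymmetric involutions fixed by $g$ together with the sign contributions.

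The main obstacle I anticipate is the explicit evaluation of $\chi_{M^1}(g) = \sum_{v\in c^1,\ |g|v|g|^{-1}=v}(-1)^{\langle g,v\rangle+a(g,v)}$ on the split classes $\mathrm{cl}^\eta_{2\alpha}$, where the nonzero contributions concentrate and where the sign $(-1)^\eta$ and the power $2^{\ell(\alpha)}$ must be reconciled with the color pairing $\langle g,v\rangle$ and the quantity $a(g,v)$. Since antisymmetric involutions in $B_{2m}$ are products of cycles $(a^0,b^1)$, for $g$ to fix such a $v$ under $|g|$-conjugation the permutation $|g|$ must preserve the pairing induced by $v$, and one must track carefully how the colors $z_i(\tilde g)$ interact with $z_i(\tilde v)$ in the exponent. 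Getting the normalization and the $\eta$-dependence exactly right, so that the computed character matches Remark \ref{lambda} on the nose rather than up to a global sign or a factor of two, is the delicate point; everything else is bookkeeping over the fixed-point structure.
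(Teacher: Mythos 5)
Your opening plan (compute $\chi_{M(c^1)}$ as a signed fixed-point sum and match it against Lemma \ref{splitchar}) is the right general idea, but both concrete routes you offer for organizing that computation break down, and for the same underlying reason: you try to use the symmetric submodule $M^0$ as the comparison object. Your ``cleaner'' second route rests on the identity that $\chi_{M^0}-\chi_{M^1}$ agrees with the expression in Remark \ref{lambda}, and this is simply false: evaluate at $g=1$, which lies in an \emph{unsplit} class (all its cycles have length $1$, hence odd length), so the right-hand side is $0$, while the left-hand side is $\dim M^0-\dim M^1$, the difference between the numbers of symmetric and antisymmetric absolute involutions, which is strictly positive (for $m=1$ it is $3-1=2$). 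The identity that is actually true must carry the unsplit characters $\sum_{\lambda\neq\mu}\chi_{[\lambda,\mu]}$ on the right-hand side as well, and proving \emph{that} identity already requires knowing which of the two split labels sits inside $M^0$ --- which is precisely the content of the theorem you are trying to prove. Your first route has the same circularity in a different guise: the claim $\chi_{M^0}=\sum_{\lambda\neq\mu}\chi_{[\lambda,\mu]}+\sum_\mu\chi^0_{[\mu,\mu]}$ cannot ``follow from Theorem \ref{wreath} applied through the symmetric part,'' because restricting the $B_n$-modules given by Theorem \ref{wreath} to $D_n$ and applying Theorem \ref{irrepgrpqn} only shows that each split restriction contributes $\rho^0_{[\mu,\mu]}\oplus\rho^1_{[\mu,\mu]}$; nothing in the wreath-product result distinguishes the two labels, which are pinned down only by the character values of Lemma \ref{splitchar} on the split classes. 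Indeed the paper's logical order is the opposite of yours: the antisymmetric statement is proved first, and it is then used (in the proof of Theorem \ref{mainsym}) to exclude the representations $\rho^1_{[\lambda]}$ from the symmetric classes.

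The missing idea that makes the computation tractable is to introduce a \emph{second, auxiliary action on the antisymmetric space itself}, never touching $M^0$. The paper defines, on the same vector space $M(c^1)$,
$$\phi^0(g)(C_v)=(-1)^{\langle g,v\rangle}C_{|g|v|g|^{-1}},\qquad \phi^1(g)(C_v)=(-1)^{\langle g,v\rangle}(-1)^{a(g,v)}C_{|g|v|g|^{-1}},$$
where $\phi^1=\varrho|_{M(c^1)}$ and $\phi^0$ is obtained by dropping the cocycle factor $(-1)^{a(g,v)}$. The difference $\chi_{\phi^0}-\chi_{\phi^1}$ is then a fixed-point sum weighted by $1-(-1)^{a(g,v)}$, which one shows equals $\sum_{\mu\vdash m}(\chi^0_{[\mu,\mu]}-\chi^1_{[\mu,\mu]})$ as computed in Remark \ref{lambda} (here the identification of $\sum_\mu\chi_\mu$ with the character of a Gelfand model of $S_m$ supplies the $2^{\ell(\alpha)}\sum_\mu\chi_\mu(\alpha)$ term). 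From that single identity, linear independence of irreducible characters gives containments $\bigoplus_\mu\rho^\epsilon_{[\mu,\mu]}\subseteq\phi^\epsilon$, and the dimension count of Proposition \ref{casym}, $\sum_\mu\dim\rho^\epsilon_{[\mu,\mu]}=|c^1|=\dim\phi^\epsilon$, upgrades them to isomorphisms. Without this device (or an honest direct evaluation of the signed fixed-point sums on the classes $\mathrm{cl}^\eta_{2\alpha}$, which is exactly the step you defer in your last paragraph), the proposal does not close.
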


\begin{proof}We present here a sketch of the proof only since this result will be generalized and proved in full detail in Section \ref{Asym}.
   
Consider the two representations $\phi^0$ and $\phi^1$ of $D_{2m}$ on the vector space $M(c^1)$ given by 
$$\phi^0(g)(C_v)\eqdef(-1)^{\langle g,v\rangle}C_{|g|v|g|^{-1}},  \qquad \phi^1(g)(C_v)\eqdef(-1)^{\langle g,v\rangle}(-1)^{a(g,v)}C_{|g|v|g|^{-1}}$$
(notice that $\phi^1(g)=\varrho(g)|_{M(c^1)}$).
We will simultaneously prove that $$\chi_{\phi^0}=\sum_{\mu\vdash m}\chi^0_{[\mu,\mu]} \quad \mbox{ and }\quad \chi_{\phi^1}=\sum_{\mu\vdash m}
\chi^1_{[\mu,\mu]},$$
the latter equality being equivalent to the statement that we have to prove.
To this end, we observe that it will be enough to show that
\begin{equation}\label{agl}
  \chi_{\phi^0}-\chi_{\phi^1}= \sum_{\mu\vdash m}(\chi_{[\mu,\mu]}^0-\chi_{[\mu,\mu]}^1).
\end{equation}
In fact if \eqref{agl} is satisfied, we have

\begin{equation}
\chi_{\phi^1}+\sum_{\mu\vdash m}\chi^0_{[\mu,\mu]}=
\chi_{\phi^0}+\sum_{\mu\vdash m}\chi^1_{[\mu,\mu]}.
\end{equation}
Now, since the irreducible characters are linearly independent, it follows that $\phi^0$ has a subrepresentation isomorphic to $\oplus\rho_{[\mu,\mu]}^0$, and similarly for $\phi^1$. By Theorem \ref{irrepgrpqn} and Proposition \ref{casym} we also have that

\begin{equation*}
\sum_{\mu\vdash m}\dim(\rho^0_{[\mu,\mu]})=\sum_{[\mu,\mu]\in \Fer(2,1,2,2m)}|\St_{[\mu, \mu]}|=|c^1| =\dim (\phi^0),
\end{equation*}
and, analogously, $\sum_{\mu\vdash m}\dim(\rho^1_{[\mu,\mu]})=\dim (\phi^1)$, and we are done.

To prove Equation \eqref{agl}, one has to compute explicitly the difference  $\chi_{\phi^0}-\chi_{\phi^1}$ and show that this agrees with the right-hand side of Equation \eqref{lambda}. To this end we will need to observe that $\sum {\chi_{\mu}}$ is actually the character of a Gelfand model of the symmetric group $S_m$, which has an already known combinatorial interpretation (see, e.g., \cite[Proposition 3.6]{Ca2}).
\end{proof}

Let us now consider a class $c$ of symmetric involutions in $D_n^*=B_n/\pm I$ (note that in this case an absolute involution is actually an involution since all the involved matrices are real). The lift of $c$ to $B_n$ is the union of two $S_n$-conjugacy classes $c_1$ and $c_2$ of $B_n$ that may eventually coincide.  Since we already know the irreducible decompositions of $M(c_1)$ and of $M(c_2)$ as  $B_n$-modules (by Theorem \ref{wreath}) and hence also as  $D_n$-modules (by Theorem \ref{irrepgrpqn}), the main point in the proof of the following result will be that $M(c)$ is actually isomorphic to a subrepresentation of the restriction of $M(c_1)\oplus M(c_2)$ to $D_n$,  together with straightforward applications of Theorems \ref{modello} and \ref{splitD2m}.

\begin{thm}Let $c$ be a $S_n$-conjugacy class of symmetric involutions in $D_n^*=B_n/\pm I$. Then
$$
M(c)\cong \bigoplus_{[\lambda,\mu]\in \Sh(c)}\rho^0_{[\lambda,\mu]}.
$$
\end{thm}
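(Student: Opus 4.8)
The plan is to establish the isomorphism by passing to the group $B_n$, where the decomposition is already known, and then carefully tracking what the restriction to $D_n$ does. First I would lift the $S_n$-conjugacy class $c$ of symmetric involutions in $D_n^*=B_n/\pm I$ to $B_n$. Since $c$ consists of symmetric elements and the map $B_n\to B_n/\pm I$ is two-to-one, the preimage of the elements indexing $M(c)$ splits into (at most) two $S_n$-conjugacy classes $c_1$ and $c_2$ of absolute involutions in $B_n$; these coincide exactly when multiplying every window entry's sign by $-1$ produces an $S_n$-conjugate element. By Theorem \ref{wreath} applied to $G(2,n)=B_n$, we know
\begin{equation*}
M(c_i)\cong\bigoplus_{(\lambda,\mu)\in\Sh(c_i)}\rho_{(\lambda,\mu)}
\end{equation*}
as $B_n$-modules, and by Theorem \ref{irrepgrpqn} we understand how each $\rho_{(\lambda,\mu)}$ restricts to $D_n$: unsplit when $\lambda\neq\mu$ (giving $\rho_{[\lambda,\mu]}$) and split into $\rho^0_{[\mu,\mu]}\oplus\rho^1_{[\mu,\mu]}$ when $\lambda=\mu$.

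The central structural observation, which I would state as the key lemma, is that the $D_n$-module $M(c)$ (with the symmetric action from Theorem \ref{modello}, where the scalar is $(-1)^{\inv_v(g)}$ since $r=2$) is isomorphic to a $D_n$-subrepresentation of $\mathrm{Res}^{B_n}_{D_n}\bigl(M(c_1)\oplus M(c_2)\bigr)$. Concretely, I would build an explicit $D_n$-equivariant embedding: each basis vector $C_v$ of $M(c)$, indexed by a symmetric involution $v\in D_n^*$, lifts to one or two basis vectors in $M(c_1)\oplus M(c_2)$, and I would send $C_v$ to a suitable (averaged or single, depending on whether the lift splits) combination of these, chosen so that the sign formula $(-1)^{\inv_v(g)}$ matches on both sides. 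The verification that this map intertwines the two $D_n$-actions is a direct check using that $\langle g,v\rangle$, $\inv_v(g)$, and the action by conjugation $|g|v|g|^{-1}$ are all computed identically in $B_n$ and in its quotient.

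Having realized $M(c)$ inside the restriction, I would then match up the constituents. The restriction $\mathrm{Res}^{B_n}_{D_n}\bigl(\bigoplus_i\bigoplus_{(\lambda,\mu)\in\Sh(c_i)}\rho_{(\lambda,\mu)}\bigr)$ decomposes, via Theorem \ref{irrepgrpqn}, into a sum of $\rho_{[\lambda,\mu]}$ (for $\lambda\neq\mu$) and $\rho^0_{[\mu,\mu]}\oplus\rho^1_{[\mu,\mu]}$ (for $\lambda=\mu$). The claim to prove is that $M(c)$ picks out exactly the $\rho_{[\lambda,\mu]}$ with $[\lambda,\mu]\in\Sh(c)$ together with the $\rho^0_{[\mu,\mu]}$ summands, and crucially \emph{not} the $\rho^1_{[\mu,\mu]}$ summands. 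The symmetric ones $\rho^0$ are selected because $M(c)$ is a symmetric module, while the antisymmetric $\rho^1$ components are excluded; here I would invoke Theorem \ref{splitD2m}, which identifies $\bigoplus_\mu\rho^1_{[\mu,\mu]}$ as precisely the antisymmetric submodule $M(c^1)$, disjoint from everything symmetric. A dimension count via Proposition \ref{casym} and $\dim(\rho^0_{[\mu,\mu]})=\dim(\rho^1_{[\mu,\mu]})=|\St_{[\mu,\mu]}|$ then forces the embedding of the previous paragraph to be onto the symmetric part, pinning the decomposition exactly.

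The main obstacle I anticipate is the bookkeeping around the diagonal shapes $[\mu,\mu]$: when a lifted class $c_i$ contains a symmetric involution whose RS-shape is of the form $(\mu,\mu)$, the corresponding $B_n$-irreducible $\rho_{(\mu,\mu)}$ restricts reducibly, and I must show that the embedding of $M(c)$ hits only the $+$-part $\rho^0_{[\mu,\mu]}$. Separating the $\rho^0$ from the $\rho^1$ requires more than dimension counting—it requires either an explicit character comparison (using that $M(c)\subseteq M^0$ and $M^1$ contributes only $\rho^1$'s, established in Theorem \ref{splitD2m}) or identifying an eigenvector/sign invariant distinguishing the two halves. This is exactly the delicate interplay between the two decompositions $M=\bigoplus_c M(c)$ and $M=M^0\oplus M^1$ flagged in the introduction, and getting the symmetric-versus-antisymmetric selection right, rather than the gross multiplicities, is where the real work lies.
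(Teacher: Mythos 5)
Your strategy coincides with the paper's (which proves the general case as Theorem \ref{mainsym} and only sketches type $D$ in Section \ref{Dn}): lift $c$ to the classes $c_1,c_2$ in $B_n$, apply Theorems \ref{wreath} and \ref{irrepgrpqn}, realize $M(c)$ as a $D_n$-subrepresentation of $\mathrm{Res}^{B_n}_{D_n}(M(c_1)\oplus M(c_2))$ --- your averaging map $C_v\mapsto C_{\tilde v}+C_{-\tilde v}$ is precisely the splitting of the paper's quotient construction in Lemma \ref{quotient}, whose kernel is spanned by the differences $C_{\tilde v}-C_{-\tilde v}$ --- and then exclude the constituents $\rho^1_{[\mu,\mu]}$ by combining Theorem \ref{splitD2m} with the fact that the Gelfand model affords each irreducible exactly once. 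Up to that point your argument is correct and matches the paper.

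The gap is in your closing step, where you assert that ``a dimension count via Proposition \ref{casym}\dots forces the embedding of the previous paragraph to be onto the symmetric part.'' Two problems. First, the embedding is never onto the symmetric part of the restriction when $\Sh(c)$ contains a shape $[\lambda,\mu]$ with $\lambda\neq\mu$: in that case $\mathrm{Res}^{B_n}_{D_n}(M(c_1)\oplus M(c_2))$ contains \emph{two} copies of the unsplit (hence symmetric) representation $\rho_{[\lambda,\mu]}$, coming from $\rho_{(\lambda,\mu)}$ and $\rho_{(\mu,\lambda)}$, while the multiplicity-free module $M(c)$ can meet only one of them. Second, Proposition \ref{casym} counts \emph{antisymmetric} elements, so it gives no control on $|c|=\dim M(c)$ for a symmetric class; the identity your count would actually need, namely $|c|=\sum_{[\lambda,\mu]\in\Sh(c)}\dim\rho^0_{[\lambda,\mu]}$, is not available at this stage --- given the containment you have already established, it is equivalent to the theorem itself. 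The paper closes the argument by a different, global observation: distinct symmetric classes $c\neq c'$ have disjoint shape sets $\Sh(c)\cap\Sh(c')=\emptyset$ (the type of a symmetric involution determines, and is determined by, the sizes and column-parity data of its RS shape, cf.\ Section \ref{Sym}), and by Theorems \ref{modello} and \ref{splitD2m} the symmetric submodule $M^0=\bigoplus_c M(c)$ must afford every $\rho^0_{[\lambda,\mu]}$ exactly once; since each $M(c)$ sits inside $\bigoplus_{[\lambda,\mu]\in\Sh(c)}\rho^0_{[\lambda,\mu]}$, disjointness leaves each such $\rho^0_{[\lambda,\mu]}$ no other place to occur, forcing every containment to be an equality. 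Adding this disjointness argument (or, alternatively, an honest per-class cardinality computation, which Proposition \ref{casym} does not supply) repairs your proof.
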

In Section \ref{Sym} one can find an explicit simple combinatorial description of the sets $\Sh(c)$, for any symmetric $S_n$-conjugacy class $c$ of absolute involutions in $G(r,p,n)^*$. This is illustrated in the following example. 
\begin{exa}
Let $v\in B_6/\pm I$ be given by $v=[6^1,4^0,3^0,2^0,5^1,1^1]$. Then the $S_n$-conjugacy class $c$ of $v$, has 90 elements and the decomposition of the $D_n$-module $M(c)$ is given by all representations $\rho^0_{[\lambda,\mu]}$, $[\lambda,\mu]\in \Fer(2,1,2,6)$, where both $\lambda$ and $\mu$ are partitions of 3 and have exactly one column of odd length. Therefore

\setlength{\unitlength}{0.7pt}

$$
M(c)\cong \rho_{\left[\begin{picture}(24,10)(-1,-1)\put(0,8){\line(1,0){10}}\put(0,3){\line(1,0){10}}\put(0,-2){\line(1,0){5}}\put(0,-2){\line(0,1){10}}\put(5,-2){\line(0,1){10}}\put(10,3){\line(0,1){5}}\put(12,0){,}\put(17,8){\line(1,0){5}}\put(17,3){\line(1,0){5}}\put(17,-2){\line(1,0){5}}\put(17,-7){\line(1,0){5}}\put(17,-7){\line(0,1){15}}\put(22,-7){\line(0,1){15}}\end{picture}\right]}
\oplus
\rho^0_{\left[\begin{picture}(28,10)(-1,-1)\put(0,8){\line(1,0){10}}\put(0,3){\line(1,0){10}}\put(0,-2){\line(1,0){5}}\put(0,-2){\line(0,1){10}} \put(5,-2){\line(0,1){10}}\put(10,3){\line(0,1){5}}\put(12,0){,}\put(17,8){\line(1,0){10}}\put(17,3){\line(1,0){10}}\put(17,-2){\line(1,0){5}}\put(17,-2){\line(0,1){10}}\put(22,-2){\line(0,1){10}}\put(27,3){\line(0,1){5}}\end{picture}\right]} 
\oplus
\rho^0_{\left[\,\begin{picture}(19,10)(0,-1)\put(0,8){\line(1,0){5}}\put(0,3){\line(1,0){5}}\put(0,-2){\line(1,0){5}}\put(0,-7){\line(1,0){5}} \put(0,-7){\line(0,1){15}} \put(5,-7){\line(0,1){15}}\put(7,0){,}\put(12,8){\line(1,0){5}}\put(12,3){\line(1,0){5}}\put(12,-2){\line(1,0){5}}\put(12,-7){\line(1,0){5}}\put(12,-7){\line(0,1){15}}\put(17,-7){\line(0,1){15}}\end{picture}\right]}.
$$
Note in particular that in this case we obtain one unsplit representation and two split representations.
\end{exa}

\section{On the split conjugacy classes}\label{splitclasses}

In the more general case of any involutory reflection group $G(r,p,n)$, we have not been able to find the nature of the conjugacy classes that split from $G(r,n)$ to $G(r,p,n)$  in the literature. This is the content of the present section.

Let $r$ be even so that it makes sense to talk about even and odd elements in $\mathbb Z_r$. Let $c$ be a cycle in $G(r,n)$ of even length and even color. If $c=(i_1^{z_{i_1}},i_2^{z_{i_2}},\ldots,i_{2d}^{z_{i_{2d}}})$ we define the \emph{signature} of $c$ to be
$$
\sign(c)={z_{i_1}+z_{i_3}+\cdots+z_{i_{2d-1}}}={z_{i_2}+z_{i_4}+\cdots+z_{i_{2d}}}\in \mathbb Z_2,
$$
so that the signature can be either 0 or 1. If $g$ is a product of disjoint cycles of even length and even color we define the \emph{signature} $\sign(g)$ of $g$ as the sum of the signatures of its cycles.
\begin{lem}\label{signature}
 Let $r$ be even and let $c$ be a cycle in $G(r,n)$ of even length and even color. Let $h\in G(r,n)$. Then
$$
\sign(h^{-1}ch)=\sign(c)+{\sum_{j\in |h|^{-1}(\Supp(c))}z_j(h)}\in \mathbb Z_2.
$$
In particular, if $g\in G(r,n)$ is a product of cycles of even length and even color then
$$
\sign(h^{-1}gh)=\sign(g)+{z(h)}\in \mathbb Z_2.
$$
\end{lem}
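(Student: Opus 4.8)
The plan is to reduce the statement to a single formula describing how the colors of a cycle transform under conjugation, and then to exploit that the signature lives in $\mathbb{Z}_2$, where signs are irrelevant. Fix a cycle $c=(a_1^{z_{a_1}},\dots,a_{2d}^{z_{a_{2d}}})$, so that $|c|(a_j)=a_{j+1}$ with indices read modulo $2d$ and $z_{a_j}$ the color attached to $a_j$ in $c$. Since $|h^{-1}ch|=|h|^{-1}|c||h|$, the element $h^{-1}ch$ has a single nontrivial cycle, supported on $|h|^{-1}(\Supp(c))$, namely $(b_1^{w_1},\dots,b_{2d}^{w_{2d}})$ with $b_j\eqdef|h|^{-1}(a_j)$. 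A direct computation using the elementary rules for the color of a product and of an inverse in $G(r,n)$ (in particular $z_m(g^{-1})=-z_{|g|^{-1}(m)}(g)$) gives the colors of this conjugate cycle explicitly as
$$
w_j=z_{a_j}+z_{b_j}(h)-z_{b_{j+1}}(h)\in\mathbb{Z}_r .
$$
Note that the $h$-contributions telescope to $0$ around the whole cycle, so $h^{-1}ch$ again has even color (and evidently even length), whence its signature is well-defined.

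The core of the argument is the resulting computation of the signature. Summing $w_j$ over the odd positions $j=1,3,\dots,2d-1$ and reducing modulo $2$ gives, in $\mathbb{Z}_2$,
$$
\sign(h^{-1}ch)=\sum_{j\text{ odd}}w_j=\sign(c)+\sum_{j\text{ odd}}\bigl(z_{b_j}(h)-z_{b_{j+1}}(h)\bigr).
$$
The residual term is the alternating sum $z_{b_1}(h)-z_{b_2}(h)+z_{b_3}(h)-\cdots-z_{b_{2d}}(h)$, and the decisive observation is that modulo $2$ the signs disappear, so it collapses to $\sum_{j=1}^{2d}z_{b_j}(h)=\sum_{b\in|h|^{-1}(\Supp(c))}z_b(h)$, which is exactly the asserted correction term. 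Both hypotheses on $c$ enter here: even length guarantees that the $2d$ positions split evenly between the two alternating classes, while passing to $\mathbb{Z}_2$ is what turns the alternating sum into the full sum — over $\mathbb{Z}$ no such simplification holds. The even-color hypothesis, for its part, is precisely what makes $\sign$ independent of the chosen starting point of the cycle, so that the two expressions defining the signature agree throughout.

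For the ``in particular'' statement I would then sum the per-cycle identity over the disjoint cycles of $g$, all assumed of even length and even color. Their supports partition $[n]$, and since $|h|$ is a bijection the sets $|h|^{-1}(\Supp(c))$ partition $[n]$ as well; hence the correction terms add up to $\sum_{j\in[n]}z_j(h)=z(h)$, yielding $\sign(h^{-1}gh)=\sign(g)+z(h)$.

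The step I expect to be the real obstacle is not the mod-$2$ collapse, which is transparent once set up, but fixing the conjugation formula for $w_j$ with the correct indexing. One must be careful about the direction in which $|\cdot|$ composes and about whether colors are indexed by source or by image, since a wrong choice would replace $|h|^{-1}(\Supp(c))$ by $\Supp(c)$; these two sets yield genuinely different correction terms in the single-cycle identity, and agree only in the global statement, where one sums over all of $[n]$.
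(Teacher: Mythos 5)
Your proof is correct and takes essentially the same route as the paper's: the identical conjugation formula $w_j=z_{a_j}+z_{b_j}(h)-z_{b_{j+1}}(h)$ for the colors of the conjugated cycle, summation over the odd positions, and the mod-$2$ collapse of the alternating $h$-sum into the full sum over $|h|^{-1}(\Supp(c))$, with the global statement obtained by summing over cycles (whose supports do partition $[n]$, as even length excludes fixed points). The indexing subtlety you flag (whether the conjugate cycle lives on $|h|^{-1}(\Supp(c))$ or $|h|(\Supp(c))$, depending on the multiplication convention for window notation) is present in the paper's own proof in exactly the same form, and it is immaterial for the ``in particular'' statement, which is the one used later.
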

\begin{proof}
Let $|c|=(i_1,i_2,\ldots,i_{2d})$. If we let $\tau=|h|$ we have that $h^{-1}ch$ is a cycle and $|h^{-1}ch|=(\tau^{-1}(i_1),\ldots,\tau^{-1}(i_{2d}))$. Therefore
\begin{align*}
\sign(h^{-1}ch)&={\sum_{j\textrm{ odd}}
z_{\tau^{-1}(i_j)}(h^{-1}ch)}\\
&={\sum_{j\textrm{ odd}}
z_{\tau^{-1}(i_j)}(h)+z_{i_j}(c)-z_{\tau^{-1}(i_{j+1})}(h)}\\
&=\sign(c)+{\sum_{j\in |h|^{-1}(\Supp(c))}z_j(h)},
\end{align*}
where the sums in the first two lines are meant to be over all odd integers $j\in[2d].$
\end{proof}
It follows from Lemma \ref{signature} that the conjugacy classes $\mathrm{cl}_{\alpha}$ of $G(r,n)$ contained in $G(r,p,n)$, where $\alpha$ has the special form $\alpha=(2\alpha^{(0)},\emptyset,2\alpha^{(2)},\emptyset,\ldots,2\alpha^{(r-2)},\emptyset)$, split in $G(r,p,n)$ into (at least) two conjugacy classes, according to the signature. How about the $G(r,n)$-conjugacy classes of a different form? Do they split as $G(r,p,n)$-classes?

If $G$ is a group and $g\in G$ we denote by ${\rm cl}_G(g)$ the conjugacy class of $g$ and by $C_G(g)$ the centralizer of $g$ in $G$.
If $g\in G(r,p,n)$ then the $G(r,n)$-conjugacy class ${\rm cl}_{G(r,n)}(g)$ of $g$ splits into more than one $G(r,p,n)$-conjugacy class if and only if
$$
 \frac{|{\rm cl}_{G(r,n)}(g)|}{|{\rm cl}_{G(r,p,n)}(g)|}=\frac{\frac{|G(r,n)|}{|C_{G(r,n)}(g)|}}{\frac{|G(r,p,n)|}{|C_{G(r,p,n)}(g)|}}=
 \frac{[G(r,n):G(r,p,n)]}{[C_{G(r,n)}(g):C_{G(r,p,n)}(g)]}=\frac{p}{[C_{G(r,n)}(g):C_{G(r,p,n)}(g)]}>1
$$
 i.e., $${\rm cl}_{G(r,n)}(g)\mbox{ splits if and only if }[C_{G(r,n)}(g):C_{G(r,p,n)}(g)]<p.$$
The following proposition clarifies which conjugacy classes of $G(r,n)$ split in $G(r,p,n)$.

\begin{prop}
 Let $g \in G(r,p,n)$ and let ${\rm cl}(g)$ be its conjugacy class in the group $G(r,n)$. Then the following holds:
\begin{enumerate}
 \item if $\GCD(p,n)=1$, ${\rm cl}(g)$ does not split as a class of $G(r,p,n)$;
\item if $\GCD(p,n)=2$, ${\rm cl}(g)$ splits up into two different classes  of $G(r,p,n)$ if and only if all the cycles of $g$ have: 
\begin{itemize}                                                                                                                                                 \item even length;
\item even color,                                                                                                                                \end{itemize}
\end{enumerate}
i.e., if ${\rm cl}(g)=\mathrm{cl}_{(2\alpha^{(0)},\emptyset,2\alpha^{(2)},\emptyset,\ldots,2\alpha^{(r-2)},\emptyset)}$.

\end{prop}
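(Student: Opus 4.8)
The plan is to reduce the statement to the criterion already derived just above it, namely that $\mathrm{cl}_{G(r,n)}(g)$ splits if and only if the centralizer index $[C_{G(r,n)}(g):C_{G(r,p,n)}(g)]$ is strictly less than $p$, and then to compute this index directly in terms of the cycle structure of $g$. The key observation is that the index $[C_{G(r,n)}(g):C_{G(r,p,n)}(g)]$ equals the number of distinct values taken by the color homomorphism $z\colon C_{G(r,n)}(g)\to\mathbb Z_r$ modulo $p$, since $C_{G(r,p,n)}(g)$ is exactly the kernel of the composite map $C_{G(r,n)}(g)\to\mathbb Z_r\to\mathbb Z_p$. So the whole problem becomes: what is the image of $z$ restricted to the centralizer of $g$, reduced mod $p$?

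First I would describe the centralizer $C_{G(r,n)}(g)$ explicitly. An element of $G(r,n)$ commuting with $g$ permutes the cycles of $g$ of equal length and equal color among themselves, and acts on each fixed cycle by a combination of a cyclic rotation and an overall color shift. From this description one reads off which colors $z(h)$ can occur for $h\in C_{G(r,n)}(g)$. The crucial point, which I would extract from Lemma \ref{signature}, is that for a cycle $c$ of even length and even color the quantity $z(h)$ contributed by the ``internal'' symmetries of $c$ and by permutations swapping two such cycles is always \emph{even}: indeed Lemma \ref{signature} shows that conjugating by such $h$ changes the signature by $z(h)$, and since conjugation must preserve the cycle type (and in particular the signature must remain well-defined on the centralizer), the colors coming from centralizing elements are constrained to the even residues. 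Conversely, if $g$ has any cycle of odd length, or any cycle of odd color (equivalently, any cycle not of the listed form), one can exhibit an $h\in C_{G(r,n)}(g)$ with $z(h)$ odd, for instance a suitable rotation of a single cycle or a color shift supported on an odd-color cycle.

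This dichotomy gives the computation of the index. When $\GCD(p,n)=2$ (so $p$ is even and $2\mid n$) and \emph{every} cycle of $g$ has even length and even color, the image of $z$ on $C_{G(r,n)}(g)$, reduced mod $p$, consists only of even residues and hence has index $2$ in $\mathbb Z_p$; thus $[C_{G(r,n)}(g):C_{G(r,p,n)}(g)]=p/2<p$ and the class splits into two. If $g$ has at least one cycle violating these conditions, then $z$ attains an odd value on the centralizer, so its image mod $p$ is all of $\mathbb Z_p$, the index equals $p$, and no splitting occurs. This settles case (2). For case (1), when $\GCD(p,n)=1$, I would argue that the same analysis forces the image of $z$ mod $p$ to be everything: either $g$ has an odd-color or odd-length cycle (giving an odd color, and hence, together with the constraint $\GCD(p,n)=1$, the full group $\mathbb Z_p$), or all cycles are even-even, but then the support has even size, forcing $2\mid n$, contradicting $\GCD(p,n)=1$ unless $p$ is odd — and when $p$ is odd the even residues already generate all of $\mathbb Z_p$. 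In every subcase the index is $p$ and there is no splitting.

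The main obstacle I anticipate is the bookkeeping in the centralizer computation: one must carefully track the color contributions of the three types of centralizing symmetries (rotations within a fixed cycle, color shifts on a fixed cycle, and permutations exchanging equal cycles) and verify that the \emph{image} of $z$ mod $p$ is exactly the subgroup of even residues precisely in the even-length even-color case. The parity statement of Lemma \ref{signature} is the right tool to control the rotation and swap contributions, but handling the color-shift contributions — and confirming that no combination produces an odd residue when all cycles are even-even, while some combination does produce one otherwise — is where the argument needs to be made with care. Once the image of $z$ is pinned down, the splitting criterion via the centralizer index yields the result immediately.
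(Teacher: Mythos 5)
Your strategy is the paper's own: reduce to the criterion that $\mathrm{cl}_{G(r,n)}(g)$ splits if and only if $[C_{G(r,n)}(g):C_{G(r,p,n)}(g)]<p$, observe that this index is the size of the image of the color homomorphism $z \bmod p$ restricted to $C_{G(r,n)}(g)$, use Lemma \ref{signature} to show that this image lies in the even residues when all cycles of $g$ have even length and even color, and exhibit odd-color centralizing elements otherwise. But there is a genuine gap running through all three of your concluding computations: you only ever bound the image of $z\bmod p$ from \emph{above} (or exhibit a single element of it), and you never produce the lower bound needed to compute the index exactly. In case (2), ``consists only of even residues and hence has index $2$ in $\mathbb Z_p$'' is a non sequitur: containment in the even residues gives index \emph{at least} $2$, i.e.\ at least two classes, while the statement asserts exactly two. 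In the converse direction, ``$z$ attains an odd value on the centralizer, so its image mod $p$ is all of $\mathbb Z_p$'' is false as a deduction: for $p=6$ the subgroup $\langle 3\rangle=\{0,3\}$ contains an odd element but is proper, so an odd value alone does not force index $p$. And in case (1) the claim that an odd color together with $\GCD(p,n)=1$ yields the full group is unjustified for the same reason, while the fallback ``when $p$ is odd the even residues already generate all of $\mathbb Z_p$'' presupposes that all even residues lie in the image, which you have not shown (and for $p$ odd ``even residue'' is not even a well-defined subgroup condition).

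All three gaps are repaired by the one ingredient your proposal never mentions, and which is precisely where the hypothesis $\GCD(p,n)=1,2$ enters the paper's argument: the scalar matrices $\zeta_r^a\Id$ lie in \emph{every} centralizer and have color $an$, so by B\'ezout the image of $z\bmod p$ always contains the subgroup $\langle \GCD(n,p)\rangle\subseteq\mathbb Z_p$. When $\GCD(p,n)=1$ this is all of $\mathbb Z_p$, settling case (1) with no cycle analysis whatsoever. When $\GCD(p,n)=2$ it is exactly the set of even residues; combined with your signature argument this pins the image to be \emph{exactly} the evens in the even-even case (hence exactly two classes), and combined with one odd element it generates all of $\mathbb Z_p$ otherwise (a subgroup properly containing an index-two subgroup is everything), hence no splitting. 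Within your own framework the scalar is recoverable as the product of the color shifts over all cycles, but the argument must be made. A smaller slip: your witnesses are crossed. For a cycle of \emph{odd color} the correct element is the cycle itself (a rotation, whose color as a group element is the cycle's color), whereas for a cycle of \emph{odd length} it is the color shift $\zeta_r\Id$ on its support (whose color is the cycle's length); the element you propose, ``a color shift supported on an odd-color cycle,'' has color equal to that cycle's length, which may well be even.
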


\begin{proof}Let $G=G(r,n)$ and $H=G(r,p,n)$.
We first make a general observation. If $C_{G}(g)$ contains an element $x$ such that $z(x)\equiv 1 \mod p$, we can split the group $C_{G}(g)$ into cosets modulo the subgroup $\langle x\rangle$: in each coset there is exactly 1 element having color $0 \mod p$ every $p$ elements. Thus,  $$[C_G(g):C_H(g)]=p$$ and ${\rm cl}_G(g)$ does not split in $H$.

 Now let $\GCD(p,n)=1$. Thanks to B\'{e}zout identity, there exist $a,b$ such that $an+bp=1$, i.e. there exists $a$ such that the scalar matrix $\zeta_r^a\Id$ has color $1 \mod p$, so that ${\rm cl}_G(g)$ does not split thanks to the observation above.

Assume now that $\GCD(p,n)=2$. Arguing as above, there exist $a$, $b$ such that $ap+bn= 2$, so we know that $C_G(g)$ contains at least an element $\zeta_r^a\Id$ with color $2 \mod p$.

If there exists at least an element $x$ of odd color in $C_G(g)$, the matrix $(\zeta_r^a\Id)^i\cdot x$ has color $1$ for some $i$, so again ${\rm cl}_G(g)$ does not split in $H$.

On the other hand, if there are no elements of odd color in $C_G(g)$, every coset of $\langle \zeta_r^a\Id\rangle$ have exactly 1 element belonging to $G(r,p,n)$ out of $p'$ elements. 
Thus,  $$[C_G(g):C_H(g)]=p'$$ and ${\rm cl}(g)$ splits into $p/p'=2$ classes.

Let us see when this happens according the the cyclic structure of $g$.
     \begin{enumerate}

    \item  If $g$ has at least a cycle of odd color, say $c$, $c$ is in $C_G(g)$ and ${\rm cl}(g)$ does not split.

    \item  If $g$ has a cycle of odd length, say $c$, then $\zeta_r|c|$ has odd color and is in $C_G(g)$, so ${\rm cl}(g)$ does not split.

   \item  We are left to study the case of $g$ being a product of cycles all having even length and even color. Thanks to Lemma \ref{signature}, every element in $C_{G}(g)$ has even color, so by the above argument ${\rm cl}(g)$ splits into exactly two classes, and we are done.
     \end{enumerate}
\end{proof}

If $2\alpha=(2\alpha^{(0)},\emptyset,2\alpha^{(2)},\emptyset,\ldots,2\alpha^{(r-2)},\emptyset)$ is such that $\mathrm{cl}_{2\alpha}\subset G(r,p,n)$ (i.e. if $\sum 2i{\ell(\alpha_i)}\equiv 0 \mod p$) we denote by $\mathrm{cl}_{2\alpha}^0$ the $G(r,p,n)$-conjugacy class consisting of all elements in $\mathrm{cl}_{2\alpha}$ having signature $0$, and we similarly define $\mathrm{cl}_{2\alpha}^1$.

\section{The discrete Fourier transform}\label{DFT}

Recall from equation \eqref{azione} that there is an action of the cyclic group $\Gamma_p$ generated by $\gamma_n^{r/p}$ on the set of irreducible representations of $G(r,n)$. This action gives us the opportunity of introducing the concept of discrete Fourier transform, which will be essential in what follows about the case $\GCD(p,n)=2$. We will parallel and generalize in this section an argument due to Stembridge (\cite{Stembridge}, Sections 6 and 7B).

We recall, according to \cite{Stembridge}, the following

\begin{defn}
Let $\lambda\in \Fer(r,n)$, $(V, \rho_\lambda)$ be a concrete realization of the irreducible $G(r,n)$-representation $\rho_\lambda$ on the vector space $V$, and  $\gamma$ be a generator for $\stab_{\Gamma_p}(\rho_\lambda)$. An \emph{associator} for the pair $(V,\gamma)$  is an element $S\in GL(V)$ exhibiting an explicit isomorphism of $G(r,n)$-modules between
$$(V, \rho_{\lambda}) \quad \mbox{ and } \quad (V, \gamma \otimes \rho_{\lambda}).$$
By Schur's lemma $S^{m_p(\lambda)}$ is a scalar, and therefore $S$ can be normalized in such a way that $S^{m_p(\lambda)}=1$.
\end{defn}

Recall form  Theorem \ref{irrepgrpqn} that a representation $\rho_\lambda$ of $G(r,n)$ splits  into exactly $m_p(\lambda)$ irreducible representations of $G(r,p,n)$.
\begin{defn}
Let $\lambda\in \Fer(r,n)$ and let $S$ be an associator for the $G(r,n)$-module $(V, \rho_\lambda)$. Then the \emph{discrete Fourier transform} with respect to $S$ is the family of $G(r,p,n)$-class functions $\Delta^i_{\lambda}:G(r,p,n)\rightarrow \mathbb C^*$ given by
\begin{equation*}
 \Delta^i_{\lambda}(h):=\tr(S^i(h)|_V), i \in [0, m_p(\lambda)-1].
\end{equation*}
\end{defn}

A more accurate analysis of how the associator works, shows that the irreducible representations $\rho_{[\lambda]}^i$ are exactly the eigenspaces of the associator $S$, and we make the convention that, once an associator $S$ has been fixed, the representation $\rho_{[\lambda]}^i$ is the one afforded by the eigenspace of $S$ of eigenvalue $\zeta_{m_p(\lambda)}^i$. Therefore 
\begin{equation}\label{eccocosacivuole}
\Delta^i_\lambda(h)= \sum_{j=0}^{m_p(\lambda)-1}\zeta_{m_p(\lambda)}^{ij}\chi_{[\lambda]}^j(h),
\end{equation}
for all $h\in G(r,p,n)$, $\chi_{[\lambda]}^j$ being the character of the split representation $\rho_{[\lambda]}^j$ of $G(r,p,n)$.

Now let us consider a representation $\rho_{\lambda}$. Looking at the action described in \eqref{azione}, we see that $m_p(\lambda)=|\Stab_{\Gamma_p}(\rho_{\lambda})|=s$ only if $\lambda=(\lambda^{(0)},\ldots, \lambda^{(r-1)})$ consists of a smaller pattern repeated $s$ times. It follows that $m_p(\lambda)$ is necessarily a divisor of both $n$ and $p$. 

In particular, if $\GCD(p,n)=2$, $m_p(\lambda)=1,2$ and so the stabilizer of a representation with respect to $\Gamma_p$ can either be trivial or be $\{\Id,
\gamma_n^{r'}\}$. 
\begin{notaz}
From now on, when $\GCD(p,n)=2$ we use for the
representation $\gamma_n^{r'}(g)=(-1)^{z(g)}$ the
notation $\delta(g)$.
\end{notaz}
 When $\Stab_{\Gamma_p}(V)=\{\Id,\delta\}$,
we are dealing with representations of the form $\rho_{(\mu, \mu)}$, with
$\mu \in \Fer(r', n')$. Notice that $\mu$ may be considered as belonging to $\Fer(r',1, p', n')$: acting on $\mu$ with an element of $C_{p'}$ corresponds to acting on $(\mu, \mu)$ with an element of $C_{p}$, and we know that
elements of $\Fer(r,n)$ in the same class modulo $C_p$ parametrizes the same irreducible representation of $G(r,p,n)$.
These $\rho_{(\mu, \mu)}$ are the representations of $G(r,n)$ that split as
$G(r,p,n)$-modules. As in the case of $D_n$, they split into two
different representations that we denote by $\rho^0_{[\mu,\mu]}$   and $\rho_{[\mu, \mu]}^{1}$. We also denote by   $\chi^0_{[\mu, \mu]}$ and $\chi^1_{[\mu,\mu]}$ the corresponding characters. Then the discrete Fourier transform of
$\rho_{\mu, \mu}$ is given by the two functions
$$\Delta_{\mu, \mu}^0(h)=\chi^0_{[\mu, \mu]}(h)+\chi_{[\mu, \mu]}^1(h); \qquad \Delta_{\mu, \mu}^1(h)=\chi_{[\mu, \mu]}^0(h)-\chi_{[\mu, \mu]}^1(h).$$
In the remaining of this section, we exploit equation \eqref{eccocosacivuole} to provide an explicit computation of the difference characters $\Delta^1_{\mu, \mu}$ for every $G(r,p,n)$ with $(p,n)=2$. This computation will turn up to be of crucial importance in the proof of Theorem \ref{AsymGrpn}.

Recall that when $\GCD(p,n)=2$, the conjugacy classes of $G(r,n)$ of the form $\mathrm{cl}_{2\alpha}$  split into two distinct $G(r,p,n)$-classes $\mathrm{cl}_{2\alpha}^0$ and $\mathrm{cl}_{2\alpha}^1$, where $2\alpha=(2\alpha^{(0)},\emptyset,2\alpha^{(2)},\emptyset,\ldots,2\alpha^{(r-2)},\emptyset)$.

\begin{notaz}
In what follows, we  often need to compute class functions on $G(r,p,n)$. For this reason, it will be useful to fix one special element, that we call \emph{normal}, for each $G(r,p,n)$-conjugacy class. This will be done as follows.
If the conjugacy class is not of the form ${\rm cl}_{2\alpha}^1$, the normal element  $h$ is defined as follows:
\begin{itemize}
\item the elements of each cycle of $h$ are chosen in increasing order, from the cycles of smallest color to the cycles of biggest color;
\item in every cycle of color $i$, all the elements have color $0$ but the biggest one whose color is $i$.
\end{itemize}
If the class is of the form ${\rm cl}_{2\alpha}^1$ the normal element $h$ is defined similarly with the unique difference that if the cycle containing $n$ has color $2j$ then the color of $n$ is $2j-1$ and the color of $n-1$ is 1.
For example, if 
\setlength{\unitlength}{6pt}
$$
2\alpha=\left( \begin{picture}(18,1.7)(0,0.7)               
\put(0,2){\line(1,0){2}} \put(5,2){\line(1,0){4}}  \put(12,2){\line(1,0){4}} 
\put(0,1){\line(1,0){2}} \put(5,1){\line(1,0){4}}  \put(12,1){\line(1,0){4}}
\put(12,0){\line(1,0){2}}
\put(0,1){\line(0,1){1}}\put(1,1){\line(0,1){1}}\put(2,1){\line(0,1){1}}\put(6,1){\line(0,1){1}}\put(7,1){\line(0,1){1}}
\put(8,1){\line(0,1){1}}\put(9,1){\line(0,1){1}}\put(5,1){\line(0,1){1}}\put(12,0){\line(0,1){2}}\put(13,0){\line(0,1){2}}
\put(14,0){\line(0,1){2}}\put(15,1){\line(0,1){1}}\put(16,1){\line(0,1){1}}
\put(2.2,1){,}\put(3.8,1){,}\put(9.2,1){,}\put(10.8,1){,}\put(16.2,1){,}
\put(3,1){$\emptyset$}
\put(10,1){$\emptyset$}
\put(17,1){$\emptyset$}
\end{picture}\right)
$$
Then the normal element in ${\rm cl}_{2\alpha}^0$ is $(1,2)(3,4,5,6^2)(7,8,9,10^4)(11,12^4)$ and the normal element in ${\rm cl}_{2\alpha}^1$ is $(1,2)(3,4,5,6^2)(7,8,9,10^4)(11^1,12^3)$, where we have omitted all the exponents equal to 0.
\end{notaz}

\begin{prop}\label{mu general case}
Let $g\in G(r,p,n)$, and $\mu\in \Fer(r',n')$. Let $\chi_\mu$ denote the character of the representation of $G(r',n')$ indexed by $\mu$. Then
\begin{equation*}
\Delta_{\mu,\mu}^1(g)=\left\{\begin{array}{ll}
                                              (-1)^\epsilon 2^{\ell(\alpha)}\chi_\mu(\alpha^{(0)},\alpha^{(2)}, \ldots, \alpha^{({r-2})}), &\textrm{if }g \in \mathrm{cl}_{2\alpha}^{\epsilon};\\
                                             
                                              0, &\mbox{if $g$ belongs to an unsplit conjugacy class}
                                             \end{array}
\right.,
\end{equation*}
where, if $\alpha=(\alpha^{(0)},\emptyset,\alpha^{(2)}, \emptyset,\ldots, \alpha^{({r-2})},\emptyset)$,  $\ell(\alpha)=\sum \ell(\alpha^{(i)})$. 
 \end{prop}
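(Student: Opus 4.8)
The plan is to compute the difference character directly from its definition as a twisted trace. By equation \eqref{eccocosacivuole} and the discussion preceding it, once we fix an associator $S$ for $\rho_{\mu,\mu}$ normalized so that $S^2=1$, we have $\Delta^1_{\mu,\mu}(g)=\tr(S\,\rho_{\mu,\mu}(g))$, where $S$ realizes the isomorphism between $(V,\rho_{\mu,\mu})$ and $(V,\delta\otimes\rho_{\mu,\mu})$; equivalently $S\,\rho_{\mu,\mu}(h)\,S^{-1}=\delta(h)\,\rho_{\mu,\mu}(h)$ for every $h\in G(r,n)$, where $\delta(h)=(-1)^{z(h)}$.

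First I would dispose of the unsplit classes, which is the clean part. From the intertwining relation, a cyclic-trace manipulation gives, for any $x\in G(r,n)$, the transformation law $\Delta^1_{\mu,\mu}(xgx^{-1})=\delta(x)\,\Delta^1_{\mu,\mu}(g)=(-1)^{z(x)}\Delta^1_{\mu,\mu}(g)$. By the analysis of Section \ref{splitclasses}, the $G(r,n)$-class of $g$ fails to split in $G(r,p,n)$ precisely when $C_{G(r,n)}(g)$ contains an element $x$ of odd color. Choosing such an $x$ (so $xgx^{-1}=g$ and $\delta(x)=-1$) forces $\Delta^1_{\mu,\mu}(g)=-\Delta^1_{\mu,\mu}(g)$, hence $\Delta^1_{\mu,\mu}(g)=0$. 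This settles the second case of the statement.

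For the split classes $\mathrm{cl}_{2\alpha}^\epsilon$ I would construct $S$ explicitly. Realize $\rho_{\mu,\mu}$ as the induced representation of Proposition \ref{rapp di grn} from the Young subgroup $G(r,\nu)$ with $\nu=(n_0,\dots,n_{r-1})$ periodic of period $r'$, since the indexing tuple of $(\mu,\mu)$ satisfies $\lambda^{(i)}=\mu^{(i\bmod r')}$. Tensoring by $\delta$ shifts the tuple by $r'$, which fixes $(\mu,\mu)$, so the associator is induced from the element of $G(r,n)$ swapping the $i$-th and $(i+r')$-th blocks together with the color adjustment that makes it normalize $G(r,\nu)$ and conjugate the inducing character to its $\delta$-twist; I would then normalize so that $S^2=1$. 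Evaluating $\tr(S\,\rho_{\mu,\mu}(g))$ on the normal element of $\mathrm{cl}_{2\alpha}^\epsilon$ via the twisted Frobenius--Mackey character formula, the surviving terms localize on the cycles of $g$, all of even length and even color: the block-swap folds each cycle of length $2\alpha_j$ and color $2i$ into a cycle of length $\alpha_j$ and color $i$, so that the color data, halved under $2i\mapsto i$, collapses the computation to the $G(r',n')$-character $\chi_\mu$ evaluated at $(\alpha^{(0)},\alpha^{(2)},\dots,\alpha^{(r-2)})$, while each of the $\ell(\alpha)$ cycles contributes an extra factor $2$, producing $2^{\ell(\alpha)}$. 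Finally the signature of $g$, encoded in the color placement on the largest elements of the cycles in the definition of the normal element together with the special modification made in $\mathrm{cl}_{2\alpha}^1$, yields the global sign $(-1)^\epsilon$.

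The hard part will be this last step: the explicit construction of the associator and the bookkeeping in the twisted induced trace. One must verify that the block-swap element genuinely intertwines $\rho_{\mu,\mu}$ with its $\delta$-twist, pin down the normalization $S^2=1$, and then check that the prescribed colors of the normal element produce exactly $(-1)^\epsilon 2^{\ell(\alpha)}$ rather than merely $\pm 2^{\ell(\alpha)}$ with an undetermined sign. This parallels, and generalizes to arbitrary $r$ and $p$ with $\GCD(p,n)=2$, the difference-character computation of Stembridge (\cite{Stembridge}, Sections 6 and 7B); the specialization $r=p=2$ recovering Lemma \ref{splitchar} in type $D$ provides a useful consistency check.
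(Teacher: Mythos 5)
Your proposal follows essentially the same route as the paper's proof: the paper likewise realizes $\rho_{\mu,\mu}$ as $\Ind_{G(r,n')\times G(r,n')}^{G(r,n)}\bigl(\rho_\mu\odot(\delta\otimes\rho_\mu)\bigr)$, takes the associator to be the row-swap on the two-rowed coset representatives combined with the swap of the two tensor factors, and evaluates the twisted trace on normal elements, getting $2^{\ell(\alpha)}$ from the two admissible row-assignments per cycle and $\chi_\mu(\alpha^{(0)},\alpha^{(2)},\ldots,\alpha^{(r-2)})$ from the folded (color-halved) cycles, exactly as you predict. The only cosmetic differences are that the paper disposes of the unsplit classes by noting that $\chi^0_{[\mu,\mu]}$ and $\chi^1_{[\mu,\mu]}$ are conjugate characters (your centralizer argument is an equally valid variant), and it obtains the sign $(-1)^\epsilon$ not by a second direct computation on $\mathrm{cl}^1_{2\alpha}$ but by computing only on $\mathrm{cl}^0_{2\alpha}$ and transporting the value via the transformation law $\Delta^1_{\mu,\mu}(ghg^{-1})=\delta(g)\Delta^1_{\mu,\mu}(h)$ together with Lemma \ref{signature} --- the same law you already state and use.
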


\begin{proof}
When $g$ does not belong to a split conjugacy class, $\Delta^1_{\mu, \mu}(g)=0$. In fact, $\chi^0_{[\mu, \mu]}$ and
$\chi^1_{[\mu, \mu]}$ are conjugate characters, so they must coincide on every element belonging to an unsplit class.

When $g$ does belong to a split conjugacy class, this proof consists of three steps:
\begin{enumerate}
\item Provide an explicit description for the $G(r,n)$-module $\rho_{\mu,\mu}$.
\item Build an associator $S$ for the $G(r,n)$-module $\rho_{\mu, \mu}$.
\item Compute the trace $\tr(S(g))=\Delta^1_{\mu, \mu}(g).$
\end{enumerate}

Let us start with the first step. For brevity, we set $\tau=(t_0,\ldots,t_{r'-1})$, where $t_i=|\mu^{(i)}|$ and $G(r,(\tau,\tau))\eqdef G(r,t_0)\times\cdots\times G(r,t_{r'-1})\times G(r,t_0)\times\cdots\times G(r,t_{r'-1})<G(r,n)$.

Our representation $\rho_{\mu,\mu}$ looks like this (see Theorem \ref{rapp di grn}):

\begin{eqnarray*}\rho_{\mu,\mu}&=&\Ind_{G(r,(\tau,\tau))}^{G(r,n)} \big(\tilde \rho_{\mu^{(0)}}\odot (\gamma_{n_1}\otimes \tilde\rho_{\mu^{(1)}})\odot  \cdots \odot (\gamma_{n_{r'-1}}^{\otimes r'-1}\otimes \tilde \rho_{\mu^{(r'-1)}})\odot(\gamma_{n_{0}}^{\otimes r'}\odot \tilde \rho_{\mu^{(0)}})\\ 
&& \odot(\gamma_{n_{0}}^{\otimes r'}\odot \tilde \rho_{\mu^{(0)}})\odot (\gamma_{n_1}^{\otimes r'+1}\otimes \tilde\rho_{\mu^{(1)}})\odot  \cdots \odot (\gamma_{n_{r'-1}}^{\otimes r-1}\otimes \tilde \rho_{\mu^{(r'-1)}} )\big)\\
&=&\Ind_{G\left(r,n'\right)\times G\left(r,n'\right)}^{G(r,n)}\Big( \Ind_{G(r,(\tau,\tau))}^{G\left(r,n'\right)\times G\left(r,n'\right)} 
\big(\tilde \rho_{\mu^{(0)}}\odot (\gamma_{n_1}\otimes \tilde\rho_{\mu^{(1)}})\odot  \cdots \odot (\gamma_{n_{r'-1}}^{\otimes r'-1}\otimes \tilde \rho_{\mu^{(r'-1)}})\\ 
&&\odot(\gamma_{n_{0}}^{\otimes r'}\odot \tilde \rho_{\mu^{(0)}})\odot (\gamma_{n_1}^{\otimes r'+1}\otimes \tilde\rho_{\mu^{(1)}})\odot  \cdots \odot (\gamma_{n_{r'-1}}^{\otimes r-1}\otimes \tilde \rho_{\mu^{(r'-1)}} )\big)\Big)\\
&=&\Ind_{G\left(r,n'\right)\times G\left(r,n'\right)}^{G(r,n)} \rho_\mu \odot (\gamma_{n'}^{r'}\otimes \rho_\mu )\\
&=&\Ind_{G\left(r,n'\right)\times G\left(r,n'\right)}^{G(r,n)} \rho_\mu \odot (\delta\otimes \rho_\mu ).
\end{eqnarray*}
We need to give an explicit description of this representation of $G(r,n)$. We consider the set $\Theta$ of two-rowed arrays $\left[\begin{array}{lll} t_1& \ldots & t_{n'}\\ t_{n'+1}&\ldots&t_n \end{array} \right]$
such that $\{t_1, \ldots, t_n\}=\{1, \ldots,n\}$ and the $t_i$'s increase on each of the two rows. Each element in $\Theta$ can be identified with the permutation whose window notation is $[t_1, \ldots, t_{n}]$.
\begin{prop}\label{coset}
 Let $g \in G(r,n)$.  Then $$\exists \,\,!\,\, t'\in \Theta : g=t'(x_1, x_2) \mbox{ with }
(x_1, x_2)\in G\left(r,n'\right) \times G\left(r,n'\right).$$
\end{prop}

\begin{proof}
Existence.
Let $g=[\sigma_1^{z_1}, \ldots, \sigma_n^{z_n}]$, and let $t$ be the tabloid whose first and second line are filled with the (reordered) integers $\sigma_1, \ldots, \sigma_{n'}$, and $\sigma_{n'+1}, \ldots, \sigma_n$ respectively. Since we need to obtain $g=t(x_1, x_2)\mbox{ with }
(x_1, x_2)\in G\left(r,n'\right) \times G\left(r,n'\right)$, we have to check that $t^{-1}g \in G\left(r,n'\right) \times G\left(r,n'\right)$, i.e.,
\begin{align*}
1 \leq t^{-1}|g|(i)=t^{-1}\sigma_i\leq n', \quad  &\mbox{ if } i\in [n']\\
n'<t^{-1}|g|(i)t^{-1}\sigma_i\leq n, \quad &\mbox{ if }i\in [n'+1,n],
\end{align*}
and this is an immediate consequence of the way $t$ was constructed.

Uniqueness. This is due to cardinality reasons:
$$\left\rvert\Theta\right\rvert\left\rvert G\left(r,n'\right)\times G\left(r,n'\right)\right\rvert=\binom{n}{n'}
(n'!r^{n'})^2=\left\rvert G(r,n)\right\rvert.$$
\end{proof}

Thanks to Proposition \ref{coset}, a set of coset representatives for  $${G(r,n)}/({G(r,n')\times G(r,n')})$$ is given by $\Theta$.
Let $T$ be the vector space spanned by the elements of $\Theta$.
 The vector space associated to the representation we are dealing with can be identified with $T\otimes V\otimes V$, and the action of $\rho_{\mu, \mu}$ on it is given by
\begin{align*}
 \rho_{\mu, \mu}:G(r,n)&\longrightarrow GL(T \otimes V \otimes V)\\
x&\longmapsto  \rho_{\mu, \mu}(x):T \otimes V \otimes V \longrightarrow T \otimes V \otimes V\\
& \qquad \qquad \quad\,\,  \,\,\,\,\,t\otimes v_1\otimes v_2 \longmapsto \delta(x_2) t' \otimes \rho_{\mu}(x_1) (v_1) \otimes \rho_{\mu}(x_2)( v_2),
\end{align*}
where $t'$, $x_1$ and $x_2$ are uniquely determined by the relation $xt=t'(x_1, x_2)$ with $t\in \Theta$, $(x_1, x_2) \in G\left(r,n'\right) \times G\left(r,n'\right)$.

We are now ready for the second step.
\begin{prop}
The automorphism $S\in \GL(T \otimes V \otimes V)$ so defined:
$$S(t\otimes v_1\otimes v_2)=\hat t \otimes v_2\otimes v_1, $$
where $\hat t$ is the element of $\Theta$ obtained from $t$ by exchanging its rows,
is an associator for $T \otimes V \otimes V$.
\end{prop}
\begin{proof} 
All we have to show is that $S$ is an isomorphism of representations between $\rho_{\mu, \mu}$ and
$ \delta\otimes \rho_{\mu, \mu}$, i.e.
$$S\circ\rho_{\mu, \mu}(g)=\delta(g)\rho_{\mu, \mu}(g)\circ S.$$
The set of permutations together with the diagonal matrix $[1^0, 2^0, \ldots, n^1]$ generate $G(r,n)$, so this verification can be accomplished when $g$ is one of these elements only. Just exploit Proposition \ref{coset} to write $gt$ as a product $t'(g_1, g_2)$, and notice that
 $\hat t= ts$ if $s$ is the tabloid $\left[\begin{array}{lll}
n'+1& \ldots &  n\\
              1&\ldots&n'
          \end{array}
\right]$. Also, when $g=[1^0, 2^0, \ldots, n^1]$, the equality 
$g\widehat{t}=gts=t(g_1, g_2)s=ts(g_2,g_1)=\widehat{t}(g_2,g_1)$ is used. 
\end{proof}

Finally, the last step: let us compute $\Delta^1_{\mu, \mu}(g)$, for every
$g$ belonging to a split conjugacy class. Since $\Delta^1_{\mu, \mu}$ is a class function, we can choose one special element from each $G(r,p,n)$-conjugacy class. In fact, even less is needed: it will be enough to choose one element from each $G(r,n)$-conjugacy class, because of the useful relation (see \cite{Stembridge}, Proposition 6.2)
\begin{equation}\label{formulina}
\Delta^1_{\mu,\mu}(ghg^{-1})= \delta(g)\Delta^1_{\mu,\mu}(h) \,\,\, \forall g \in G(r,n), h\in G(r,p,n).
\end{equation}
We will compute $\Delta^1_{\mu, \mu}(h)$, where $h$ is the normal element of the class $\mathrm{cl}_{2\alpha}^0$. 
By definition,
$\Delta^1_{\mu, \mu}(h)=\tr(Sh|_{T\otimes V\otimes V})$.
Now,
 given $t'\in \Theta $ and $(h_1, h_2)\in G\left(r, n'\right)\times G\left(r, n'\right)$ verifying $ht=t'(h_1, h_2)$, if
 $v_i$, $v_j$ are vectors of a basis of $V$,
\begin{align*}
S [h (t \otimes v_i \otimes v_j )]&= S [\delta(h_2) t' \otimes h_1 v_i \otimes h_2 v_j]\\
&= \delta(h_2) \hat {t'} \otimes h_2 v_j \otimes h_1 v_i\\
&= \hat{ t'}\otimes h_2 v_j \otimes h_1 v_i,
\end{align*}
where the last equality depends on the special way we chose $h$. Namely, since $ (h_1, h_2)=(t')^{-1}ht $ with $t, t'\in S_n$, the colors of $(h_1, h_2)$ are the same as in $h$ and they are simply permuted, so in $(h_1, h_2)$ the colors are all even. So the trace we are computing is given by

$$\sum_{i,j=1, \ldots, n', t=\hat{ t'}}(\rho_{\mu}(h_2))_{i,j}(\rho_{\mu}(h_1))_{j,i}=\sum_{ t=\hat{ t'}}\chi^\mu(h_1 h_2).$$
Recall the way $t'$ is constructed in the proof of Proposition \ref{coset}: $t'=\hat t$ if and only if $|h|(t_i)$ belongs to $\{t_{n'+1}, \ldots, t_{2n}\}$ for every $i\in [n']$:
$$\{t_{n'+1}, \ldots, t_{2n}\}=\{|h|(t_i)\}_{1 \leq i \leq n'}, $$ and, viceversa, $$\{t_1, \ldots, t_{n'}\}=\{|h|(t_i)\}_{n' <i \leq n}. $$
So the $t$'s satisfying $t=\widehat{t'}$ are those $t$ such that, for every cycle of $h$, the consecutive numbers are in opposites rows. We have two possibilities for each cycle, so they are $2^{\ell(\alpha)}$.

Furthermore, suppose $h$ contains a cycle of length $2k$ and color $2j$. Then, according to which of the two possible choices is made for $t$, a cycle of length $k$ and color $j$ will be contained either in $h_1$ or in $ h_2$. Thus, $h_1h_2$ belongs to the $G\left(r',n'\right)$-class $\mathrm{cl}_{\alpha^{(0)},\alpha_{(2)}, \ldots, \alpha_{(r-2)}}$.
So our final result is
$$\Delta^1_{\mu, \mu}(h)=2^{\ell(\alpha)}\chi^\mu(\alpha^{(0)},\alpha^{(2)}, \ldots, \alpha^{(r-2)}).$$

Let us now turn to the elements belonging to the other split conjugacy class $\mathrm{cl}^1_{2\alpha}$. 
If $h$ belongs to $\mathrm{cl}^0_{2\alpha}$, thanks to Lemma \ref{signature}, 
$$ghg^{-1}\in \mathrm{cl}^1_{2\alpha}\Rightarrow z(g)=1 \mod 2 \Rightarrow \delta(g)=-1,$$ therefore (see equality \eqref{formulina})
$$\Delta^1_{\mu, \mu}(ghg^{-1})=\delta(g)\Delta^1_{\mu, \mu}(h) =-2^{\ell(\alpha)}\chi^\mu(\alpha^{(0)},\alpha^{(2)}, \ldots, \alpha^{(r-2)}).$$
\end{proof}

\section{The antisymmetric submodule}\label{Asym}

This section is the real heart of this paper. We study the irreducible decomposition of the antisymmetric submodule $M^1$ (and hence also of the symmetric submodule $M^0$) of the Gelfand model $M(r,1,p,n)$ of the group $G(r,p,n)$ constructed in Theorem \ref{modello}. More precisely we will show that  $M^1$ affords exactly one representation of each pair of split irreducible representations of $G(r,p,n)$ namely, the one labelled with  $1$. \\
 If $\GCD(p,n)=1$ the antisymmetric submodule vanishes (and there are no split representations) so in this section we can always assume that $\GCD(p,n)=2$.

\begin{thm}\label{AsymGrpn}
Let $M^1$ be the antisymmetric submodule of the Gelfand model $M(r,1,p,n)$ of $G(r,p,n)$. Then $$(M^1,\varrho)\cong \bigoplus_{[\mu,\mu]\in \Fer(r,1,p,n)}\rho_{[\mu, \mu]}^1.$$
\end{thm}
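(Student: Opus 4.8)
The plan is to adapt the strategy used for Theorem~\ref{splitD2m} in type~$D$, replacing the ad hoc character computations there by the discrete Fourier transform of Section~\ref{DFT}, and in particular by the explicit evaluation of $\Delta^1_{\mu,\mu}$ in Proposition~\ref{mu general case}. On the space $M^1$, spanned by the antisymmetric absolute involutions $v\in I(r,1,p,n)$, I introduce the two maps
\begin{equation}\label{phidef}
\phi^1(g)(C_v)\eqdef\zeta_r^{\langle g,v\rangle}\zeta_r^{a(g,v)}C_{|g|v|g|^{-1}},\qquad \phi^0(g)(C_v)\eqdef\zeta_r^{\langle g,v\rangle}C_{|g|v|g|^{-1}}.
\end{equation}
By Theorem~\ref{modello} we have $\phi^1=\varrho|_{M^1}$, and a short cocycle check (using that $g\mapsto\zeta_r^{\langle g,v\rangle}$ already transforms correctly under the conjugation action) shows that $\phi^0$ is likewise a representation of $G(r,p,n)$. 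The aim is to prove $\phi^1\cong\bigoplus_{[\mu,\mu]}\rho^1_{[\mu,\mu]}$, together with its companion $\phi^0\cong\bigoplus_{[\mu,\mu]}\rho^0_{[\mu,\mu]}$.

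Everything reduces to a single identity of difference characters. Recalling from Section~\ref{DFT} that $\Delta^1_{\mu,\mu}=\chi^0_{[\mu,\mu]}-\chi^1_{[\mu,\mu]}$, I claim it suffices to prove
\begin{equation}\label{Mdiff}
\chi_{\phi^0}-\chi_{\phi^1}=\sum_{[\mu,\mu]\in\Fer(r,1,p,n)}\Delta^1_{\mu,\mu}.
\end{equation}
Indeed, \eqref{Mdiff} rearranges to the equality of characters $\chi_{\phi^0}+\sum_{[\mu,\mu]}\chi^1_{[\mu,\mu]}=\chi_{\phi^1}+\sum_{[\mu,\mu]}\chi^0_{[\mu,\mu]}$, so the two genuine representations $\phi^0\oplus\bigoplus_{[\mu,\mu]}\rho^1_{[\mu,\mu]}$ and $\phi^1\oplus\bigoplus_{[\mu,\mu]}\rho^0_{[\mu,\mu]}$ are isomorphic. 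Comparing the multiplicity of a fixed irreducible $\rho^1_{[\nu,\nu]}$ on both sides (it occurs once in $\bigoplus\rho^1$ but not at all in $\bigoplus\rho^0$) gives $[\phi^1:\rho^1_{[\nu,\nu]}]=[\phi^0:\rho^1_{[\nu,\nu]}]+1\geq 1$, so $\bigoplus_{[\mu,\mu]}\rho^1_{[\mu,\mu]}$ embeds into $\phi^1$. Finally Proposition~\ref{casym} and Theorem~\ref{irrepgrpqn} yield $\dim M^1=\mathrm{asym}(r,1,p,n)=\sum_{[\mu,\mu]}|\St_{[\mu,\mu]}|=\sum_{[\mu,\mu]}\dim\rho^1_{[\mu,\mu]}$, so the embedding exhausts $\phi^1$ and the theorem follows; the case of $\phi^0$ is symmetric.

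It remains to establish \eqref{Mdiff}, and here I follow the pattern of Proposition~\ref{mu general case}. Evaluating the left-hand side as a weighted fixed-point count gives
$$
(\chi_{\phi^0}-\chi_{\phi^1})(g)=\sum_{v\,:\,|g|v|g|^{-1}=v}\zeta_r^{\langle g,v\rangle}\bigl(1-\zeta_r^{a(g,v)}\bigr),
$$
the sum running over antisymmetric absolute involutions $v$ invariant under conjugation by $|g|$. First I verify that this class function obeys the twisting law $(\chi_{\phi^0}-\chi_{\phi^1})(ghg^{-1})=\delta(g)\,(\chi_{\phi^0}-\chi_{\phi^1})(h)$ for all $g\in G(r,n)$, exactly as in \eqref{formulina}: pairing the $v$ fixed by $ghg^{-1}$ with $|g|^{-1}v|g|$ fixed by $h$ and tracking how the colors contribute shows that the weight is multiplied precisely by $\delta(g)=(-1)^{z(g)}$. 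Since $G(r,p,n)$ is normal in $G(r,n)$ and an unsplit class is a single $G(r,n)$-class, choosing $g$ of odd color forces the value on any unsplit class to equal its own negative, hence to vanish; this matches the second line of Proposition~\ref{mu general case}. On split classes the same law reduces the computation to the normal element $h$ of $\mathrm{cl}^0_{2\alpha}$, the value on $\mathrm{cl}^1_{2\alpha}$ being recovered by a $\delta$-twist.

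The main obstacle is the evaluation on this normal element, namely proving
$$
\sum_{v\,:\,|h|v|h|^{-1}=v}\zeta_r^{\langle h,v\rangle}\bigl(1-\zeta_r^{a(h,v)}\bigr)=2^{\ell(\alpha)}\sum_{\mu}\chi_\mu(\alpha^{(0)},\alpha^{(2)},\dots,\alpha^{(r-2)}).
$$
The antisymmetric involutions $v$ commuting with $|h|$ are the $|h|$-invariant colored perfect matchings compatible with the cycle type $2\alpha$; using Lemma~\ref{anrs} these are organized by the shift-by-$r'$ structure relating them to $(\mu,\mu)$-shapes, and, after halving the color group and the cycle type $2\alpha$ to $\alpha$, each such $v$ reduces to an absolute involution of the half-sized wreath product $G(r',n')$ of cycle type $(\alpha^{(0)},\alpha^{(2)},\dots,\alpha^{(r-2)})$. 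Carrying out the summation, the contributions with $a(h,v)=0$ vanish outright while the phases $\zeta_r^{\langle h,v\rangle}$ combine with the surviving factors $1-\zeta_r^{a(h,v)}$ to produce the constant $2^{\ell(\alpha)}$ (one binary choice per cycle), and by the combinatorial interpretation of the Gelfand model character $\sum_\mu\chi_\mu$ of $G(r',n')$ furnished by \cite{CaFu} (Theorem~\ref{wreath}) the remaining sum is exactly $\sum_\mu\chi_\mu(\alpha^{(0)},\dots,\alpha^{(r-2)})$. This is precisely $\sum_{[\mu,\mu]}\Delta^1_{\mu,\mu}(h)$ by Proposition~\ref{mu general case}. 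Keeping track of the colors modulo $r$, of this folding onto $G(r',n')$, and of the precise origin of the factor $2^{\ell(\alpha)}$ is the delicate bookkeeping that constitutes the heart of the argument.
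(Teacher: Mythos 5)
Your overall frame coincides with the paper's proof in Section~\ref{Asym}: the same pair $\phi^0,\phi^1$ on $M^1$, the same reduction to the single difference-character identity (your claimed identity is exactly the paper's \eqref{piumeno}), and the same conclusion via linear independence of characters plus the dimension count from Proposition~\ref{casym} and Theorem~\ref{irrepgrpqn}. Your one real variant is to dispose of the unsplit classes and of $\mathrm{cl}^1_{2\alpha}$ by a twisting law $(\chi_{\phi^0}-\chi_{\phi^1})(ghg^{-1})=\delta(g)(\chi_{\phi^0}-\chi_{\phi^1})(h)$ for $g\in G(r,n)$, in the spirit of \eqref{formulina}; the paper instead proves the vanishing on unsplit classes directly (Lemmas~\ref{Fpi0} and~\ref{cyc1}) and produces the sign $(-1)^\eta$ through the signature statistic (Lemmas~\ref{signature} and~\ref{caso}). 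Your law is in fact true and provable by the pairing you describe, but note it cannot be quoted from \eqref{formulina} (at this stage $\chi_{\phi^0}-\chi_{\phi^1}$ is not yet known to be a combination of the $\Delta^1_{\mu,\mu}$), and its term-by-term verification needs the relation $z_{|h|^{-1}(j)}(w)=z_j(w)+r'$ for anticommuting lifts together with the fact that $a(h,w)\in\{0,r'\}$ (\cite[Lemma 5.7]{Ca2}), so that the commuting lifts, whose weights are \emph{not} twisted, are killed by the factor $1-\zeta_r^{a(h,w)}$.

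The genuine gap is the evaluation at the normal element of $\mathrm{cl}^0_{2\alpha}$, which you yourself defer as ``the delicate bookkeeping that constitutes the heart of the argument,'' and whose sketch is inaccurate in ways that matter. First, to compute the weighted fixed-point sum you must know what the invariant antisymmetric involutions are: the paper imports from \cite{Ca2} their organization by partitions $\pi\in\Pi^{2,1}(h)$ into singletons and pairs of equal-length cycles, with the anticommuting lifts $A^1_\pi$ parametrized explicitly in Table~\ref{rotfloat2} (in particular singletons must have length $\equiv 2 \bmod 4$ and pairs even length); nothing in your argument substitutes for this classification. Second, each surviving term carries a single factor $1-\zeta_r^{r'}=2$, which is absorbed into the prefactor $1/p$ to give $1/p'$; it does not ``combine to produce the constant $2^{\ell(\alpha)}$.'' The actual source of $2^{\ell(\alpha)}$ is the count $|A^1_\pi|=r^{\ell(\pi)}\prod_j(2j)^{\pair_{2j}(\pi)}$ rewritten via $r=2r'$ and $\ell(\pi)+\#\{\mathrm{pairs\ of\ }\pi\}=\ell(\alpha)$; your ``one binary choice per cycle'' is the mechanism in the proof of Proposition~\ref{mu general case} (the choice of a tabloid row for each cycle), i.e.\ it explains the \emph{other} side of the identity, not this one. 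Third, the sum must be restricted to partitions $\pi$ with $z(s)\equiv 0$ for every part $s$ (all other terms vanish as geometric series, Lemmas~\ref{cyc1} and~\ref{caso}), and the match with $2^{\ell(\alpha)}\sum_\mu\chi_\mu(\alpha^{(0)},\alpha^{(2)},\ldots,\alpha^{(r-2)})$ is then achieved through the closed formula for the Gelfand model character of $G(r',n')$, namely $\chi_M(\alpha)=\sum_\pi(r')^{\ell(\pi)}\prod_j j^{\pair_j(\pi)}$, which is \cite[Proposition 3.6]{Ca2}; Theorem~\ref{wreath}, which you cite for this, is the decomposition of the modules $M(c)$ for wreath products and supplies no character evaluation. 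Until these three ingredients are in place, the key identity \eqref{piumeno} --- and with it the theorem --- remains unproved.
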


\begin{proof}
The strategy in this proof is the one outlined for the case of Weyl groups of type $D$. So we consider the two representations of $G(r,p,n)$ $$(M^1,\phi^0)\quad \mbox{ and }\quad (M^1,\phi^1),$$ given by
$$\phi^0(g)(C_v)\eqdef\zeta_r^{\langle g,v\rangle}C_{|g|v|g|^{-1}},  \qquad \phi^1(g)(C_v)\eqdef\zeta_r^{\langle g,v\rangle}\zeta_r^{a(g,v)}C_{|g|v|g|^{-1}}$$
(notice that $\phi^1(g)=\varrho(g)|_{M^1}$). 
The main idea of this proof is to exploit Proposition \ref{mu general case} to show that
\begin{equation}\label{piumeno}
\chi_{\phi^0}(g)-\chi_{\phi^1}(g)=\sum_{[\mu]\in \Fer(r',1,p',n')}\chi_{[\mu, \mu]}^0(g)-\sum_{[\mu]\in \Fer(r',1, p',n')}\chi_{[\mu, \mu]}^1(g),
\end{equation}
where we observe that if $[\mu]$ ranges in $\Fer(r',1,p',n')$ then $[\mu,\mu]$ ranges in $\Fer(r,1,p,n)$. First of all, we will compute the right-hand side of \eqref{piumeno}. We already know that it vanishes on every $g$ belonging to an unsplit conjugacy class. So let $g\in \mathrm{cl}_{2\alpha}^0$.

Let $\chi_M$ denote the character of a model for the group $G(r',n')$. Then
\begin{align*}
\sum_{[\mu]\in \Fer(r',1,p',n')}(\chi_{[\mu, \mu]}^0-\chi_{\mu, \mu}^1)(g)&=
\frac{1}{p'}\sum_{\mu\in \Fer(r',n')}(\chi_{[\mu, \mu]}^0-\chi_{[\mu, \mu]}^1)(g)\\
&=\frac{1}{p'}\sum_{\mu\in \Fer(r',n')}2^{\ell(\alpha)}\chi_\mu(\alpha)\\
&=\frac{1}{p'}2^{\ell(\alpha)} \chi_M(\alpha),
\end{align*}
where the first equality holds because the contribution of every $\mu\in \Fer(r',n')$ provides $p'$ copies of the same irreducible representation of $G(r,p,n)$, the second one follows from Proposition \ref{mu general case}, and $\chi_M(\alpha)$ denotes the value of the character $\chi_M$ on any element of the conjugacy class ${\rm cl}_{\alpha}$ of $G(r',n')$.

Now, following \cite{Ca2}, for $g\in G(r,n)$ we denote by $\Pi^{2,1}(g)$ the set of partitions of the set of disjoint cycles of $g$ into:
 \begin{itemize}
   \item singletons;
   \item pairs of cycles having the same length.
 \end{itemize}
If $\pi\in\Pi^{2,1}(g)$ we let $\ell(\pi)$ be the number of parts of $\pi$ and  $\pair_j(\pi)$ be the number of parts of $\pi$ which are pairs of cycles of length $j$. Moreover, if $s\in \pi$ is a part of $\pi$ we let $z(s)$ be the sum of the colors of the (either 1 or 2) cycles in $s$.\\
If $g$ and $g'$ belong to the same conjugacy class  ${\rm cl}_\alpha$ there is clearly a bijection between $\Pi^{2,1}(g)$ and $\Pi^{2,1}(g')$ preserving the statistics $\ell(\pi)$ and $\pair_j(\pi)$, and  the colors $z(s)$ of the parts of $\pi$; therefore we sometimes write $\Pi^{2,1}(\alpha)$ meaning   $\Pi^{2,1}(g)$ for some $g\in {\rm cl}_\alpha$.\\
The following equality holds (see \cite[Proposition 3.6]{Ca2}):
$$ \chi_M(\alpha)=\sum_{\pi}\left(r'\right)^{\ell(\pi)}\prod_jj^{\pair_j(\pi)}\\
$$
for all $\alpha\in \Fer(r',n')$, where the sum is taken over all elements of $\Pi^{2,1}(\alpha)$ having no singletons of even length and such that $z(s)=0\in \mathbb Z_{r'}$ for all $s \in \pi$.

Let us now evaluate $\chi_{\phi^0}(g)-\chi_{\phi^1}(g)$. To this aim, we recall some further notation used in \cite{Ca2}. Consider, for every $g\in G(r,p,n)$ and $\epsilon\in \mathbb Z_2$, the set
$$A^{\epsilon}(g):=\{w\in G(r,n):w \textrm{ is antisymmetric and } |g|w|g|^{-1}=(-1)^\epsilon w\}.$$
 Any $w\in A^{\epsilon}(g)$ determines a partition $\pi(w)\in \Pi^{2,1}(g)$: a cycle $c$ is a singleton of $\pi(w)$ if the restriction of $|w|$ to $\Supp(c)$ is a permutation of $\Supp(c)$ and $\{c,c'\}$ is a pair of $\pi(w)$ if the restriction of $|w|$ to $\Supp(c)$ is a bijection between $\Supp(c)$ and $\Supp(c')$. Finally, if $\pi\in \Pi^{2,1}(g)$, we let $A^{\epsilon}_\pi\eqdef \{w \in A^\epsilon(g): \pi(w)=\pi\}$. Then the set $A^\epsilon(g)$ can be decomposed
into the disjoint union
\begin{equation}\label{unionona}
A^\epsilon(g)=\bigcup_{\pi \in \Pi^{2,1}(g)}A^{\epsilon}_\pi.
\end{equation}

With the above notation, we have
$$\chi_{\phi^0}(g)-\chi_{\phi^1}(g)=\frac{1}{p}\sum_{\pi \in \Pi^{2,1}(g)}\sum_{A_\pi^0 \cup A_\pi^1}\zeta_r^{\langle g,w\rangle}(1-\zeta_r^{a(g,w)}).$$
Since (see \cite[Lemma 5.7] {Ca2}) 
$$
a(g,w)=\left\{\begin{array}{ll}0,&\textrm{if }w\in A_\pi^0(g);\\r',&\textrm{if }w\in A_\pi^1(g),
              \end{array}\right.$$ we find
\begin{align}\label{halfway}
\chi_{\phi^0}(g)-\chi_{\phi^1}(g)&= \frac{1}{p'}\sum_{\pi \in \Pi^{2,1}(g)}\sum_{w \in A_\pi^1}\zeta_r^{\langle g,w\rangle}.
\end{align}

If $\pi=\{s_1,\ldots,s_h\}$, it is shown in \cite[Section 5]{Ca2} that the set $A_\pi^1$ has a natural decomposition $A_\pi^1=A_{s_1}^1\times \ldots \times A_{s_h}^1$, i.e. every $w$ in $A_\pi^1$ can be written as a product $w= w_1 \cdot \ldots \cdot w_h$, with $w_i\in A_{s_i}^1$. The sets $A_{s_i}^1$ depend on the structure of $|s_i|$ only, and are described in Table \ref{rotfloat2}. In this table the indices of $i_1,\ldots,i_d,j_1,\ldots,j_d$ should be considered in $\mathbb Z_d$ and in any box of the table the parameters $k\in \mathbb Z_r$ and $l\in \mathbb Z_d$ are arbitrary but fixed. 
\begin{table}
\renewcommand{\arraystretch}{1.5}
\begin{tabular}{|c|l|}
\hline
$|s|$&$A^1_s$\\
\hline
\hline
\hspace{-1.8cm}$\{(i_1,\ldots,i_d)\}$  &\raisebox{-3mm}[0pt][0pt]{$i_h\mapsto\zeta_r^k (-1)^hi_{h+\frac{d}{2}}$}\\
\hspace{1cm}with $d\equiv 2 \mod 4$& \\
\hline
\hspace{-1.8cm}$\{(i_1,\ldots,i_d)\}$&\raisebox{-3mm}[0pt][0pt]{$\emptyset$}\\
\hspace{1cm}with $d\not \equiv 2 \mod 4$&\\
\hline
$\{(i_1,\ldots,i_d),(j_1,\ldots,j_d)\}$,  &\raisebox{-3mm}[0pt][0pt]{$\emptyset$}\\
\hspace{1cm}with $d$ odd & \\
\hline
$\{(i_1,\ldots,i_d),(j_1,\ldots,j_d)\}$,&$ i_h\mapsto \zeta_r^k (-1)^h j_{h+l}$\\
\hspace{1cm} with $d$ even & and $j_h\mapsto-\zeta_r^k (-1)^{h-l} i_{h-l}$\\
\hline
\end{tabular}
\vspace{5mm}
\caption[]{}\label{rotfloat2}
\end{table}
For example, if $s=\{(1,2),(3,4)\}$, and $r=4$, then $A_s^1$ consists of the 8 elements having either the form $(1^{k+2},3^k)(2^k,4^{k+2})$ or the form $(1^{k+2},4^k)(2^k,3^{k+2})\}$, as $k$ varies in $\{0,1,2,3\}$. This allows to focus on the single sets $A_s^1$, via the identity
\begin{align*}
\sum_{w\in A_\pi^1}\zeta_r^{\langle g,w\rangle} &=
\sum_{w_1\in A_{s_1}^1,\ldots ,w_h\in A_{s_h}^1}\zeta_r^{\langle g,w_1\cdots w_h\rangle}\\
&=\sum_{w_1\in A_{s_1}^1,\ldots ,w_h\in A_{s_h}^1}\zeta_r^{\sum_i \langle g_i,w_i\rangle}\\
&=\prod_{i=1}^h \sum_{w_i\in A_{s_i}^1}\zeta_r^{\langle g_i,w_i\rangle},
\end{align*}
where $g_i\in G(r,\Supp(s_i))$ is the restriction of $g$ to $\Supp(s_i)$ (if $s\in\Pi^{2,1}(g)$, we let $\Supp(s)$, the support of $s$, be the union of the supports of the cycles in $s$).

\begin{lem}\label{Fpi0}
If $g\in G(r,n)$ has at least one cycle $c$ of odd length, then
$$
\sum_{w \in A_\pi^1}\zeta_r^{\langle g,w\rangle}=0
$$
for all $\pi\in \Pi^{2,1}(g)$.
\end{lem}
\begin{proof}
This is trivial since in these cases $A_\pi^1=\emptyset$ (see Table \ref{rotfloat2}).
\end{proof}
By Lemma \ref{Fpi0} we can restrict our attention to those elements $g$ having all cycles of even length, and to those partitions $\pi\in \Pi_{2,1}(g)$ whose singletons have length $\equiv 2 \mod 4$.

\begin{lem}\label{cyc1}
If $g\in G(r,p,n)$ has at least one cycle $c$ of odd color, then
$$
\sum_{\pi\in \Pi_{2,1}(g)}\sum_{w \in A_\pi^1}\zeta_r^{\langle g,w\rangle}=0.
$$
for all $\pi\in \Pi_{2,1}(g)$.
\end{lem}
\begin{proof} Since the left-hand side is a class function we can assume that $g$ is normal. We prove in this case the stronger statement that $\sum_{w \in A_\pi^1}\zeta_r^{\langle g,w\rangle}=0$ for all $\pi\in \Pi_{2,1}(g)$. By Lemma \ref{Fpi0}, we can assume that the cycle $c$ has even length.
We split this result into two cases.
Assume that the cycle of odd color - say $j$ - is a singleton $s_i=\{c\}$ of $\pi$.  Then Table \ref{rotfloat2} furnishes the structure of $A_{s_i}^1$, and we find 
\begin{align*}
\sum_{w\in A_{s_i}^1}\zeta_r^{\langle g_i,w_i\rangle}=\sum_{k=0}^{r-1}\zeta_r^{jk}=0, 
\end{align*}
since $j$ is odd and cannot be a multiple of $r$. 

Now 
assume that the cycle $c$ belongs to a pair $s_i$ of $\pi$. Let us call $a$ and $b$ the two colors of the cycles in $s_i$, with $b$ odd. 
Again, looking at Table \ref{rotfloat2}, 
\begin{align*}
\sum_{w_i\in A_{s_i}^1}\zeta_r^{\langle g_i,w_i\rangle}&=\sum_{k=0}^{r-1}\sum_{l=0}^{d-1}\zeta_r^{ak+b(k+(l+1)r')}\\
&=\sum_{k=0}^{r-1}\big(\frac{d}{2}\zeta_r^{(a+b)k}+\frac{d}{2}\zeta_r^{ak+b(k+r')}\big)\\
&=\frac{d}{2}\sum_{k=0}^{r-1}\zeta_r^{(a+b)k}(1+\zeta_r^{br'})\\
&=\frac{d}{2}(1+\zeta_r^{br'})\sum_{k=0}^{r-1}\zeta_r^{(a+b)k}.
\end{align*}
Since $b$ is odd, the factor $1+\zeta_r^{br'}$ vanishes and so does the whole sum. 
\end{proof}

\begin{lem}\label{caso}
Let $g\in G(r,p,n)$ be normal and such that all cycles of $g$ have even color and even length. Then, for all $\pi\in \Pi^{2,1}(g)$,
$$\sum_{w \in A_\pi^1}\zeta_r^{\langle g,w\rangle}=\left\{\begin{array}{ll}
                                               (-1)^{\sign(g)}|A_\pi^1|, &\mbox{if $z(s)=0$ for all $s\in \pi$;}\\
                                               0, &\mbox{ otherwise.}
                                              \end{array}\right.
$$
\end{lem}
\begin{proof}
We first assume that $\sign(g)=0$.
If $s_i$ is a singleton of $\pi$ of color $2j$ and length of $\not \equiv 2 \mod 4$, then $A_{s_i}$ is empty and the result clearly follows. So we can assume that $\ell(s_i)\equiv 2 \mod 4$ and we can derive the value of $\langle g_i, w_i\rangle$ from Table \ref{rotfloat2}, and we obtain
\begin{align*}
\sum_{w_i\in A_{s_i}^1}\zeta_r^{\langle g_i,w_i\rangle}=\sum_{k=0}^{r-1}\zeta_r^{2jk}=\left\{\begin{array}{ll}
                                                                                 0,& \mbox{ if }2j\not \equiv 0 \mod r;\\
\\
                                                                                 r=|A_{s_i}^1|, &\mbox{ if }2j\equiv 0 \mod r.
                                                                                \end{array}\right.
\end{align*}

Let now $s_i=\{c_1,c_2\}$ be a pair of cycles of length $d$, and colors respectively $2a$ and $2b$.
\begin{align*}
\sum_{w_i\in A_{s_i}^1}\zeta_r^{\langle g_i,w_i\rangle}&=\sum_{k=0}^{r-1}\sum_{l=0}^{d-1}\zeta_r^{2ak+2b(k+(l+1)r')})\\
&=d\sum_{k=0}^{r-1}\zeta_r^{2ak+2bk}=\left\{\begin{array}{ll}
                                                                                 0, &\mbox{ if }2a+2b\not \equiv0 \mod r;\\
\\
                                                                                 dr=|A_{s_i}^1|, &\mbox{ if }2a+2b \equiv 0 \mod r.
                                                                                \end{array}\right.
\end{align*}
If $\sign(g)=1$ the proof is similar and is left to the reader.
\end{proof}
We are now ready to prove Theorem \ref{AsymGrpn}. Let first $g$ belong to an unsplit class. Then $g$ has a cycle of odd length or a cycle of odd color, and then Lemmas \ref{Fpi0} and \ref{cyc1} ensure that 
\begin{equation} \label{uns}                                                                                                                                                                                                                                                                                                                                                                                                                    \chi_{\phi^0}(g)-\chi_{\phi^1}(g)=0=\sum_{[\mu]\in \Fer(r',1,p',n')}\chi_{[\mu, \mu]}^0-\sum_{[\mu]\in \Fer(r',1, p',n')}\chi_{[\mu, \mu]}^1(g).
\end{equation}

So let $g$ belong to the split conjugacy class of the form $c_{2\alpha}^\eta$. We are interested in the evaluation of the sum appearing in \eqref{halfway} and so we can assume that $g$ is also a normal element.
Thanks to  Lemma \ref{caso}, the only partitions $\pi\in \Pi^{2,1}(g)$ contributing to the sum \eqref{halfway} are those verifying $z(s)=0 \mod r$ for all $s\in \pi$.
Thus,
\begin{align*}
\chi_{\phi^0}(g)-\chi_{\phi^1}(g)&=\frac{1}{p'}(-1)^\eta\sum_{\pi \in \Pi^{2,1}(g)}|A_\pi^1|\\
&=\frac{1}{p'}(-1)^\eta\sum_{\pi \in
\Pi^{2,1}(g)}r^{\ell(\pi)}\prod_j(2j)^{\pair_{2j}(\pi)},
\end{align*}
where the sum is taken over all partitions of $\Pi^{2,1}(g)$ such that:
\begin{itemize}
 \item singletons have length $\equiv 2 \mod 4$ (recall that $A_\pi^-=\emptyset$ if $\pi$ has a singleton of length $\equiv 0 \mod 4$);
 \item pairs have even length;
\item $z(s)=0 \mod r$, for all $s\in \pi$.
\end{itemize}

Summarizing, if $g\in {\rm cl}_{2\alpha}^\eta$, we have
\begin{align*}
\chi_{\phi^0}(g)-\chi_{\phi^1}(g)&= \frac{1}{p'}(-1)^\eta\sum_{\pi\in \Pi^{2,1}(2\alpha)}r^{\ell(\pi)}\prod_j(2j)^{\pair_{2j}(\pi)}\\
&= \frac{1}{p'}(-1)^\eta\sum_{\pi\in \Pi^{2,1}(2\alpha)}\left(2r'\right)^{\ell(\pi)}\prod_j(2j)^{\pair_{2j}(\pi)}\\
&=\frac{1}{p'}(-1)^\eta2^{\ell(\pi)+\sum_j\pair_{2j}(\pi)} \sum_{\pi\in \Pi^{2,1}(2\alpha)}\left(r'\right)^{\ell(\pi)}\prod_jj^{\pair_{2j}(\pi)}\\
&=\frac{1}{p'}(-1)^\eta2^{\ell(\alpha)} \sum_{\pi\in \Pi^{2,1}(\alpha)}\left(r'\right)^{\ell(\pi)}\prod_jj^{\pair_j(\pi)},\\
\end{align*}
where $\alpha$ has to be considered as an element in $\Fer(r',n')$ and the last sum is taken over all partitions of $\Pi^{2,1}(\alpha)$ whose singletons have odd length (and pairs have  any length), and $z(s)=0\in \mathbb Z_{r'}$ for all $s\in \pi$.

The above computation, together with \eqref{uns} and Proposition \ref{mu general case},
leads to
\begin{equation}\label{4caratteri}
\sum_{[\mu]\in \Fer(r',1,p',n')}\chi_{[\mu, \mu]}^0(g)+\chi_{\phi^1}(g)=
\sum_{[\mu]\in \Fer(r',1,p',n')}\chi_{[\mu, \mu]}^1(g)+\chi_{\phi^0}(g) \qquad \forall \,g \in G(r,p,n).
\end{equation}
Now, $\sum_{\mu\in \Fer(r',1,p',n')}\chi_{\mu, \mu}^0$ and $\sum_{\mu\in \Fer(r',1,p',n')}\chi_{\mu, \mu}^1$ are orthogonal characters. As
\begin{equation*}
\sum_{\mu\in \Fer(r',1,p',n')}\dim(\rho^0_{[\mu, \mu]})=\sum_{\mu\in \Fer(r',1,p',n')}|\St_{[\mu, \mu]}|=\dim(M^1) =\dim (\phi^0),
\end{equation*}
and, analogously, $\sum_{\mu\in \Fer(r',1,p',n')}\dim(\rho^1_{[\mu, \mu]})=\dim (\phi^1)$, we can conclude that
$$\sum_{\mu\in \Fer(r',1,p',n')}\chi^0_{[\mu,\mu]}(g)=\chi_{\phi^0}(g) \quad \mbox{ and }\quad \sum_{\mu\in \Fer(r',1,p',n')}
\chi^1_{[\mu,\mu]}(g)=\chi_{\phi^1}(g).$$ 
Recalling that $\rho^1(g)=\varrho(g)|_{M^1}$, the above equality means that
$$(M^1,\varrho)\cong \bigoplus_{\mu \vdash m}
\rho^1_{[\mu, \mu]},$$ and Theorem \ref{AsymGrpn} is proved.
\end{proof}

\section{The antisymmetric classes}\label{anclas}
An antisymmetric element of $G(r,n)$ can be characterized by the structure of its cycles, namely, an element $v\in G(r,n)$ is antisymmetric if and only if every cycle $c$ of $v$ has length 2 and is of the form $c=(a_1^{z_{a_1}},a_2^{z_{a_2}})$ with $z_{a_2}=z_{a_1}+r'$. We say that the residue class of $z_{a_1}$ and $z_{a_2}$ modulo $r'$ is the \emph{type} of $c$.   If the number of disjoint cycles of type $i$ of an antisymmetric element $v$ of $G(r,n)$ is $t_i$, then the  integer vector $\tau(v)=(t_0,\ldots,t_{r'-1})$ is called the \emph{type} of $v$. It is clear that two antisymmetric elements in $G(r,n)$ are $S_n$-conjugate if and only if they have the same type. We denote by $AC(r,n)$ the set of types of antisymmetric elements in $G(r,n)$, i.e. the set of vectors $(t_0,\ldots,t_{r'})$ with nonnegative integer entries such that $t_0+\cdots+t_{r'-1}=n'$.  If $\GCD(p,n)=2$  we let  $\gamma$ be the cyclic permutation of $AC(r,n)$ defined by $\gamma(t_0,\ldots,t_{r'-1})=(t_{r/p},t_{1+r/p}\ldots,t_{r'-1+r/p})$ where the indices must be intended as elements in $\mathbb Z_r$. We observe that $\gamma$ has order $p'$ and so we have an action of the cyclic group $C_p'$ generated by $\gamma$ on $AC(r,n)$.  We denote the quotient set by $AC(r,p,n)^*$. The type of an antisymmetric element of $G(r,p,n)^*$ is then an element of $AC(r,p,n)^*$ and if $[\tau]\in AC(r,p,n)^*$ we let $c^{1}_{[\tau]}$ be the $S_n$-conjugacy class consisting of the antisymmetric absolute involutions in $G(r,p,n)^*$ of type $[\tau]$. The main result of this section, Theorem \ref{mainanticlass}, provides a compatibility between the coefficients of $\tau$ and the sizes of the indices of the irreducible components of the module $M(c^1_{[\tau]})$. For this, it will be helpful the following criterion.
\begin{prop}\label{crit}
   Let $\nu=(n_0,\ldots,n_{r-1})$ be a composition of $n$ into $r$ parts, and $\rho$ a representation of $G(r,n)$. Then the following are equivalent:
\begin{enumerate}
 \item The irreducible subrepresentations of $\rho$ are all of the form $\rho_{\lambda^{(0)}, \ldots, \lambda^{(r-1)}}$ with $|\lambda^{(i)}|=n_i$ for all $i\in [0,r-1]$;
\item There exists a representation $\phi$ of $G(r,\nu)$ such that $\rho=\Ind_{G(r,\nu)}^{G(r,n)}(\phi)$ and $\phi(g)=\zeta_r^{\sum iz(g_i)}\phi(|g|)$, for all $g=(g_1,\ldots,g_{r-1})\in G(r,\nu)$.
\end{enumerate}
\end{prop}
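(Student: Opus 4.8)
The plan is to reduce both implications to Proposition \ref{rapp di grn} by recognizing that the functional equation in condition (2) is exactly the one characterizing the representations of $G(r,\nu)$ that serve as inducing data for the $\rho_\lambda$ with prescribed block sizes. The starting remark is that $g\mapsto \zeta_r^{\sum_i iz(g_i)}$ is a one-dimensional character of $G(r,\nu)$: the color $z(\cdot)$ is additive on each factor $G(r,n_i)$, so $\sum_i iz(g_i)$ is a homomorphism $G(r,\nu)\to\mathbb Z_r$. Hence (2) says precisely that $\phi=\chi_\nu\otimes\tilde\psi$, where $\chi_\nu(g)=\zeta_r^{\sum_i iz(g_i)}$, $\psi\eqdef\phi|_{S_\nu}$ is the restriction to the permutation subgroup $S_\nu=S_{n_0}\times\cdots\times S_{n_{r-1}}$, and $\tilde\psi(g)\eqdef\psi(|g|)$.

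The first step I would carry out is to make this correspondence precise and show it is a bijection. Given any representation $\psi$ of $S_\nu$, the formula $\phi(g)\eqdef\zeta_r^{\sum_i iz(g_i)}\psi(|g|)$ defines a representation of $G(r,\nu)$: the identity $\phi(gh)=\phi(g)\phi(h)$ follows at once from $z((gh)_i)=z(g_i)+z(h_i)$ in $\mathbb Z_r$ together with $|gh|=|g||h|$. Conversely, a $\phi$ satisfying (2) is recovered from $\psi=\phi|_{S_\nu}$ by the same formula. Since tensoring by the one-dimensional $\chi_\nu$ and inflating along the surjection $g\mapsto|g|$ both preserve the lattice of invariant subspaces, $\phi$ is irreducible (resp. decomposes as a given direct sum) if and only if $\psi$ is (resp. does). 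Crucially, when $\psi=\bigodot_{i}\rho_{\lambda^{(i)}}$ is an external product of Specht modules of the $S_{n_i}$, the twist $\chi_\nu\otimes\tilde\psi$ is verbatim $\bigodot_{i=0}^{r-1}(\gamma_{n_i}^{\otimes i}\otimes\tilde\rho_{\lambda^{(i)}})$, the inducing representation of $\rho_{\lambda^{(0)},\ldots,\lambda^{(r-1)}}$ in Proposition \ref{rapp di grn}.

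For (2)$\Rightarrow$(1), I would decompose $\psi=\phi|_{S_\nu}$ into irreducibles of $S_\nu$, each of the form $\bigodot_i\rho_{\lambda^{(i)}}$ with $\lambda^{(i)}\vdash n_i$, apply the previous step to express $\phi$ as the corresponding direct sum of inducing representations, and then induce. Additivity of induction together with Proposition \ref{rapp di grn} yields $\rho=\Ind_{G(r,\nu)}^{G(r,n)}(\phi)\cong\bigoplus_k\rho_{\lambda_k}$ with every $\lambda_k=(\lambda_k^{(0)},\ldots,\lambda_k^{(r-1)})$ satisfying $|\lambda_k^{(i)}|=n_i$; since each $\rho_{\lambda_k}$ is irreducible these are exactly the constituents of $\rho$, so (1) holds. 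For the converse (1)$\Rightarrow$(2), I would write $\rho\cong\bigoplus_k\rho_{\lambda_k}$ with $|\lambda_k^{(i)}|=n_i$, express each $\rho_{\lambda_k}$ as $\Ind_{G(r,\nu)}^{G(r,n)}(\phi_k)$ with $\phi_k$ the inducing representation of Proposition \ref{rapp di grn} (which obeys the functional equation by the computation above), and set $\phi\eqdef\bigoplus_k\phi_k$. Additivity of induction gives $\rho\cong\Ind_{G(r,\nu)}^{G(r,n)}(\phi)$, and a block-diagonal sum of representations each satisfying $\phi_k(g)=\zeta_r^{\sum_i iz(g_i)}\phi_k(|g|)$ satisfies the same identity, the scalar factoring out of every block.

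I expect the only delicate point to be the bookkeeping in the middle step: verifying that the character twist $\chi_\nu\otimes\tilde\psi$ reproduces the inducing datum $\bigodot_i(\gamma_{n_i}^{\otimes i}\otimes\tilde\rho_{\lambda^{(i)}})$ of Proposition \ref{rapp di grn} on the nose, and that inflation followed by a one-dimensional twist alters neither irreducibility nor the decomposition type. Once this identification is in place, both implications become formal consequences of the additivity of induction and the explicit classification in Proposition \ref{rapp di grn}.
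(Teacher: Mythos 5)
Your proposal is correct and follows essentially the same route as the paper's proof: both reduce to irreducible pieces, identify condition (2) as saying that $\phi$ is the $\chi_\nu$-twist of the inflation of its restriction to $S_\nu$, and then invoke the explicit description of Proposition \ref{rapp di grn}. Your write-up merely makes explicit the bookkeeping (additivity of induction, preservation of invariant subspaces under twisting and inflation) that the paper compresses into ``we can assume $\rho$ (resp.\ $\phi$) is irreducible.''
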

\begin{proof}
 In proving that (1) implies (2) we can clearly assume that $\rho$ is irreducible and in this case the result is straightforward from the description in Proposition \ref{rapp di grn}.
In proving that (2) implies (1) we can assume that $\phi$ is irreducible. Then it is clear that $\phi\downarrow_{S_{\nu}}$ is also irreducible, where $S_\nu=S_{n_0}\times \cdots \times S_{n_{r-1}}$. In particular there exist $\lambda^{(0)},\ldots, \lambda^{(r-1)}$, with  $|\lambda^{(i)}|=n_i$ such that $\phi\downarrow_{S_{\nu}}\cong \rho_{\lambda^{(0)}}\odot\cdots\odot \rho_{\lambda^{(r-1)}}$. Now we can conclude that
$$
\phi\cong (\gamma_{n_0}^0\otimes \tilde \rho_{\lambda^{(0)}})\odot \cdots \odot (\gamma_{n_{r-1}}^{r-1}\otimes \tilde \rho_{\lambda^{(0)}}),
$$
and so the result follows again from Proposition \ref{rapp di grn}.
\end{proof}

We now concentrate on the special case $p=2$, so that $p'=1$. The general case will then be a direct consequence. Since the index of $G(r,2,n)$ in $G(r,n)$ is 2 the induction $\psi=\Ind_{G(r,2,n)}^{G(r,n)}(M(c^1_{\tau}),\varrho)$ of  the $G(r,2,n)$-representation $M(c^1_{\tau})$ to $G(r,n)$ is a representation on the direct sum $V\oplus V'$ of two copies of $V\eqdef M(c^1_{\tau})$.  So  a basis of $V\oplus V'$ consists of all the elements $C_v$, $C'_v$, as  $v$ varies in $c^1_\tau$. If $x=[1^{r-1},2^0,\ldots,n^0]$ is taken as a representative of the nontrivial coset of $G(r,2,n)$ and we impose that $x\cdot C_v=C'_v$, the representation $\psi$ of $G(r,n)$ on $V\oplus V'$ will be as follows
 
$$
g\cdot C_v=\left\{ \begin{array}{ll} \zeta_r^{\langle g,\tilde v\rangle}\zeta_r^{z_1(\tilde v)-z_{|g|^{-1}(1)}(\tilde v)}C_{|g|v|g|^{-1}} &  \textrm{if }g\in G(r,2,n),\\ \zeta_r^{\langle g,\tilde v\rangle}\zeta_r^{z_1(\tilde v)}C'_{|g|v|g|^{-1}}& \textrm{if }g\notin G(r,2,n),\end{array}\right.
$$
and
$$
g\cdot C'_v=\left\{ \begin{array}{ll} \zeta_r^{\langle g,\tilde v\rangle}C'_{|g|v|g|^{-1}} &  \textrm{if }g\in G(r,2,n),\\ \zeta_r^{\langle g,\tilde v\rangle}\zeta_r^{-z_{|g|^{-1}(1)}(\tilde v)}C_{|g|v|g|^{-1}}& \textrm{if }g\notin G(r,2,n);\end{array}\right.
$$
where $\tilde v$ is any lift of $v$ in $G(r,n)$.
Now we want to show that this representation $\psi$ of $G(r,n)$ is actually also induced from a particular representation of $G(r,(\tau,\tau))\eqdef G(r,t_0)\times\cdots G(r,t_{r'-1})\times G(r,t_0)\times\cdots G(r,t_{r'-1})$. With this in mind we  let $\mathcal C$ be the set of elements $v\in c^1_\tau$ having a lift $\tilde v$ in $G(r,n)$ satisfying the following condition: if $(a^i,b^{i+r'})$ is a cycle of $\tilde v$ of type $i$, then $a\in [t_0+\cdots+t_{i-1}+1,t_0+\cdots+t_{i}]$ and $b\in [n'+t_0+\cdots+t_{i-1}+1,n'+t_0+\cdots+t_{i}]$. Then, if $z\eqdef\min\{j:\,t_j\neq 0\}$ we let 
$$W\eqdef \bigoplus_{v\in \mathcal C} \mathbb C(C_v+\zeta_r^zC'_v)\subseteq V\oplus V'.$$ 
\begin{lem}\label{inva}
   The subspace $W$ is invariant by the restriction of $\psi$ to $G(r,(\tau,\tau))$.
\end{lem}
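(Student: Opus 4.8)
The plan is to fix $g\in G(r,(\tau,\tau))$ and a spanning vector $C_v+\zeta_r^zC'_v$ of $W$ (so $v\in\mathcal C$, with its \emph{distinguished} lift $\tilde v$ satisfying the block condition), and to read off directly from the formulas defining $\psi$ that $\psi(g)(C_v+\zeta_r^zC'_v)$ is again of this shape. The first, purely combinatorial, step records how $g$ interacts with $\mathcal C$. Since $g$ lies in the Young subgroup $G(r,(\tau,\tau))$, the permutation $|g|$ stabilizes each of the $2r'$ intervals $B_i=[t_0+\cdots+t_{i-1}+1,\,t_0+\cdots+t_i]$ and $B'_i=n'+B_i$. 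Hence if $(a^i,b^{i+r'})$ is a type-$i$ cycle of $\tilde v$ with $a\in B_i$, $b\in B'_i$, then $|g|\tilde v|g|^{-1}$ has the type-$i$ cycle $(|g|(a)^i,|g|(b)^{i+r'})$ with $|g|(a)\in B_i$ and $|g|(b)\in B'_i$. Consequently $w\eqdef|g|v|g|^{-1}$ again lies in $\mathcal C$, and $|g|\tilde v|g|^{-1}$ is its distinguished lift; this lift is unique, since the only competing lift $\zeta_r^{r'}\tilde v$ fails the block condition (it interchanges the roles of the two rows), so the identification is unambiguous.

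The second, numerical, step is the key observation. In any $v\in\mathcal C$ every element of $B_i$ is the first entry of a type-$i$ cycle and therefore has color $i$ in $\tilde v$. Since index $1$ lies in the first nonempty block $B_z$ (recall $z=\min\{j:t_j\neq0\}$), this gives $z_1(\tilde v)=z$; and because $|g|$ stabilizes $B_z$ we also have $|g|^{-1}(1)\in B_z$, whence $z_{|g|^{-1}(1)}(\tilde v)=z$ as well. This equality is exactly what makes the two cases below collapse onto the defining combination $C_w+\zeta_r^zC'_w$.

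It then remains to substitute into the explicit description of $\psi$, distinguishing whether or not $g\in G(r,2,n)$. If $g\in G(r,2,n)$, the exponent $z_1(\tilde v)-z_{|g|^{-1}(1)}(\tilde v)=a(g,v)$ vanishes by the previous step, so $\psi(g)(C_v)=\zeta_r^{\langle g,\tilde v\rangle}C_w$ and $\psi(g)(C'_v)=\zeta_r^{\langle g,\tilde v\rangle}C'_w$, giving $\psi(g)(C_v+\zeta_r^zC'_v)=\zeta_r^{\langle g,\tilde v\rangle}(C_w+\zeta_r^zC'_w)$. If $g\notin G(r,2,n)$, the two summands swap between the $C$- and $C'$-bases, and using $z_1(\tilde v)=z_{|g|^{-1}(1)}(\tilde v)=z$ one finds
$$\psi(g)(C_v+\zeta_r^zC'_v)=\zeta_r^{\langle g,\tilde v\rangle}\big(\zeta_r^zC'_w+\zeta_r^z\zeta_r^{-z}C_w\big)=\zeta_r^{\langle g,\tilde v\rangle}(C_w+\zeta_r^zC'_w).$$
In both cases the image is a scalar multiple of $C_w+\zeta_r^zC'_w$ with $w\in\mathcal C$, hence lies in $W$, which proves the invariance.

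I expect the genuinely delicate point to be the bookkeeping with lifts rather than any single computation: the individual colors $z_1(\tilde v)$ and $z_{|g|^{-1}(1)}(\tilde v)$ are not invariant under the scalar ambiguity $C_2=\{1,\zeta_r^{r'}\}$, and only the specific exponent combinations appearing in $\psi$ (together with the uniqueness of the block-condition lift) are well defined. I would therefore make explicit that the distinguished lift of each $v\in\mathcal C$ is used throughout, and that with this choice $z_1(\tilde v)=z_{|g|^{-1}(1)}(\tilde v)=z$; once this is pinned down, the remainder is the direct substitution above.
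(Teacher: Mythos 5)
Your proof is correct and follows essentially the same route as the paper's: the same two key observations (that $|g|v|g|^{-1}\in\mathcal C$ and that $z_1(\tilde v)=z_{|g|^{-1}(1)}(\tilde v)=z$) followed by the same two-case substitution into the formulas for $\psi$, according to whether $g\in G(r,2,n)$ or not. Your treatment is in fact somewhat more explicit than the paper's (the block-stabilization argument and the uniqueness of the distinguished lift are left implicit there), but it is the same argument.
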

\begin{proof}
   It is clear that if $v\in \mathcal C $ and $g\in G(r,(\tau,\tau))$, then $|g|v|g|^{-1}\in \mathcal C$.  We observe that, by definition, $|g|$ permutes the elements in $\tilde v$ having the same color, and in particular $z_1(\tilde v)=z_{|g|^{-1}(1)}(\tilde v)$. Moreover, by definition, we also have $z_1(\tilde v)=z$. In particular, if $g\in G(r,2,n)\cap G(r,(\tau,\tau))$ we have
\begin{eqnarray*}
   g\cdot (C_v+\zeta_r^{z}C'_v)&=&\zeta_r^{\langle g,\tilde v\rangle}\zeta_r^{z_1(\tilde v)-z_{|g|^{-1}(1)}(\tilde v)}C_{|g|v|g|^{-1}}+\zeta_r^{z}\zeta_r^{\langle g,\tilde v\rangle}C'_{|g|v|g|^{-1}}\\
&=& \zeta_r^{\langle g,\tilde v\rangle}(C_{|g|v|g|^{-1}}+\zeta_r^{z}C'_{|g|v|g|^{-1}}),
\end{eqnarray*}
and if $g\in G(r,(\tau,\tau)$ but $g\notin G(r,2,n)$ we have
\begin{eqnarray*}
   g\cdot (C_v+\zeta_r^{z}C'_v)&=&\zeta_r^{\langle g,\tilde v\rangle}\zeta_r^{z_1(\tilde v)} C'_{|g|v|g|^{-1}}+\zeta_r^{z}\zeta_r^{\langle g,\tilde v\rangle}\zeta_r^{-z_{|g|^{-1}(1)}(\tilde v)}C_{|g|v|g|^{-1}}\\
&=& \zeta_r^{\langle g,\tilde v\rangle}(\zeta_r^{z}C'_{|g|v|g|^{-1}}+C_{|g|v|g|^{-1}}).
\end{eqnarray*}
The proof is now complete.
\end{proof}
\begin{lem}We have $V\oplus V'=\Ind_{G(r,(\tau,\tau))}^{G(r,n)}W$.   \end{lem}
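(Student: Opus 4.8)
The plan is to produce the natural candidate isomorphism and then pin it down by a dimension count together with surjectivity. By Lemma \ref{inva} the inclusion $W\hookrightarrow V\oplus V'$ is a morphism of $G(r,(\tau,\tau))$-modules, so by Frobenius reciprocity it extends to a $G(r,n)$-equivariant map $\Phi\colon \Ind_{G(r,(\tau,\tau))}^{G(r,n)}W\to V\oplus V'$, explicitly $\Phi(g\otimes w)=\psi(g)w$. Its image is the $G(r,n)$-submodule $\sum_\gamma\psi(\gamma)W$ generated by $W$. Since I will check that domain and codomain have the same dimension, it then suffices to prove that $\Phi$ is surjective, i.e. that the translates $\psi(\gamma)W$ span $V\oplus V'$.

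For the dimension count, write $\nu=(t_0,\ldots,t_{r'-1},t_0,\ldots,t_{r'-1})$, so that $G(r,(\tau,\tau))=G(r,\nu)$ and $[G(r,n):G(r,\nu)]=n!/(\prod_i t_i!)^2$ (coset representatives may be taken inside $S_n$, since $G(r,\nu)$ already contains the full color part on each block). A short count shows that the number of antisymmetric elements of $G(r,n)$ of type $\tau$ is $N=n!/\prod_i t_i!$: partition $[n]$ into the color classes of sizes $t_i$ and match, for each $i$, the color-$i$ elements with the color-$(i+r')$ ones. Passing to $G(r,n)/\pm I$ identifies $\tilde v$ with $-\tilde v$, so $|c^1_\tau|=N/2$ and $\dim(V\oplus V')=2|c^1_\tau|=N$. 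On the other hand $|\mathcal C|=\prod_i t_i!$, since a lift in $\mathcal C$ amounts to a choice, for each $i$, of a bijection from the block $B_i\subseteq[n']$ to the block $B_i'\subseteq[n'+1,n]$; hence $\dim\Ind W=[G(r,n):G(r,\nu)]\cdot|\mathcal C|=N$, matching.

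The heart is surjectivity. Taking the coset representatives $\sigma$ inside $S_n$ (so $\sigma\in G(r,2,n)$ with trivial colors), the first action formula for $\psi$ gives $\psi(\sigma)(C_w+\zeta_r^zC'_w)=\zeta_r^{z_1(\tilde w)-z_{\sigma^{-1}(1)}(\tilde w)}C_{\sigma w\sigma^{-1}}+\zeta_r^{z}C'_{\sigma w\sigma^{-1}}$. I will argue by a sorting argument that $(\sigma,\tilde w)\mapsto\sigma\tilde w\sigma^{-1}$ is a bijection from (coset representatives)$\times$($\mathcal C$-lifts) onto the antisymmetric elements of type $\tau$ in $G(r,n)$: any such element is carried to a unique $\mathcal C$-lift by placing its color-$i$ entries into $B_i$ and its color-$(i+r')$ entries into $B_i'$, and this determines $\sigma$ modulo $S_\nu$. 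Consequently each $v\in c^1_\tau$ receives exactly two of these pairs, corresponding to its two lifts $\tilde v$ and $-\tilde v$. For both, the $C'$-coefficient equals $\zeta_r^{z}$, whereas the two $C$-coefficients are $\zeta_r^{z-z_1(\tilde v)}$ and $\zeta_r^{z-z_1(-\tilde v)}$: indeed $z_1(\tilde w)=z=\min\{j:t_j\neq0\}$ for every $\mathcal C$-lift, since $1$ lies in the first nonempty block $B_z$, and $z_{\sigma^{-1}(1)}(\tilde w)=z_1(\sigma\tilde w\sigma^{-1})$ by conjugation. As $z_1(-\tilde v)=z_1(\tilde v)+r'$, the two $C$-coefficients differ by the factor $\zeta_r^{r'}=-1$, so the two resulting vectors are linearly independent and span the plane $\langle C_v,C'_v\rangle$. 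Ranging over $v$, these planes exhaust $V\oplus V'$, which gives surjectivity and hence the desired isomorphism.

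The step I expect to be the main obstacle is precisely the bookkeeping in this last paragraph: proving cleanly that the conjugation map is a bijection, so that exactly two pairs hit each $v$, and tracking the scalars $\zeta_r^{z-z_{\sigma^{-1}(1)}(\tilde w)}$ carefully enough to see that the choice $z=\min\{j:t_j\neq0\}$ and the placement of $1$ in the first block force the two contributions to $\langle C_v,C'_v\rangle$ to be linearly independent rather than proportional.
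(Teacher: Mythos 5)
Your proof is correct, and it shares the paper's skeleton: both arguments compute $[G(r,n):G(r,(\tau,\tau))]=n!/(\tau!)^2$, $\dim(V\oplus V')=n!/\tau!$ and $\dim W=\tau!$, so that the dimension match reduces the lemma to showing that the $G(r,n)$-translates of $W$ span $V\oplus V'$; and in both arguments the spanning ultimately comes from the sign $\zeta_r^{r'}=-1$ distinguishing the two lifts $\pm\tilde v$, which produces two vectors $\pm\alpha C_v+\beta C'_v$ spanning the plane $\langle C_v,C'_v\rangle$. Where you genuinely diverge is in how much of the basis must be touched. The paper observes that it suffices to put a \emph{single} $C_v$ inside $G(r,n)W$ --- since $S_n$ acts transitively on $c^1_\tau$ and $x\cdot C_v=C'_v$, the $G(r,n)$-orbit of one basis line meets every basis line --- and then performs exactly one computation, with the half-swapping permutation $\sigma=[n'+1,\ldots,n,1,\ldots,n']$, which stabilizes $\mathcal C$ and sends a $W$-type vector to $\zeta_r^{z}C_{v^\sigma}-C'_{v^\sigma}$; paired with the $W$-vector indexed by $v^\sigma$, this frees both $C_{v^\sigma}$ and $C'_{v^\sigma}$. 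You instead prove spanning globally: your key step is the bijection $(\sigma,\tilde w)\mapsto\sigma\tilde w\sigma^{-1}$ from (coset representatives in $S_n$) $\times$ ($\mathcal C$-lifts) onto the antisymmetric elements of type $\tau$ in $G(r,n)$, from which each plane $\langle C_v,C'_v\rangle$ receives exactly two translated vectors, one per lift $\pm\tilde v$, and these are independent because $z_1(-\tilde v)=z_1(\tilde v)+r'$. Your route costs more bookkeeping (the bijection, and the identity $z_{\sigma^{-1}(1)}(\tilde w)=z_1(\sigma\tilde w\sigma^{-1})$), but it buys a completely explicit surjectivity argument that never invokes the transitivity reduction, which the paper uses without comment; it also has the minor merit that your normalization of the $W$-basis vectors $C_w+\zeta_r^zC'_w$ is used consistently throughout, whereas the paper's final computation silently switches to $\zeta_r^zC_v+C'_v$.
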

\begin{proof}
 For this we need to prove that 
\begin{equation} \label{definduct}V\oplus V'=\bigoplus_{g\in K}g\cdot W,\end{equation}

where $K$ is any set of coset representatives of $G(r,(\tau,\tau))$ in $G(r,n)$. But
$$
[G(r,n):G(r,(\tau,\tau))]=\frac{n!r^n}{(\tau!)^2r^n}=\frac{n!}{(\tau !)^2},
$$
where $\tau !=t_0!\cdots t_{r'-1}!$. Moreover
$\dim(V\oplus V')=2\dim V=2 \frac{\binom{n}{2}\binom{n-2}{2}\cdots \binom{n}{2}2^{n'}
}{\tau!2}=\frac{n!}{\tau!}$
and
$\dim W=\tau!$, so that $
[G(r,n):G(r,(\tau,\tau))]=\frac{\dim (V\oplus V')}{\dim W},
$
and hence to prove \eqref{definduct} it is enough to show that $V\oplus V'\subset G(r,n)W$, which can proved by simply showing that  $C_v\in G(r,n)W$ for some $C_v$. To show this we take $\sigma=[n'+1,n'+2,\ldots,n,1,2,\ldots,n']\in S_n$. Then it follows that conjugation by $\sigma$ stabilizes $\mathcal C$, although $\sigma\notin G(r,(\tau,\tau))$. Then we have 
\begin{eqnarray*}
   \sigma\cdot (\zeta_r^zC_v+C'_v)&=&\zeta_r^zC_{v^\sigma}+\zeta_r^{z-z_{n'+1}(\tilde v)}C_{v^\sigma}'\\
&=& \zeta_r^zC_{v^\sigma}+\zeta_r^{r'(\tilde v)}C_{v^\sigma}'\\
&=& \zeta_r^zC_{v^\sigma}-C_{v^\sigma}'.
\end{eqnarray*} 
Since also $\zeta_r^zC_{v^\sigma}+C_{v^\sigma}'\in W$ we conclude that both $C_{v^\sigma}$ and $C_{v^\sigma}'$ belong to $G(r,n)W$ and the proof is complete.
\end{proof}
We are now ready to prove the main result of this section.
\begin{thm} \label{mainanticlass} Let $\GCD(p,n)=2$ and $[\tau]=[t_0,\ldots,t_{r'}]\in AC(r,p,n)^*$. Then
$$
M(c^{1}_{[\tau]})=\bigoplus_{\substack{[\lambda^{(0)},\ldots,\lambda^{(r')}]\in \Fer(r',1,p',n'):\\|\lambda^{(i)}|=t_i\,\forall i\in [0,r'-1]}}\rho^{1}_{[\lambda^{(0)},\ldots,\lambda^{(r')},\lambda^{(0)},\ldots,\lambda^{(r')}]}.
$$
\end{thm}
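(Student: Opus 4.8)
The plan is to establish the theorem first in the special case $p=2$ (where $p'=1$, so that $[\tau]=\tau$ and $AC(r,2,n)^*=AC(r,n)$), and then to obtain the general statement by Clifford theory applied to the normal inclusion $G(r,p,n)\trianglelefteq G(r,2,n)$. In the case $p=2$ I work with the induced representation $\psi=\Ind_{G(r,2,n)}^{G(r,n)}(M(c^1_\tau),\varrho)$ realized on $V\oplus V'$, for which the two preceding lemmas already furnish the identification $V\oplus V'=\Ind_{G(r,(\tau,\tau))}^{G(r,n)}W$.

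The first step is to apply the criterion of Proposition \ref{crit} to $\psi$, taking the composition $\nu=(\tau,\tau)=(t_0,\ldots,t_{r'-1},t_0,\ldots,t_{r'-1})$ of $n$ into $r=2r'$ parts and the $G(r,(\tau,\tau))$-module $W$. Condition (2) there demands that each $g$ act on $W$ as $\zeta_r^{\sum_k k\,z(g_k)}$ times the underlying permutation $|g|$. This is exactly where the definition of $\mathcal C$ is used: for $v\in\mathcal C$ with lift $\tilde v$, every index $j$ in the support block of the $k$-th factor $g_k$ has color $z_j(\tilde v)=k$, since the first-half blocks carry colors $0,\ldots,r'-1$ and the second-half blocks carry colors $r',\ldots,r-1$. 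Hence $\langle g,\tilde v\rangle=\sum_j z_j(\tilde g)z_j(\tilde v)=\sum_{k=0}^{r-1}k\,z(g_k)$, and Lemma \ref{inva} shows precisely that $g$ acts on the spanning vectors $C_v+\zeta_r^{z}C'_v$ of $W$ by this scalar times $|g|v|g|^{-1}$. Proposition \ref{crit} then guarantees that every irreducible constituent $\rho_{\lambda^{(0)},\ldots,\lambda^{(r-1)}}$ of $\psi$ satisfies $|\lambda^{(i)}|=t_i$ and $|\lambda^{(r'+i)}|=t_i$ for all $i\in[0,r'-1]$.

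The second step identifies these constituents. By Theorem \ref{AsymGrpn}, $M(c^1_\tau)$ is a multiplicity-free sum of representations $\rho^1_{[\mu,\mu]}$; and since $G(r,2,n)$ is normal of index $2$ in $G(r,n)$ with the two split pieces $\rho^0_{[\mu,\mu]},\rho^1_{[\mu,\mu]}$ interchanged by the nontrivial coset, Clifford theory gives $\Ind_{G(r,2,n)}^{G(r,n)}\rho^1_{[\mu,\mu]}=\rho_{\mu,\mu}$. Thus $\psi=\bigoplus_{\mu\in S}\rho_{\mu,\mu}$, where $S$ indexes the pieces occurring in $M(c^1_\tau)$. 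Each $\rho_{\mu,\mu}=\rho_{\mu^{(0)},\ldots,\mu^{(r'-1)},\mu^{(0)},\ldots,\mu^{(r'-1)}}$ already carries the repeated shape forced by antisymmetry, so the size constraint of the first step reads $|\mu^{(i)}|=t_i$ for all $i$; that is, $S\subseteq\{\mu:\tau(\mu)=\tau\}$, where $\tau(\mu)\eqdef(|\mu^{(0)}|,\ldots,|\mu^{(r'-1)}|)$. Equality follows by counting: the submodules $M(c^1_\tau)$ partition $M^1$, the decomposition of $M^1$ in Theorem \ref{AsymGrpn} is multiplicity-free, and the sets $\{\mu:\tau(\mu)=\tau\}$ partition its index set; comparing dimensions summand by summand yields $M(c^1_\tau)=\bigoplus_{\tau(\mu)=\tau}\rho^1_{[\mu,\mu]}$, which is the desired formula for $p=2$.

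Finally, for general $p$ with $\GCD(p,n)=2$ I pass through $G(r,p,n)\trianglelefteq G(r,2,n)$ with cyclic quotient $C_{p'}$. The shift $\gamma$ defining $AC(r,p,n)^*$ and the shift defining $\Fer(r',1,p',n')$ are both shifts by $r'/p'$, so $[\mu]\mapsto[\tau(\mu)]$ is $C_{p'}$-equivariant and descends to a well-defined map $\Fer(r',1,p',n')\to AC(r,p,n)^*$. Restricting the $G(r,2,n)$-modules $M(c^1_\sigma)$ with $\sigma$ in the orbit $[\tau]$ down to $G(r,p,n)$, and using that $\rho^1_{[\mu,\mu]}$ for $G(r,2,n)$ restricts to $\rho^1_{[\mu,\mu]}$ for $G(r,p,n)$ (now indexed by the class $[\mu]\in\Fer(r',1,p',n')$), the case $p=2$ folds into the stated decomposition, the multiplicities being controlled by the $C_{p'}$-orbit sizes and pinned down by a dimension count via Proposition \ref{casym}. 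The main obstacle is the verification in the first step, namely that the colors of the distinguished lifts in $\mathcal C$ collapse $\langle g,\tilde v\rangle$ exactly to $\sum_k k\,z(g_k)$ so that Proposition \ref{crit} becomes applicable; the Clifford-theoretic bookkeeping of the last paragraph is conceptually routine but requires careful tracking of the $C_{p'}$-orbits.
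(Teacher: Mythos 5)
Your proposal is correct and follows essentially the same route as the paper: the case $p=2$ is handled by verifying condition (2) of Proposition \ref{crit} for the $G(r,(\tau,\tau))$-module $W$ (using the color structure of the distinguished lifts in $\mathcal C$) and then invoking Theorem \ref{AsymGrpn} together with the multiplicity-free partition argument, while the general case $\GCD(p,n)=2$ is reduced to $p=2$ via the inclusion $G(r,p,n)\subseteq G(r,2,n)$ and the fact that split irreducibles restrict compatibly. Your write-up actually makes explicit some steps (the Clifford-theoretic identification $\Ind_{G(r,2,n)}^{G(r,n)}\rho^1_{[\mu,\mu]}=\rho_{\mu,\mu}$ and the final counting) that the paper compresses into ``the result is a straightforward consequence of Theorem \ref{AsymGrpn}.''
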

If $p=2$ we need to give a closer look at the $G(r,(\tau,\tau))$-representation $W$. From the proof of Lemma \ref{inva} we have  
that $gD_v=\zeta_r^{\langle g,v\rangle} D_{|g|v|g|^{-1}}=\prod _i\zeta_r^{iz(g_i)}D_{|g|v|g|^{-1}}$, where $D_v\eqdef C_v+\zeta_r^{z}C'_v$ for $v\in \mathcal C$ are the basis elements of $W$. In particular, the conditions of Proposition \ref{crit} are satisfied and the result is a straghtforward consequence of Theorem \ref{AsymGrpn}. 

If $p>2$ we simply have to observe that the $G(r,p,n)$-module $M(c^1_{[\tau]})$ is a quotient of the restriction to $G(r,p,n)$ of the $G(r,2,n)$-module $M(c^1_{\tau})$. Since $\GCD(p,n)=2$, the irreducible representations of $G(r,2,n)$ restricted to $G(r,p,n)$ remain irreducible (and are indexed in the ``same'' way). The result is then a consequence of the case $p=2$ and Theorem \ref{AsymGrpn}.

\section{The symmetric classes}\label{Sym}

In this section we complete our discussion with the description of the $G(r,p,n)$-module $M(c)$, where $c$ is any $S_n$-conjugacy class of symmetric absolute involutions in $G(r,p,n)^*$. Despite the case $p=1$ considered in \cite{CaFu} and the case of antisymmetric classes treated in \S \ref{anclas}, where a self-contained proof of the irreducible decomposition of the module $M(c)$ was given, we will need here to make use of all the main results that we have  obtained so far, namely the construction of the complete Gelfand model in \cite{Ca2}, the study of the submodules $M(c)$ for wreath products \cite{CaFu}, as well as the  discussion of the antisymmetric submodule in \S \ref{Asym}.

We first observe that, by Theorems \ref{modello} and \ref{AsymGrpn} the symmetric submodule has the following decomposition into irreducible
$$
M^0\cong\bigoplus_{[\lambda]\in \Fer(r,1,p,n)}\rho_{[\lambda]}^0.
$$

If $v$ is a symmetric absolute involution in $G(r,p,n)^*$  we denote by $\Sh(v)$ the element of $\Fer(r,1,p,n)$ which is the shape of the multitableaux of the image of $v$, via the projective Robinson-Schensted correspondence. Namely, we let
$$\Sh(v)\eqdef[\lambda]\in \Fer(r,1,p,n),$$
 where
$$v\stackrel{RS}{\longrightarrow}([P],[P]),$$
with $[P]\in \St_{[\lambda]}$.
For notational convenience, if $c$ is a $S_n$-conjugacy class of symmetric absolute involutions in $G(r,p,n)^*$ we also let $\Sh(c)=\cup_{v\in c}\Sh(v)\subset \Fer(r,1,,n)$.

We are now ready to state the main result of this section.
\begin{thm}\label{mainsym}
Let $c$ be a $S_n$-conjugacy class of symmetric absolute involutions in $G(r,p,n)^*$ and $\GCD(p,n)=1,2$. Then
the following decomposition holds:
$$M(c) \cong \bigoplus_{[\lambda]\in \Sh(c)}\rho^0_{[\lambda]}.$$
\end{thm}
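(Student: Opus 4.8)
The plan is to reduce the statement to the already-established decomposition of the symmetric submodule $M^0\cong\bigoplus_{[\lambda]\in\Fer(r,1,p,n)}\rho^0_{[\lambda]}$ together with the wreath-product result, Theorem~\ref{wreath}. The starting observation is that, since $c$ consists of symmetric involutions, $M(c)$ is a submodule of $M^0$; as $M^0$ is multiplicity-free, $M(c)$ is completely determined by the set of its irreducible constituents, and each such constituent is necessarily of the form $\rho^0_{[\lambda]}$ and never $\rho^1_{[\lambda]}$, the latter occurring only in $M^1$ by Theorem~\ref{AsymGrpn}. Thus it suffices to identify this constituent set and show it equals $\{\rho^0_{[\lambda]}:[\lambda]\in\Sh(c)\}$.

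To control the constituents I would pass to the wreath product $G(r,n)$, of which $G(r,p,n)$ is a subgroup and $G(r,p,n)^*=G(r,n)/C_p$ a quotient. Let $c_1,\ldots,c_k$ be the $S_n$-conjugacy classes of symmetric absolute involutions in $G(r,n)$ lying over $c$ under the projection $\pi\colon G(r,n)\to G(r,n)/C_p$; since scalar multiplication and $S_n$-conjugation commute, these classes account for all $p|c|$ symmetric lifts. A direct check on the formula \eqref{action} shows that $C_w\mapsto C_{\pi(w)}$ defines a $G(r,p,n)$-equivariant surjection $\bigoplus_i\mathrm{Res}^{G(r,n)}_{G(r,p,n)}M(c_i)\twoheadrightarrow M(c)$ --- the relevant scalars $\langle g,w\rangle$ and $\inv_w(g)$ are computed from any lift and depend only on $\pi(w)$. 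Consequently every constituent of $M(c)$ already occurs in $\bigoplus_i\mathrm{Res}\,M(c_i)$. By Theorem~\ref{wreath} we have $M(c_i)\cong\bigoplus_{\mu\in\Sh(c_i)}\rho_\mu$, and restricting each $\rho_\mu$ to $G(r,p,n)$ via Theorem~\ref{irrepgrpqn} produces exactly the split pieces $\rho^j_{[\mu]}$. Intersecting with the constraint that only $j=0$ pieces survive, and using the compatibility of the projective Robinson--Schensted correspondence with $\pi$ (so that $[\Sh(w)]=\Sh(\pi(w))$ for a symmetric $w$), I obtain the inclusion of constituent sets
$$\{\text{constituents of }M(c)\}\subseteq\{\rho^0_{[\lambda]}:[\lambda]\in\Sh(c)\}.$$

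It then remains to promote this inclusion to an equality, for which I would use a dimension count: since $M(c)$ is multiplicity-free and its constituents lie in the displayed set, it is enough to verify
$$\dim M(c)=|c|=\sum_{[\lambda]\in\Sh(c)}|\St_{[\lambda]}|=\sum_{[\lambda]\in\Sh(c)}\dim\rho^0_{[\lambda]}.$$
This identity is the combinatorial heart of the argument. It follows by counting symmetric absolute involutions of class $c$ through the projective Robinson--Schensted correspondence, which matches them with symmetric pairs $([P],[P])$, $[P]\in\St_{[\lambda]}$; the bookkeeping uses the relation $|\St_{[\lambda]}|=|\St_\lambda|/m_p(\lambda)$ (read off from Theorem~\ref{irrepgrpqn}) together with the $p$-fold lifting to $G(r,n)$, where the analogous count $|c_i|=\sum_{\mu\in\Sh(c_i)}|\St_\mu|$ is supplied by Theorem~\ref{wreath}. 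Equivalently, one may finish globally: the sets $\Sh(c')$, as $c'$ ranges over the symmetric classes, partition $\Fer(r,1,p,n)$, and since $M^0=\bigoplus_{c'}M(c')$ is multiplicity-free with total constituent set indexed by all of $\Fer(r,1,p,n)$, the inclusions above are forced to be equalities simultaneously for every $c'$.

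The main obstacle is precisely this last counting step. Verifying either the dimension identity or, equivalently, that $\{\Sh(c')\}_{c'}$ partitions $\Fer(r,1,p,n)$ requires tracking the projective Robinson--Schensted correspondence across the quotient $G(r,n)\to G(r,n)/C_p$ and handling the split shapes, where $m_p(\lambda)=2$ and $[\lambda]=[\mu,\mu]$, separately --- exactly the configurations where the factor $m_p(\lambda)$ is nontrivial and where a symmetric class can meet a doubled shape. By contrast, the equivariance of $C_w\mapsto C_{\pi(w)}$ and the reduction to $\rho^0$-constituents are routine once the model \eqref{action} and the decomposition of $M^0$ are in hand.
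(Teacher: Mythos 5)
Your proposal is correct and follows essentially the same route as the paper: your equivariant surjection $C_w\mapsto C_{\pi(w)}$ is precisely the paper's Lemma \ref{quotient} (there phrased as the quotient of $\tilde M(c)$ by the span of the elements $C_v-C_{\zeta^{r/p}v}$), and your identification of the constituents via Theorems \ref{wreath} and \ref{irrepgrpqn}, followed by the exclusion of the split pieces $\rho^1_{[\lambda]}$ using Theorem \ref{AsymGrpn} and multiplicity-freeness, matches the paper step for step. Your ``global'' finish --- pairwise disjointness of the sets $\Sh(c')$ forcing all the constituent inclusions to be equalities at once --- is exactly how the paper concludes, so the Robinson--Schensted dimension count you offer as the primary finish, while workable, is an unnecessary detour.
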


Before proving this theorem we need some further preliminary observations. Fix an arbitrary $S_n$-conjugacy class $c$ of symmetric absolute involutions in $G(r,p,n)^*$, and let $c_1,\ldots,c_s$ be the $S_n$-conjugacy classes of $G(r,n)$ which are lifts of $c$ in $G(r,n)$ (one may observe that $s$ can be either $p$ or $p/2$, though this is not needed).
We will need to consider the following restriction to $G(r,p,n)$ of the submodule of the Gelfand model for $G(r,n)$ associated to the classes $c_1,\ldots,c_s$,
$$
\tilde M(c)\eqdef(M(c_1)\oplus\cdots\oplus M(c_s))\downarrow_{G(r,p,n)}
$$

Now the crucial observation is the following
\begin{lem}\label{quotient}
   The $G(r,p,n)$-module $M(c)$ is a quotient (and hence is isomorphic to a subrepresentation) of $\tilde M(c)$.
\end{lem}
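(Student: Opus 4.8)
The plan is to prove the lemma by exhibiting an \emph{explicit} surjection of $G(r,p,n)$-modules from $\tilde M(c)$ onto $M(c)$, induced by the canonical projection. Write $\pi\colon G(r,n)\to G(r,n)/C_p=G(r,p,n)^*$ for the quotient map and, for $w\in G(r,n)$, let $\bar w=\pi(w)$. Since $c$ consists of symmetric involutions and every lift of a symmetric element is again a symmetric matrix, hence an absolute involution, the preimage $\pi^{-1}(c)$ consists entirely of absolute involutions of $G(r,n)$; as $\pi$ is $S_n$-equivariant and $c$ is $S_n$-stable, this preimage is $S_n$-stable and is therefore exactly the union $c_1\cup\cdots\cup c_s$ by definition of the $c_i$. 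I would then define a linear map $\Phi\colon\tilde M(c)\to M(c)$ on basis vectors by $\Phi(C_w)=C_{\bar w}$ for $w\in c_1\cup\cdots\cup c_s$. This is manifestly surjective, because each $v\in c$ has at least one lift $w\in\pi^{-1}(c)$ and then $\Phi(C_w)=C_v$; it is of course far from injective, which is precisely why $M(c)$ comes out as a proper quotient.

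The heart of the argument is to check that $\Phi$ intertwines the two actions. Fix $g\in G(r,p,n)$ and $w\in c_1\cup\cdots\cup c_s$, and set $v=\bar w$. The key point is that $w$ may itself be used as a lift $\tilde v$ of $v$ when computing the scalar $\langle g,v\rangle$ in the $M(c)$-action; since $z(g)\equiv 0\bmod p$ this scalar is independent of the chosen lift (as recorded just before Theorem~\ref{modello}), so that $\langle g,v\rangle=\langle g,w\rangle$. Moreover $|w|=|v|$ gives $\inv_v(g)=\inv_{|v|}(|g|)=\inv_w(g)$, and $\pi(|g|\,w\,|g|^{-1})=|g|\,v\,|g|^{-1}$ because $\pi$ is a homomorphism and $|g|$ is common to both sides. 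Comparing the (symmetric branch of the) defining formula \eqref{action} for the $G(r,n)$-action on $C_w$ with the corresponding $G(r,p,n)$-formula for $C_v$, every scalar matches and one obtains $\Phi(\varrho(g)C_w)=\varrho(g)\,\Phi(C_w)$. Hence $\Phi$ is a $G(r,p,n)$-module epimorphism.

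It follows that $M(c)\cong\tilde M(c)/\ker\Phi$ is a quotient of $\tilde M(c)$, and since $\mathbb C[G(r,p,n)]$ is semisimple the quotient is isomorphic to a subrepresentation, which is exactly the assertion. (This is what makes the lemma useful for Theorem~\ref{mainsym}: the decomposition of each $M(c_i)$ as a $G(r,n)$-module is known from Theorem~\ref{wreath}, hence that of $\tilde M(c)$ as a $G(r,p,n)$-module via Theorem~\ref{irrepgrpqn}, so embedding $M(c)$ into $\tilde M(c)$ constrains which irreducibles may occur.)

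I expect the only genuinely delicate point to be the scalar bookkeeping in the intertwining step, and specifically the identity $\zeta_r^{\langle g,v\rangle}=\zeta_r^{\langle g,w\rangle}$ relating the two models. This is exactly where the hypothesis $g\in G(r,p,n)$, i.e. $p\mid z(g)$, is indispensable: replacing a lift $w$ by another lift $w'=\zeta_r^{(r/p)k}\,w$ changes $\langle g,\cdot\rangle$ by $(r/p)k\,z(g)\equiv 0\bmod r$, so the scalar descends to $G(r,p,n)^*$. Everything else, including the comparison of the inversion signs and the compatibility of conjugation with $\pi$, is then routine; if one prefers, the intertwining can alternatively be verified only on a generating set of $G(r,p,n)$ consisting of the permutations together with a single diagonal generator.
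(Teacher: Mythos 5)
Your proof is correct and is essentially the paper's own argument viewed from the other side: where you construct the explicit surjection $\Phi(C_w)=C_{\bar w}$ and verify it intertwines the actions, the paper instead exhibits the kernel of that map, namely the subspace $K(c)$ spanned by the differences $C_v-C_{\zeta^{r/p}v}$, and checks it is a $G(r,p,n)$-submodule. Both hinge on exactly the same scalar identity $\langle g,\zeta^{r/p}v\rangle=\langle g,v\rangle$ for $g\in G(r,p,n)$ (i.e.\ $p\mid z(g)$), together with the fact that $\inv$ and the conjugation action depend only on $|v|$, so the two proofs coincide in substance.
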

\begin{proof}
Let $K(c)$ be the vector subspace of $\tilde M(c)$ spanned by the elements $C_v-C_{\zeta^{r/p}v}$ as $v$ varies among all elements in $c_1,\ldots,c_s$. Then it is clear that, as a vector space,  $M(c)$ is the quotient of   $\tilde M(c)$ by the vector subspace $K(c)$. Moreover, if $g\in G(r,p,n)$ then
\begin{eqnarray*}
\varrho(g)(C_v-C_{\zeta^{r/p}v})&=&\zeta_r^{\langle g,v\rangle} (-1)^{\inv_v(g)}C_{|g|v|g|^{-1}}-\zeta_r^{\langle g,\zeta^{r/p}v\rangle} (-1)^{\inv_{\zeta^{r/p}v}(g)}C_{|g|\zeta^{r/p}v|g|^{-1}}\\
&=& \zeta_r^{\langle g,v\rangle} (-1)^{\inv_v(g)}(C_{|g|v|g|^{-1}}-C_{\zeta^{r/p}|g|v|g|^{-1}}),
\end{eqnarray*}
since $g\in G(r,p,n)$ implies $\langle g,\zeta^{r/p}v\rangle=\langle g,v\rangle$ .
In particular we deduce that $K(c)$ is also a submodule of $\tilde M(c)$ (as $G(r,p,n)$-modules). The fact that $M(c)\cong \tilde M(c)/K(c)$ is now a direct consequence of the definition of the structures of $G(r,p,n)$-modules.
\end{proof}
We are now ready to complete the proof of the main result of this section.

\vspace{2mm}
\noindent \emph{Proof of Theorem \ref{mainsym}.}
By Theorem \ref{wreath} the $G(r,n)$-module $M(c_1)\oplus\cdots\oplus M(c_s)$ is the sum of all representations $\rho_{\lambda}$ with $\lambda\in \Sh(c_i)$, for some $i\in [s]$ or, equivalently, with $[\lambda]\in \Sh(c)$.
It follows that the restriction $\tilde M(c)$ of this representation to $G(r,p,n)$ has the following decomposition
\begin{equation}\label{tildem}
\tilde M(c)\cong \bigoplus_{\substack{[\lambda]\in \Sh(c):\\m_p(\lambda)=1}}(\rho^0_{[\lambda]})^{\oplus p}\oplus \bigoplus_{\substack{[\lambda]\in \Sh(c):\\m_p(\lambda)=2}}(\rho_{[\lambda]}^0\oplus \rho_{[\lambda]}^1 )^{\oplus p/2}.
\end{equation}
We recall that $M(c)$ is a submodule of a Gelfand model for $G(r,p,n)$ and also a submodule of $\tilde M(c)$ by Lemma \ref{quotient}, and hence, by Equation \eqref{tildem}, we have that $M(c)$ is isomorphic to a subrepresentation of
$$
\bigoplus_{\substack{[\lambda]\in \Sh(c):\\m_p(\lambda)=1}}\rho^0_{[\lambda]}\oplus \bigoplus_{\substack{[\lambda]\in \Sh(c):\\m_p(\lambda)=2}}(\rho_{[\lambda]}^0\oplus \rho_{[\lambda]}^1 ).
$$
Nevertheless, we already know that the split representations $\rho_{[\lambda]}^1$ appear in the antisymmetric submodule and so they can not appear in $M(c)$. For completing the proof  it is now sufficient to observe that, if $c$ and $c'$ are two distinct  $S_n$-conjugacy classes of symmetric absolute involutions in $G(r,p,n)^*$, then the two sets $\Sh(c)$ and $\Sh(c')$ are disjoint.\hfill $\Box$

\vspace{2mm}We can also give an explicit combinatorial description of the set $\Sh(c)$ for a given $S_n$-conjugacy class of symmetric absolute involutions in $G(r,p,n)^*$. For this we let $SC(r,n)=\{(f_0,\ldots,f_{r-1},q_0,\ldots,q_{r-1})\in \mathbb N^{2r}:\,f_0+\cdots+f_{r-1}+2(q_0+\cdots+q_{r-1})=n\}$. The set $SC(r,n)$ has already been used in \cite[\S6]{CaFu} to parametrize the $S_n$-conjugacy classes of absolute involutions in $G(r,n)$. Let $\gamma$ be the permutation of $SC(r,n)$ defined by $$\gamma(f_0,\ldots,f_{r-1},q_0,\ldots,q_{r-1})=(f_{r/p},f_{1+r/p}\ldots,f_{r-1+r/p},q_{r/p},q_{1+r/p}\ldots,q_{r-1+r/p}),$$ where the indices must be intended as elements in $\mathbb Z_r$. We observe that $\gamma$ has order $p$ and so we have an action of the cyclic group $C_p$ generated by $\gamma$ on $SC(r,n)$.  We denote the quotient set by $SC(r,p,n)^*$.
The set $SC(r,p,n)^*$ parametrizes  the $S_n$-conjugacy classes of symmetric absolute involutions in $G(r,p,n)^*$ in the following way. Let $v\in I(r,p,n)^*$ be symmetric and $\tilde v$ be any lift of $v$ in $I(r,n)$. Then the \emph{type} of $v$ is given by
$$
[f_0(\tilde v),\ldots, f_{r-1}(\tilde v),q_0(\tilde v),\ldots, q_{r-1}(\tilde v)]\in SC(r,p,n)^*,
$$
where
\begin{eqnarray*}
f_{i}(\tilde v)&=&|\{j\in[n]:\tilde v_j=j^i\}|\\
q_i(\tilde v)&=&|\{(h,k):1\leq h<k \leq n,\,\tilde v_h=k^i \textrm{ and } \tilde v_k=h^i\}|.
\end{eqnarray*}
It is clear that this is well-defined and we have that two symmetric elements in $I(r,p,n)^*$ are $S_n$-conjugate if and only if they have the same type (see also \cite[\S6]{CaFu} for the special case $p=1$).

By \cite[Proposition 3.1]{CaFu} we can now conclude that if $[\nu]=[f_0,\ldots,f_{r-1},q_0,\ldots,q_{r-1}]\in SC(r,p,n)^*$ and $c=\{v\in I(r,p,n)^*:\textrm{ $v$ is symmetric of type }[\nu]\}$, then
$$
\Sh(c)=\left\{\begin{array}{l}[\lambda^{(0)},\ldots,\lambda^{(r-1)}]\in \Fer(r,p,n)^*:\,\textrm{ for all }i\in[0,r-1],\\|\lambda_i|=f_i+2q_i\textrm{ and }\lambda^{(i)} \textrm{ has exactly } f_i \textrm{ columns of odd length}
\end{array}\right\}.
$$
For example, if we consider $v=[1^0,3^1,2^1,4^1,5^1,7^2,6^2,8^3,10^4,9^4,11^4,12^4,14^5,13^5,]\in G(6,6,14)^*$, then the type of $v$ is $[\nu]=[1,2,0,1,2,0;0,1,1,0,1,1]$. Therefore if $c$ is the $S_n$-conjugacy class of $v$ in $G(6,6,14)^*$ we have that $\Sh(c)$ is given by all elements $[\lambda^{(0)},\ldots,\lambda^{(5)}]\in \Fer(6,1,6,14)$ such that $\lambda^{(0)}$ and $\lambda^{(3)}$ have 1 box (and 1 column of odd length), $\lambda^{(1)}$ and $\lambda^{(4)}$ have 4 boxes and 2 columns of odd length,  $\lambda^{(2)}$ and $\lambda^{(5)}$ have 2 boxes and no columns of odd length, i.e.
\setlength{\unitlength}{6pt}
$$\Sh(c)=\left\{
\left[\begin{picture}(14,2.5)(0,1)
\put(0,3){\line(1,0){1}}\put(2,3){\line(1,0){3}}\put(6,3){\line(1,0){1}}\put(8,3){\line(1,0){1}}\put(10,3){\line(1,0){2}}\put(13,3){\line(1,0){1}}\put(0,2){\line(1,0){1}}\put(2,2){\line(1,0){3}}\put(6,2){\line(1,0){1}}\put(8,2){\line(1,0){1}}\put(10,2){\line(1,0){2}}\put(13,2){\line(1,0){1}}\put(2,1){\line(1,0){1}}\put(6,1){\line(1,0){1}}\put(10,1){\line(1,0){1}}\put(13,1){\line(1,0){1}}\put(10,0){\line(1,0){1}}\put(0,2){\line(0,1){1}}\put(1,2){\line(0,1){1}}\put(2,1){\line(0,1){2}}\put(3,1){\line(0,1){2}}\put(4,2){\line(0,1){1}}\put(5,2){\line(0,1){1}}\put(6,1){\line(0,1){2}}\put(7,1){\line(0,1){2}}\put(8,2){\line(0,1){1}}\put(9,2){\line(0,1){1}}\put(10,0){\line(0,1){3}}\put(11,0){\line(0,1){3}}\put(12,2){\line(0,1){1}}\put(13,1){\line(0,1){2}}\put(14,1){\line(0,1){2}}\put(1,2){,}\put(5,2){,}\put(7,2){,}\put(9,2){,}\put(12,2){,}\end{picture}\right],
\left[\begin{picture}(13,2.5)(0,1)\put(0,3){\line(1,0){1}}\put(2,3){\line(1,0){2}}\put(5,3){\line(1,0){1}}\put(7,3){\line(1,0){1}}\put(9,3){\line(1,0){2}}\put(12,3){\line(1,0){1}}\put(0,2){\line(1,0){1}}\put(2,2){\line(1,0){2}}\put(5,2){\line(1,0){1}}\put(7,2){\line(1,0){1}}\put(9,2){\line(1,0){2}}\put(12,2){\line(1,0){1}}\put(2,1){\line(1,0){1}}\put(5,1){\line(1,0){1}}\put(9,1){\line(1,0){1}}\put(12,1){\line(1,0){1}}\put(2,0){\line(1,0){1}}\put(9,0){\line(1,0){1}}\put(0,2){\line(0,1){1}}\put(1,2){\line(0,1){1}}\put(2,0){\line(0,1){3}}\put(3,0){\line(0,1){3}}\put(4,2){\line(0,1){1}}\put(5,1){\line(0,1){2}}\put(6,1){\line(0,1){2}}\put(7,2){\line(0,1){1}}\put(8,2){\line(0,1){1}}\put(9,0){\line(0,1){3}}\put(10,0){\line(0,1){3}}\put(11,2){\line(0,1){1}}\put(12,1){\line(0,1){2}}\put(13,1){\line(0,1){2}}\put(1,2){,}\put(4,2){,}\put(6,2){,}\put(8,2){,}\put(11,2){,}\end{picture}\right],
\left[\begin{picture}(15,2.5)(0,1)\put(0,3){\line(1,0){1}}\put(2,3){\line(1,0){3}}\put(6,3){\line(1,0){1}}\put(8,3){\line(1,0){1}}\put(10,3){\line(1,0){3}}\put(14,3){\line(1,0){1}}\put(0,2){\line(1,0){1}}\put(2,2){\line(1,0){3}}\put(6,2){\line(1,0){1}}\put(8,2){\line(1,0){1}}\put(10,2){\line(1,0){3}}\put(14,2){\line(1,0){1}}\put(2,1){\line(1,0){1}}\put(6,1){\line(1,0){1}}\put(10,1){\line(1,0){1}}\put(14,1){\line(1,0){1}}\put(0,2){\line(0,1){1}}\put(1,2){\line(0,1){1}}\put(2,1){\line(0,1){2}}\put(3,1){\line(0,1){2}}\put(4,2){\line(0,1){1}}\put(5,2){\line(0,1){1}}\put(6,1){\line(0,1){2}}\put(7,1){\line(0,1){2}}\put(8,2){\line(0,1){1}}\put(9,2){\line(0,1){1}}\put(13,2){\line(0,1){1}}\put(10,1){\line(0,1){2}}\put(11,1){\line(0,1){2}}\put(12,2){\line(0,1){1}}\put(14,1){\line(0,1){2}}\put(15,1){\line(0,1){2}}\put(1,2){,}\put(5,2){,}\put(7,2){,}\put(9,2){,}\put(13,2){,}\end{picture}\right]
\right\}
$$

Therefore we have the following decomposition of $M(c)$ into irreducible representations
\setlength{\unitlength}{4pt}
$$M(c)\cong
 \rho_{\left[\begin{picture}(14,2)(0,1.3)
\put(0,3){\line(1,0){1}}\put(2,3){\line(1,0){3}}\put(6,3){\line(1,0){1}}\put(8,3){\line(1,0){1}}\put(10,3){\line(1,0){2}}\put(13,3){\line(1,0){1}}\put(0,2){\line(1,0){1}}\put(2,2){\line(1,0){3}}\put(6,2){\line(1,0){1}}\put(8,2){\line(1,0){1}}\put(10,2){\line(1,0){2}}\put(13,2){\line(1,0){1}}\put(2,1){\line(1,0){1}}\put(6,1){\line(1,0){1}}\put(10,1){\line(1,0){1}}\put(13,1){\line(1,0){1}}\put(10,0){\line(1,0){1}}\put(0,2){\line(0,1){1}}\put(1,2){\line(0,1){1}}\put(2,1){\line(0,1){2}}\put(3,1){\line(0,1){2}}\put(4,2){\line(0,1){1}}\put(5,2){\line(0,1){1}}\put(6,1){\line(0,1){2}}\put(7,1){\line(0,1){2}}\put(8,2){\line(0,1){1}}\put(9,2){\line(0,1){1}}\put(10,0){\line(0,1){3}}\put(11,0){\line(0,1){3}}\put(12,2){\line(0,1){1}}\put(13,1){\line(0,1){2}}\put(14,1){\line(0,1){2}}\put(1,2){,}\put(5,2){,}\put(7,2){,}\put(9,2){,}\put(12,2){,}\end{picture}\right]}
\oplus
\rho^0_{\left[\begin{picture}(13,2)(0,1.3)\put(0,3){\line(1,0){1}}\put(2,3){\line(1,0){2}}\put(5,3){\line(1,0){1}}\put(7,3){\line(1,0){1}}\put(9,3){\line(1,0){2}}\put(12,3){\line(1,0){1}}\put(0,2){\line(1,0){1}}\put(2,2){\line(1,0){2}}\put(5,2){\line(1,0){1}}\put(7,2){\line(1,0){1}}\put(9,2){\line(1,0){2}}\put(12,2){\line(1,0){1}}\put(2,1){\line(1,0){1}}\put(5,1){\line(1,0){1}}\put(9,1){\line(1,0){1}}\put(12,1){\line(1,0){1}}\put(2,0){\line(1,0){1}}\put(9,0){\line(1,0){1}}\put(0,2){\line(0,1){1}}\put(1,2){\line(0,1){1}}\put(2,0){\line(0,1){3}}\put(3,0){\line(0,1){3}}\put(4,2){\line(0,1){1}}\put(5,1){\line(0,1){2}}\put(6,1){\line(0,1){2}}\put(7,2){\line(0,1){1}}\put(8,2){\line(0,1){1}}\put(9,0){\line(0,1){3}}\put(10,0){\line(0,1){3}}\put(11,2){\line(0,1){1}}\put(12,1){\line(0,1){2}}\put(13,1){\line(0,1){2}}\put(1,2){,}\put(4,2){,}\put(6,2){,}\put(8,2){,}\put(11,2){,}\end{picture}\right]}
\oplus
\rho^0_{\left[\begin{picture}(15,2)(0,1.3)\put(0,3){\line(1,0){1}}\put(2,3){\line(1,0){3}}\put(6,3){\line(1,0){1}}\put(8,3){\line(1,0){1}}\put(10,3){\line(1,0){3}}\put(14,3){\line(1,0){1}}\put(0,2){\line(1,0){1}}\put(2,2){\line(1,0){3}}\put(6,2){\line(1,0){1}}\put(8,2){\line(1,0){1}}\put(10,2){\line(1,0){3}}\put(14,2){\line(1,0){1}}\put(2,1){\line(1,0){1}}\put(6,1){\line(1,0){1}}\put(10,1){\line(1,0){1}}\put(14,1){\line(1,0){1}}\put(0,2){\line(0,1){1}}\put(1,2){\line(0,1){1}}\put(2,1){\line(0,1){2}}\put(3,1){\line(0,1){2}}\put(4,2){\line(0,1){1}}\put(5,2){\line(0,1){1}}\put(6,1){\line(0,1){2}}\put(7,1){\line(0,1){2}}\put(8,2){\line(0,1){1}}\put(9,2){\line(0,1){1}}\put(13,2){\line(0,1){1}}\put(10,1){\line(0,1){2}}\put(11,1){\line(0,1){2}}\put(12,2){\line(0,1){1}}\put(14,1){\line(0,1){2}}\put(15,1){\line(0,1){2}}\put(1,2){,}\put(5,2){,}\put(7,2){,}\put(9,2){,}\put(13,2){,}\end{picture}\right]}
$$

\section{A final survey and a further generalization}\label{last}

The aim of this section is to provide a statement containing all the information we collected about the refinement of the model in the rest of the paper and holding in full generality for all the groups $G(r,p,n)$ with $\GCD(p,n)=1,2$.
Furthermore, we use this occasion to observe that the above results apply, in fact, to  all the projective groups $G(r,p,q,n)$ with $\GCD(p,n)=1,2.$

If $v$ is an absolute involution in $G(r,q,p,n)$ and $c$ is any (symmetric or antisymmetric) $S_n$-conjugacy class of absolute involutions in $G(r,q,p,n)$, we define $\Sh(v)\in \Fer(r,q,p,n)$ and $\Sh(c)\subset \Fer(r,q,p,n)$ as in Section \ref{Sym}. Moreover, we let $\iota(c)=0$ if the elements of $c$ are symmetric and $\iota(c)=1$ if the elements of $c$ are antisymmetric.

\begin{thm} \label{verymain}Let $G=G(r,p,q,n)$ with $\GCD(p,n)=1,2$, and let $(M(r,q,p,n), \varrho)$ be its Gelfand model defined in Theorem \ref{modello}. Given an $S_n$-conjugacy class $c$ of absolute involutions in $G^*$, let $M(c)={\rm Span}\{C_v:\,v\in c\}$  so that $M(r,q,p,n)$ naturally splits as a $G$-module into the direct sum
$$M(r,q,p,n)=\bigoplus_c M(c).$$
Then the submodule $M(c)$ has the following decomposition into irreducibles
$$
M(c)\cong \bigoplus_{[\lambda]\in \Sh(c)}\rho_{[\lambda]}^{\iota(c)}.
$$
\end{thm}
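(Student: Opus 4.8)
The plan is to reduce everything to the case $q=1$, which is covered by the results already proved, and to obtain the general case by a short inflation argument. The global splitting $M(r,q,p,n)=\bigoplus_c M(c)$ is immediate from the defining formula \eqref{action}: each $\varrho(g)$ carries a basis vector $C_v$ to a scalar multiple of $C_{|g|v|g|^{-1}}$, and $|g|v|g|^{-1}$ lies in the same $S_n$-conjugacy class as $v$, so every $M(c)$ is $\varrho$-invariant. It therefore suffices to identify the irreducible content of a single summand $M(c)$, and we may treat separately the case where $c$ consists of symmetric involutions ($\iota(c)=0$) and the case where it consists of antisymmetric ones ($\iota(c)=1$, which occurs only when $\GCD(p,n)=2$).

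Assume first $q=1$, so that $G=G(r,p,n)$ and $G^*=G(r,1,p,n)$. If $\iota(c)=0$, the decomposition $M(c)\cong\bigoplus_{[\lambda]\in\Sh(c)}\rho^0_{[\lambda]}$ is exactly Theorem \ref{mainsym}. If $\iota(c)=1$ then $c=c^1_{[\tau]}$ for a unique type $[\tau]\in AC(r,p,n)^*$, and Theorem \ref{mainanticlass} supplies the decomposition; to read it in the form of the statement one only has to observe that, by Lemma \ref{anrs}, the projective Robinson--Schensted shape of an antisymmetric involution is forced to be of the repeated form $[\mu,\mu]$, so that the multipartitions $[\lambda^{(0)},\ldots,\lambda^{(r')},\lambda^{(0)},\ldots,\lambda^{(r')}]$ with $|\lambda^{(i)}|=t_i$ appearing there are precisely the elements of $\Sh(c^1_{[\tau]})$. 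This disposes of $q=1$.

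For general $q$ I would use that $G(r,p,q,n)=G(r,p,n)/C_q$, so a $G(r,p,q,n)$-module is nothing but a $G(r,p,n)$-module on which $C_q$ acts trivially. Dually, $G(r,q,p,n)=G(r,q,n)/C_p$ is a normal subgroup of $G(r,1,p,n)=G(r,n)/C_p$, since color is an $S_n$-class function and $C_p\subseteq G(r,q,n)$; consequently $I(r,q,p,n)$ is an $S_n$-stable subset of $I(r,1,p,n)$. As $S_n$ sits inside both groups in the same way, the $S_n$-class $c$ of an involution $v_0\in G^*$ coincides, as a set of involutions, with the $S_n$-class $\hat c$ that it generates in $G(r,1,p,n)$. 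Comparing \eqref{action} for the two models — and using that a lift of $v_0$ to $G(r,q,n)$ is in particular a lift to $G(r,n)$, so that the scalars $\langle g,v_0\rangle$, $a(g,v_0)$ and $\inv_{v_0}(g)$ are unchanged — shows that the inflation to $G(r,p,n)$ of $M(r,q,p,n)$ is literally the $G(r,p,n)$-submodule $\mathrm{Span}\{C_v:v\in I(r,q,p,n)\}$ of the Gelfand model $M(r,1,p,n)$, on which $C_q$ acts trivially, and that under this identification $M(c)$ becomes the summand $M(\hat c)$. Applying the $q=1$ result to $M(\hat c)$ and noting that every shape occurring, being the projective Robinson--Schensted shape of an element of $G(r,q,p,n)$, already lies in $\Fer(r,q,p,n)$ (equivalently, the corresponding representation is trivial on $C_q$), each $\rho^{\iota(c)}_{[\lambda]}$ descends to the corresponding irreducible of $G(r,p,q,n)$, and we obtain $M(c)\cong\bigoplus_{[\lambda]\in\Sh(c)}\rho^{\iota(c)}_{[\lambda]}$.

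The genuine mathematical difficulty is concentrated entirely in the case $q=1$, that is, in Theorems \ref{mainsym} and \ref{mainanticlass}, which I am invoking; the passage to arbitrary admissible $q$ is formal once the subgroup/submodule dictionary above is in place. The one point that must be handled carefully is precisely this dictionary: one has to verify that the action \eqref{action} attached to $G(r,p,q,n)$ is the restriction of the one attached to $G(r,p,n)$ under the inclusion $I(r,q,p,n)\subseteq I(r,1,p,n)$ (so that $C_q$ indeed acts trivially), and that the projective Robinson--Schensted shapes of the involutions in $c$ land in $\Fer(r,q,p,n)$ rather than merely in $\Fer(r,1,p,n)$. This mirrors, and slightly generalizes, the restriction/inflation step used at the very end of the proof of Theorem \ref{mainanticlass}, where irreducibles indexed ``in the same way'' are transported along a scalar quotient.
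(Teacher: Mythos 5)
Your proposal is correct and follows essentially the same route as the paper: the case $q=1$ is disposed of by Theorems \ref{mainsym} and \ref{mainanticlass} (with the observation matching the type of an antisymmetric class to the repeated shapes $[\mu,\mu]$), and general $q$ is reduced to $q=1$ via the compatibility of the Gelfand models with the projection $G(r,p,1,n)\rightarrow G(r,p,q,n)$ and the fact that an $S_n$-class of absolute involutions in $G(r,q,p,n)$ is also one in $G(r,1,p,n)$. Your inflation/restriction dictionary merely spells out in more detail the compatibility that the paper asserts in one sentence.
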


\begin{proof} We have already established this result if $q=1$. In fact, if $\iota(c)=0$ this is the content of Theorem \ref{mainsym}, and, if $\iota(c)=1$, the result follows directly from Theorem \ref{mainanticlass} with the further observation that if $v$ is an antisymmetric element of type $[t_0,\ldots,t_{r'-1}]$, then $\Sh(v)=[\lambda^{(0)},\ldots,\lambda^{(r-1)}]$, with $|\lambda^{(i)}|= |\lambda^{(i+r')}|=t_i$. 

If $q\neq 1$ the result is straightforward since an $S_n$-conjugacy class of absolute involutions in $G(r,q,p,n)$ is also an $S_n$-conjugacy class of absolute involutions in $G(r,1,p,n)$ and  the definition of the Gelfand models for $G(r,p,q,n)$ and $G(r,p,1,n)$ are compatible with the projection $G(r,p,1,n)\rightarrow G(r,p,q,n)$. 
\end{proof}


\begin{thebibliography}{1}

 \bibitem{APR}{R. Adin, A. Postnikov and Y. Roichman}, A Gelfand model for wreath products, Israel J. Math. 179 (2010), 381--402.
 \bibitem{B} R.W. Baddeley, Models and involution models for wreath products and certain Weyl groups. J. London Math. Soc. 44 (1991), 55--74.
 \bibitem{BGG}I.N.Bernstein, I.M. Gelfand and S.I. Gelfand, Models of representations of compact Lie groups (Russian), Funkcional. Anal. i Prilozen. 9 (1975), 61--62.
\bibitem{Ca1}{F. Caselli}, Projective reflection groups, Israel J. Math., in press, {\tt arXiv:0902.0684}.
\bibitem{Ca2}{F. Caselli}, Involutory reflection groups and their models, J. Algebra 24 (2010), 370--393.
\bibitem{CaFu}{F. Caselli and R. Fulci}, Refined Gelfand models for wreath products, Europ. J. Combin. 32 (2011), 198--216.
 \bibitem{IRS}N.F.J. Inglis, R.W. Richardson and J. Saxl, An explicit model for the complex representations of $S_n$, Arch. Math. (Basel) 54 (1990), 258--259.
 \bibitem{K2}A.A. Klyachko, Models for complex representations of groups $\GL(n.q)$ (Russian), Mat. Sb. 120 (1983), 371--386.
 \bibitem{Pf}G. Pfeiffer, Character Tables of Weyl Groups in GAP, Bayreuth. Math. Schr. 47 (1994), 165--222.
 \bibitem{ST}{G.C. Shephard and J. A. Todd}, Finite unitary reflection groups,  Canadian J. Math.  6  (1954), 274--304.
\bibitem{Stembridge} J.R. Stembridge, \emph{On the eigenvalues of representations of reflection groups and wreath products}, Pacific J. Math. 140 (1989) n.2, 353-396
\bibitem{Sta}{R.P. Stanley}, Enumerative combinatorics, vol. 2, Cambridge Studies in Advanced Mathematics 62, Cambridge University Press, Cambridge, 1999.
\bibitem{SW}{D.W. Stanton and D.E. White}, A Schensted algorithm for rim hook tableaux, J. Combin. Theory Ser. A 40 (1985), 211--247.

\end{thebibliography}
\end{document}